\theoremstyle{plain}
\newtheorem{proposition}{Proposition}[section]
\newtheorem{theorem}[proposition]{Theorem}
\newtheorem{lemma}[proposition]{Lemma}
\newtheorem{corollary}[proposition]{Corollary}
\theoremstyle{definition}
\newtheorem{example}[proposition]{Example}
\newtheorem{definition}[proposition]{Definition}
\newtheorem{observation}[proposition]{Observation}
\theoremstyle{remark}
\newtheorem{remark}[proposition]{Remark}
\newtheorem{question}[proposition]{Question}
\newtheorem*{example*}{Example}
\DeclareMathOperator{\Aut}{Aut}
\DeclareMathOperator{\diam}{diam}
\DeclareMathOperator{\PGL}{PGL}
\DeclareMathOperator{\Spanset}{Span}
\DeclareMathOperator{\Haus}{Haus} 
\DeclareMathOperator{\CAT}{CAT}
\DeclareMathOperator{\simplexcc}{max}
\DeclareMathOperator{\Stab}{Stab}
\DeclareMathOperator{\cdim}{c-dim}
\DeclareMathOperator{\relint}{relint}
\DeclareMathOperator{\dist}{d}
\DeclareMathOperator{\hil}{d_{\Omega}}
\DeclareMathOperator{\partiali}{\partial_i}
\DeclareMathOperator{\Bc}{\mathcal{B}}
\DeclareMathOperator{\Cc}{\mathcal{C}}
\DeclareMathOperator{\Kc}{\mathcal{K}}
\DeclareMathOperator{\Lc}{\mathcal{L}}
\DeclareMathOperator{\Nc}{\mathcal{N}}
\DeclareMathOperator{\Sc}{\mathcal{S}}
\DeclareMathOperator{\Uc}{\mathcal{U}}
\DeclareMathOperator{\Hb}{\mathbb{H}}
\DeclareMathOperator{\Nb}{\mathbb{N}}
\DeclareMathOperator{\Pb}{\mathbb{P}}
\DeclareMathOperator{\Rb}{\mathbb{R}}
\DeclareMathOperator{\Zb}{\mathbb{Z}}
\DeclareMathOperator{\Qb}{\mathbb{Q}}
\newcommand{\abs}[1]{\left|#1\right|}
\begin{document}

\title{Convex co-compact groups with one dimensional boundary faces}
\author{Mitul Islam}\curraddr{Max Planck Institute for Mathematics in the Sciences, 04103 Leipzig}\address{Department of Mathematics, University of Michigan, Ann Arbor, MI 48109.}
\email{mitulmi@umich.edu}
\author{Andrew Zimmer}\address{Department of Mathematics, University of Wisconsin-Madison, Madison, WI 53706.}
\email{amzimmer2@wisc.edu}
\date{\today}
\keywords{}
\subjclass[2010]{}

\begin{abstract} 
In this paper we consider convex co-compact subgroups of the projective linear group. We prove that such a group is relatively hyperbolic with respect to a collection of virtually Abelian subgroups of rank two if and only if each open face in the ideal boundary has dimension at most one. We also introduce the ``coarse Hilbert dimension'' of a subset of a convex set and use it to characterize when a naive convex co-compact subgroup is word hyperbolic or relatively hyperbolic with respect to a collection of virtually Abelian subgroups of rank two.
\end{abstract}

\maketitle

\section{Introduction}

In this paper we consider the class of (naive) convex co-compact subgroups of $\PGL_d(\Rb)$, as defined in~\cite{DGF2017}. In earlier work~\cite{IZ2019b} we proved a general, geometric characterization of when such a group is relatively hyperbolic with respect to a (possibly empty) collection of virtually Abelian subgroups of rank at least two. In this paper, we specialize to the case of virtually Abelian subgroups of rank exactly two and provide a very simple (to state) characterization in terms of the ideal boundary of the associated convex hull. There are many examples of such convex co-compact groups coming from Coxeter groups and also from deformations of hyperbolic structures on certain cusped 3-manifolds followed by a doubling construction (see \cite{B2006}, \cite{BDL2018}, and \cite[Section 12.2]{DGF2017}).

To state our results precisely we need to introduce some terminology. Given a properly convex domain $\Omega \subset \Pb(\Rb^d)$, the \emph{automorphism group of $\Omega$} is defined to be
\begin{align*}
\Aut(\Omega) : = \{ g \in \PGL_d(\Rb) : g \Omega = \Omega\}.
\end{align*}
Then for a subgroup $\Gamma \subset \Aut(\Omega)$, the  \emph{full orbital limit set of $\Gamma$ in $\Omega$} is defined to be
\begin{equation*}
\Lc_{\Omega}(\Gamma):= \bigcup_{p \in \Omega} \Big( \overline{\Gamma \cdot p} \setminus \Gamma \cdot p \Big).
\end{equation*}
Next, let $\Cc_\Omega(\Gamma)$ denote the convex hull of $\Lc_\Omega(\Gamma)$ in $\Omega$. Then, convex co-compact subgroups can be defined as follows. 

\begin{definition}\cite[Definition 1.10]{DGF2017} \ \begin{enumerate}
\item Suppose $\Omega \subset \Pb(\Rb^d)$ is a properly convex domain, then an infinite discrete subgroup $\Gamma \subset \Aut(\Omega)$ is called \emph{convex co-compact} when $\Cc_\Omega(\Gamma)$ is non-empty and $\Gamma$ acts co-compactly on $\Cc_\Omega(\Gamma)$. 
\item A subgroup $\Gamma \subset \PGL_d(\Rb)$ is \emph{convex co-compact} if there exists a properly convex domain  $\Omega \subset \Pb(\Rb^d)$ where $\Gamma \subset \Aut(\Omega)$ is a convex co-compact subgroup. 
\end{enumerate}
\end{definition}

When $\Gamma$ is word hyperbolic there is a close connection between this class of discrete groups in $\PGL_d(\Rb)$ and Anosov representations, see~\cite{DGF2017} for details and~\cite{DGF2018,Z2017} for related results. Further, by adapting an argument of Benoist~\cite{B2004}, Danciger--Gu{\'e}ritaud--Kassel established a characterization of hyperbolicity in terms of the geometry of $\Cc_\Omega(\Gamma)$. To state their result we need some more definitions.

\begin{definition} A subset $S \subset \Pb(\Rb^d)$ is a \emph{simplex} if there exist $g \in \PGL_d(\Rb)$ and $0 \leq k \leq d-1$ such that 
\begin{align*}
gS = \left\{ [x_1:\dots:x_{k+1}:0:\dots:0] \in \Pb(\Rb^d): x_1>0,\dots, x_{k+1}>0  \right\}.
\end{align*}  
Then the \emph{dimension of $S$}, denoted $\dim(S)$, is $k$ (notice that $S$ is homeomorphic to $\Rb^{k}$) and the $(k+1)$ points 
\begin{align*}
g^{-1}\{ [1:0:\dots:0], [0:1:0:\dots:0], \dots, [0:\dots:0:1:0:\dots:0] \} \subset \partial S
\end{align*}
are the \emph{vertices} of $S$. 
\end{definition}

\begin{definition}\label{defn:properly-embedded}
 Suppose $A \subset B \subset \Pb(\Rb^d)$. Then $A$ is \emph{properly embedded in $B$} if the inclusion map $A \hookrightarrow B$ is a proper map (relative to the subspace topology). 
\end{definition}

Finally, given a properly convex domain $\Omega \subset \Pb(\Rb^d)$ let $\dist_\Omega$ denote the Hilbert metric on $\Omega$ (see Section~\ref{subsec:Hilbert_metric} for the definition). 

\begin{theorem}[{Danciger--Gu{\'e}ritaud--Kassel~\cite[Theorem 1.15]{DGF2017}}]\label{thm:char_of_hyp_cc}  Suppose $\Omega \subset \Pb(\Rb^d)$ is a properly convex domain, $\Gamma \subset \Aut(\Omega)$ is convex co-compact, and $\Cc : = \Cc_\Omega(\Gamma)$. Then the following are equivalent: 
\begin{enumerate}
\item every point in $\overline{\Cc} \cap \partial \Omega$ is an extreme point of $\Omega$, 
\item $\Cc$ does not contain a properly embedded simplex with dimension at least two, 
\item $(\Cc, \dist_\Omega)$ is Gromov hyperbolic, 
\item $\Gamma$ is word hyperbolic.
\end{enumerate}
\end{theorem}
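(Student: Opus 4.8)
The plan is to establish the cycle $(1) \Rightarrow (2) \Rightarrow (3) \Rightarrow (1)$ and to append the equivalence $(3) \Leftrightarrow (4)$ separately, since that step is purely coarse-geometric. For $(3) \Leftrightarrow (4)$: the group $\Lambda$ is discrete and acts properly on $\Omega$, hence properly discontinuously on the proper geodesic metric space $(\Cc, \dist_\Omega)$, and by hypothesis it acts cocompactly; the Milnor--\v{S}varc lemma then yields a quasi-isometry $\Lambda \simeq (\Cc, \dist_\Omega)$. Since Gromov hyperbolicity is a quasi-isometry invariant, $(3)$ and $(4)$ are equivalent, and I will not refer to $(4)$ again.

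For $(1) \Rightarrow (2)$ I would argue by contraposition. Suppose $\Cc$ contains a properly embedded simplex $S$ of dimension at least two. Proper embedding forces $\partial S \subset \partial\Omega$, since a sequence in $S$ tending to a point of the relative boundary $\partial S$ must exit every compact subset of $\Omega$ and hence cannot converge inside $\Omega$. Because $\dim S \geq 2$, the closed simplex $\overline{S}$ has a one-dimensional edge $e$ joining two of its vertices, and $e \subset \partial S \subset \partial\Omega$; every interior point of $e$ is then a non-extreme point of $\Omega$. As $e \subset \overline{S} \subset \overline{\Cc}$, this yields a non-extreme point of $\overline{\Cc} \cap \partial\Omega$, contradicting $(1)$.

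The analytic core is $(2) \Rightarrow (3)$, which I would prove by contraposition, adapting Benoist's argument \cite{B2004}. Assuming $(\Cc, \dist_\Omega)$ is not Gromov hyperbolic, the thin-triangles condition fails at every scale, producing geodesic triangles in $\Cc$ that are arbitrarily far from being thin. Using cocompactness I would translate these triangles by suitable elements $\gamma_n \in \Lambda$ so that a chosen basepoint stays in a fixed compact subset of $\Cc$, and then extract a limit of the resulting configurations. The failure of thinness survives in the limit and forces a nondegenerate projective segment in $\partial\Omega$ spanned by a flat region of $(\Omega, \dist_\Omega)$; I would then promote this to a properly embedded simplex of dimension at least two in $\Cc$, using that the Hilbert metric on an open $k$-simplex is isometric to a $k$-dimensional normed space (so that a genuine $2$-simplex is exactly the flat obstruction) and that cocompactness makes the extracted simplex properly embedded. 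To close the cycle, $(3) \Rightarrow (1)$ would again go by contraposition: a non-extreme point $x \in \overline{\Cc} \cap \partial\Omega$ is interior to a segment $[a,b] \subset \partial\Omega$, and translating small neighborhoods of $x$ into a fixed compact part of $\Cc$ and passing to a limit produces a flat region of $(\Omega, \dist_\Omega)$ associated to this boundary segment, hence geodesic triangles that are not uniformly thin, contradicting $(3)$.

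The main obstacle is the extraction step inside $(2) \Rightarrow (3)$, namely converting the bare failure of Gromov hyperbolicity into an honest properly embedded simplex of dimension at least two. Three points demand care: (i) arranging the quantitative failure of the thin-triangles condition so that, after translating by group elements, the relevant data survives the passage to a limit; (ii) certifying that the limiting flat is a genuine projective simplex of the correct dimension rather than merely a quasi-flat; and (iii) verifying that this simplex is properly embedded in $\Cc$, which is precisely where cocompactness of the $\Lambda$-action is indispensable. By contrast, the coarse equivalence $(3) \Leftrightarrow (4)$ and the implication $(1) \Rightarrow (2)$ are routine.
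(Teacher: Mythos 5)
First, a point of reference: the paper never proves Theorem \ref{thm:char_of_hyp_cc} itself --- it is quoted from \cite{DGF2017} --- but the proof of its naive convex co-compact generalization, Theorem \ref{thm:ncc_GH_2} in Section \ref{sec:pf_of_GH_ncc}, specializes to this statement (in the convex co-compact case a boundary face meeting $\overline{\Cc}$ lies entirely in $\overline{\Cc}$, so ``every point of $\overline{\Cc} \cap \partial\Omega$ is extreme'' is equivalent to the coarse-dimension-zero condition), and that is the natural benchmark. Measured against it, your $(3) \Leftrightarrow (4)$ via Milnor--\v{S}varc, your $(1) \Rightarrow (2)$ (an edge of a properly embedded simplex of dimension at least two lies in $\partial\Omega \cap \overline{\Cc}$ and consists of non-extreme points), and your direct sketch of $(3) \Rightarrow (1)$ (a segment in $\partial\Omega \cap \overline{\Cc}$ forces uniformly fat triangles) are all sound and consistent with the paper's corresponding steps.

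The genuine gap is in $(2) \Rightarrow (3)$, precisely at the extraction step you flag. The limit of fat triangles does \emph{not} produce a flat region or a simplex: after translating by $\gamma_n$ and passing to limits (as in the proof of the lemma ``$(2) \Rightarrow (4)$'' following Proposition \ref{prop:GH_suff}), what survives is a configuration $u \in (a,b) \cap \Cc$ with $[a,c] \cup [c,b] \subset \partiali \Cc$ --- a triangle with only \emph{two} sides in the ideal boundary and the third side crossing $\Omega$, whose convex hull is therefore not properly embedded. All this first limit can certify is a non-extreme point of $\overline{\Cc} \cap \partial\Omega$, i.e.\ a boundary face of positive dimension. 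Converting that into an honest properly embedded simplex of dimension at least two requires a second, structurally different limiting argument, which is Lemma \ref{lem:existence_of_2_simplices} in the paper: since the face $F_\Omega(x)$ lies entirely in $\overline{\Cc}$, one picks $a_n, b_n \in F_\Omega(x_n)$ with $\dist_{F_\Omega(x_n)}(a_n,b_n) \geq n$ and midpoint $x_n$, uses the semicontinuity estimate of Proposition \ref{prop:dist_est_and_faces} to find $y_n \in [p_0,x_n)$ with $\dist_\Omega\big(y_n, [p_0,a_n) \cup [p_0,b_n)\big) > n/2$, translates $y_n$ into a compact set by cocompactness, and passes to the limit; the side $[a,b]$ is ideal automatically because $[a_n,b_n] \subset \partial\Omega$ throughout, and the other two sides are ideal by the fatness estimate, so all three sides land in $\partiali\Cc$ and the limit triangle is properly embedded and meets $\Cc$. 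Nothing in your proposal supplies this mechanism --- in particular nothing explains how the third side of any limiting triangle gets pushed into the boundary --- and your appeal to the flatness of the Hilbert metric on a simplex cannot do this work: that fact shows simplices obstruct hyperbolicity (the easy direction $(3) \Rightarrow (2)$), not that the objects your limits produce are simplices. Your difficulty (iii) is also slightly misplaced: proper embeddedness of the limit simplex is automatic once all three sides are ideal; cocompactness is needed earlier, to keep the cone points $y_n$ in a compact set so the limit triangle is nondegenerate.
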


\begin{remark}  In the special case when  $\Gamma$ acts co-compactly on $\Omega$, Theorem~\ref{thm:char_of_hyp_cc} is due to Benoist~\cite{B2004} and the proof in~\cite{DGF2017} follows similar arguments. 
\end{remark}

In this paper we establish a similar theorem for groups which are relatively hyperbolic with respect to a collection of virtually Abelian subgroups of rank two. To state our main result precisely, we introduce the following notation: given a properly convex domain $\Omega \subset \Pb(\Rb^d)$ and $x \in \overline{\Omega}$ let $F_\Omega(x)$ denote the \emph{(open) face} of $x$, that is 
\begin{align*}
F_\Omega(x) = \{ x\} \cup \left\{ y \in \overline{\Omega} : \text{ $\exists$ an open line segment in $\overline{\Omega}$ containing $x$ and $y$} \right\}.
\end{align*}
When $x \in \partial \Omega$, we say that $F_\Omega(x)$ is a \emph{boundary face} of $\partial \Omega$. Notice that $F_\Omega(x) = \Omega$ when $x \in \Omega$ and $F_\Omega(x) = \{x\}$ when $x \in \partial \Omega$ is an extreme point.  

\begin{theorem}\label{thm:cc_one_d_faces}(see Section~\ref{sec:pf_of_ond_d_faces_cc}) Suppose $\Omega \subset \Pb(\Rb^d)$ is a properly convex domain, $\Gamma \subset \Aut(\Omega)$ is convex co-compact, and $\Cc : = \Cc_\Omega(\Gamma)$. Then the following are equivalent: 
\begin{enumerate}
\item every boundary face of $\Omega$ which intersects $\overline{\Cc}$ has dimension at most one,
\item the collection of all properly embedded simplices in $\Cc$ with dimension two is closed and discrete in the local Hausdorff convergence topology induced by $\dist_\Omega$, 
\item $(\Cc,\dist_\Omega)$ is relatively hyperbolic with respect to a (possibly empty) collection of two dimensional properly embedded simplices, 
\item $\Gamma$ is a relatively hyperbolic group with respect to a (possibly empty) collection of virtually Abelian subgroups of rank two.
\end{enumerate}
\end{theorem}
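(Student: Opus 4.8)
The plan is to take condition (2) as the hub and establish (1) $\Leftrightarrow$ (2), (2) $\Leftrightarrow$ (3), and (3) $\Leftrightarrow$ (4), modeling the argument on the hyperbolic case of Theorem~\ref{thm:char_of_hyp_cc} and invoking the general ``rank at least two'' relative-hyperbolicity characterization of \cite{IZ2019b}. The geometrically new ingredient, and the place where the hypothesis that the dimension (resp.\ rank) is \emph{exactly} two really enters, is the equivalence (1) $\Leftrightarrow$ (2); once the relevant properly embedded simplices are known to be two dimensional and isolated, the remaining equivalences are a specialization of the machinery in \cite{IZ2019b} together with a stabilizer computation.

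First I would set up the dictionary between boundary faces and properly embedded simplices. If $S \subset \Cc$ is a properly embedded simplex of dimension $k$ with vertices $v_0,\dots,v_k \in \partial\Omega$, then (since $\Cc$ is closed in $\Omega$) properness of the inclusion forces the frontier $\overline{S}\setminus S$ to lie in $\partial\Omega$, so every proper face of the closed simplex $\overline{S}$ is contained in $\partial\Omega$. In particular each edge $[v_i,v_j]$ is an open segment in $\partial\Omega$, so $v_i$ and $v_j$ lie in a common boundary face of dimension at least one; and when $k \geq 3$ each two dimensional face of $\overline{S}$ is an open triangle in $\partial\Omega$, hence lies in a boundary face of dimension at least two meeting $\overline{\Cc}$. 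Thus a properly embedded simplex of dimension at least three produces a boundary face of dimension at least two; contrapositively, (1) rules out all properly embedded simplices of dimension at least three, so under (1) every properly embedded simplex of dimension at least two has dimension exactly two. I would also record the complementary observation that an open three dimensional simplex contains a one-parameter family of properly embedded two dimensional simplices (fix two vertices and slide the third along an edge), so a simplex of dimension at least three would by itself violate the local finiteness required by (2); hence (2) likewise precludes simplices of dimension at least three, and the families in (2) and in the ``rank $\ge 2$'' theory of \cite{IZ2019b} coincide.

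The heart of (1) $\Rightarrow$ (2) is then the \emph{closed and discrete} assertion, and this I expect to be the main obstacle. Suppose for contradiction that there is a sequence of pairwise distinct properly embedded two dimensional simplices $S_n$ all meeting a fixed compact subset of $\Cc$ and converging in the local Hausdorff topology to a limit $Z \subset \overline{\Cc}$. After passing to a subsequence the vertex triples converge in $\overline{\Omega}$, and by the dictionary each edge of each $S_n$ is a maximal segment coinciding with a one dimensional boundary face; the edges converge to the edges of $Z$, which by (1) again lie in one dimensional faces. The goal is to show that the accumulation of these triangles forces the limiting configuration to span a boundary face of dimension at least two --- either because $Z$ itself has dimension at least two, or because the distinct limiting edges sweep out a two dimensional convex subset of $\partial\Omega$ --- contradicting (1). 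Controlling this degeneration, while each edge remains pinned inside a one dimensional face and using the finiteness afforded by the cocompact $\Lambda$-action on $\Cc$, is the delicate step. The reverse direction $\neg(1) \Rightarrow \neg(2)$ is the converse promotion: a boundary face of dimension at least two meeting $\overline{\Cc}$ should, via the cocompact action and the structure theory of \cite{IZ2019b}, be upgraded to a properly embedded simplex of dimension at least three (equivalently, to a non-locally-finite family of two dimensional simplices), violating (2).

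Finally, for (2) $\Leftrightarrow$ (3) $\Leftrightarrow$ (4) I would lean on the general framework. The equivalence (2) $\Leftrightarrow$ (3) is the isolated-simplices characterization of relative hyperbolicity of $(\Cc,\dist_\Omega)$ from \cite{IZ2019b}: a closed and discrete family of properly embedded simplices is exactly a peripheral structure realizing relative hyperbolicity, and conversely the peripheral subsets of such a Hilbert geometry are properly embedded simplices. For (3) $\Leftrightarrow$ (4) I would use that $\Lambda$ acts properly discontinuously, cocompactly, and by isometries on $(\Cc,\dist_\Omega)$, so $\Lambda$ is quasi-isometric to $\Cc$ and relative hyperbolicity transfers between the group and the space, with peripheral subgroups being the stabilizers of the peripheral simplices. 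It then remains to identify these stabilizers: the Hilbert metric on a two dimensional simplex is bi-Lipschitz to a normed plane, whose projective automorphism group is a two dimensional diagonal torus extended by the finite permutation group of the vertices, so a discrete group acting cocompactly on such a simplex is virtually Abelian of rank exactly two. Combining this with the dimension-exactly-two conclusion from (1) $\Leftrightarrow$ (2) yields precisely condition (4), including the ``possibly empty'' case, which recovers the hyperbolic setting of Theorem~\ref{thm:char_of_hyp_cc}.
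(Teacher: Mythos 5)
Your skeleton is broadly the right one---the dictionary ``simplex of dimension $\geq 3$ $\Rightarrow$ boundary face of dimension $\geq 2$'', the reduction of $(2)\Leftrightarrow(3)\Leftrightarrow(4)$ to the machinery of \cite{IZ2019b} plus a quasi-flat/stabilizer argument, and the identification of $(1)\Rightarrow(2)$ as the real content---but there is a genuine gap: the crux is set up and then deferred rather than proven. You arrange the contradiction (distinct two-dimensional simplices $S_n$ accumulating in the local Hausdorff topology) and then say the ``delicate step'' is to show the degeneration spans a two-dimensional face, either through the limit $Z$ or because ``the distinct limiting edges sweep out a two dimensional convex subset of $\partial\Omega$.'' That is not a mechanism, and it is not in fact how the contradiction arises: nearby distinct triangles need not span any two-dimensional convex subset of $\partial\Omega$. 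The paper's actual route requires two ingredients you do not have. First, a boundary-rigidity package: the tripod criterion (Proposition~\ref{prop:finding_two_d_faces}: points $x,y_1,y_2,y_3\in\partiali\Cc$ with $[x,y_j]\subset\partial\Omega$ and $(y_i,y_j)\subset\Omega$ force, after translating by $\Lambda$ and taking limits, a face whose intersection with $\partiali\Cc$ has coarse dimension $\geq 2$), from which follow Lemma~\ref{lem:lines_intersecting} (any segment of $\partiali\Cc$ meeting $\partial S$ lies in $\partial S$), Lemma~\ref{lem:bd_of_triangles_intersecting} (distinct two-dimensional simplices have disjoint boundaries), and Lemma~\ref{lem:faces_of_triangles} ($\partial S=\cup_{x\in\partial S}F_\Omega(x)$). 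Second, a different limiting argument: assuming $S_n\to S$ with $S_n\neq S$, one extracts points $q_n\in S$ going to infinity with $[q_n,p_0]\subset\overline{\Nc_\Omega(S_n;1)}$ and $\dist_\Omega(q_n,S_n)=1$, translates back by $\gamma_n\in\Lambda$, obtains limit simplices $S'$ and $S''$ sharing a boundary-face point, concludes $S'=S''$ by the rigidity lemmas, and contradicts $\dist_\Omega(q,S')=1$. Without something playing the role of Proposition~\ref{prop:finding_two_d_faces} and Lemma~\ref{lem:lines_intersecting}, your contradiction cannot be closed.

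A second problem is your proposed direct proof of $(2)\Rightarrow(1)$ (contrapositive: a face of dimension $\geq 2$ ``upgraded'' to a three-dimensional simplex or a non-discrete family of two-dimensional ones). The upgrade to a three-dimensional simplex is not available---nothing guarantees such a simplex exists---and the structure theory you invoke for the alternative (that segments or faces in $\partiali\Cc$ lie in boundaries of maximal simplices, Theorem~\ref{thm:structure_rel_hyp_cc} parts~\eqref{item:cc_two} and~\eqref{item:cc_one}) is only available \emph{after} relative hyperbolicity is known, which is precisely what is at issue in this direction; as written the argument is circular. The paper avoids this entirely by proving a single cycle $(1)\Rightarrow(2)\Rightarrow(3)\Rightarrow(4)\Rightarrow(1)$: under $(4)$, any positive-dimensional face lies in $\partial S$ for a maximal simplex $S$, and the Dru{\c t}u--Sapir quasi-flat theorem (Theorem~\ref{thm:rh_embeddings_of_flats}) forces $\dim S=2$, whence $\dim F_\Omega(x)\leq 1$. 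You should adopt that routing; it removes both your $(2)\Rightarrow(1)$ step and the need for a two-way transfer in $(3)\Leftrightarrow(4)$.
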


\begin{remark} The implications $(2) \Leftrightarrow (3)\Leftrightarrow (4) \Rightarrow (1)$ follow easily from the general results in~\cite{IZ2019b} and so the difficulty is showing that $(1) \Rightarrow (2/3/4)$. \end{remark}

\begin{remark} There are a number of other results in the literature concerning relatively hyperbolic groups acting on properly convex domains, see for instance~\cite{CM2014, CLT2015, Choi2017, Choi2017b, Choi_book,Weisman2020} (we note the authors of~\cite{CM2014} are currently preparing an erratum for their paper). With the exception of~\cite{Weisman2020}, these results consider the case when $\Gamma \backslash \Cc$ is non-compact and  $\Gamma$ is relatively hyperbolic with respect to the fundamental groups of the ends (under some geometric assumptions on the ends and $\Cc$). There is some similarity between Theorem~\ref{thm:cc_one_d_faces}  and the statements in~\cite{Choi2017, Choi2017b, Choi_book}, but to the best of our knowledge, there is no non-trivial mathematical overlap between the results. 
\end{remark}

Theorem~\ref{thm:cc_one_d_faces} can be viewed as an extension of the following result of Benoist. 

\begin{theorem}[Benoist~\cite{B2006}] If $M$ is a closed irreducible orientable 3-manifold and $M$ admits a convex real projective structure, then either
\begin{enumerate}
\item $M$ is geometric with geometry $\Rb^3$, $\Rb \times \Hb^2$, or $\Hb^3$,
\item $M$ is non-geometric and every component in the geometric decomposition is hyperbolic.
\end{enumerate}
\end{theorem}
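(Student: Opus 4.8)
The plan is to run the whole argument through the two characterizations already available, Theorem~\ref{thm:char_of_hyp_cc} and Theorem~\ref{thm:cc_one_d_faces}, and to organize the cases by the maximal dimension of a boundary face. First I would fix a convex projective structure and write $M = \Omega/\Gamma$, where $\Omega \subset \Pb(\Rb^4)$ is properly convex and $\Gamma \cong \pi_1(M)$ is a discrete, torsion-free subgroup of $\Aut(\Omega)$ acting freely and cocompactly on $\Omega$. Since the action is cocompact, $\Lc_{\Omega}(\Gamma) = \partial\Omega$ and $\Cc_\Omega(\Gamma) = \Omega$, so $\Gamma$ is convex co-compact with $\Cc = \Omega$ and both theorems apply verbatim with $\overline{\Cc} = \overline{\Omega}$; in particular, because $\Omega$ is contractible and the action is free, $M$ is aspherical. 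In $\Pb(\Rb^4)$ every boundary face of $\Omega$ has dimension $0$, $1$, or $2$, so I would let $\delta$ be the maximal such dimension and treat the three values separately. The first step, then, is to show that $\delta=0$ gives geometry $\Hb^3$: if every boundary point of $\Omega$ is extreme, Theorem~\ref{thm:char_of_hyp_cc} makes $\Gamma$ word hyperbolic, hence free of $\Zb^2$ subgroups and so atoroidal, and geometrization forces the closed aspherical irreducible manifold $M$ to be hyperbolic, which is one of the outcomes in conclusion~(1).

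For $\delta = 1$ the plan is to invoke Theorem~\ref{thm:cc_one_d_faces}: condition~(1) holds, so condition~(4) produces a collection $\Pc$ of virtually Abelian rank-two peripheral subgroups making $\Gamma$ relatively hyperbolic. This collection cannot be empty, since an empty collection would make $\Gamma$ word hyperbolic and hence $\Omega$ strictly convex by Theorem~\ref{thm:char_of_hyp_cc}, contradicting the existence of a one-dimensional face. With $\Pc \neq \emptyset$, each $P \in \Pc$ stabilizes and acts cocompactly on a properly embedded two-dimensional simplex $T_P$, so $T_P/P$ is a closed flat torus or Klein bottle; properness of the embedding makes the inclusion $\pi_1$-injective, so these descend to essential tori in $M$. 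A nontrivial toral relatively hyperbolic structure on the fundamental group of a closed aspherical $3$-manifold forces, by geometrization, a nontrivial geometric decomposition whose pieces are all atoroidal and hence hyperbolic, while the existence of proper peripheral subgroups precludes an infinite central element and so rules out any Seifert or flat piece. This yields exactly conclusion~(2).

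The case $\delta = 2$ is not governed by Theorem~\ref{thm:cc_one_d_faces} and is where the two remaining geometries appear. Here the plan is to show that a two-dimensional boundary face forces the action of $\Gamma$ to be reducible and $\Omega$ to carry a product or cone structure. Concretely, I would study the $\Gamma$-orbit and stabilizer of a maximal two-dimensional face and, using the Benoist--Vey decomposition theory for divisible convex sets, show that $\Gamma$ virtually preserves a direct-sum splitting of $\Rb^4$. Two possibilities then arise: either $\Omega$ is a $3$-simplex, so that $\Gamma$ is virtually $\Zb^3$ acting by diagonal matrices and $M$ is flat with geometry $\Rb^3$; or $\Omega$ is projectively a cone over a two-dimensional strictly convex domain, with the distinguished two-dimensional face as its base disk, so that $\Gamma$ preserves a Seifert fibration over a hyperbolic $2$-orbifold and $M$ has geometry $\Rb \times \Hb^2$. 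In either subcase $M$ is geometric, giving conclusion~(1).

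The main obstacle is precisely this last case. Unlike $\delta \le 1$, it is not reducible to the clean dichotomy of Theorem~\ref{thm:cc_one_d_faces}, and one must instead establish the rigidity statement that a two-dimensional boundary face in a divisible properly convex set in $\Pb(\Rb^4)$ forces an invariant splitting and hence one of the two product geometries; this is the structural analysis of $\partial\Omega$, via projective duality and the classification of the possible decomposition factors, that lies at the heart of Benoist's original argument. A secondary technical point, in the $\delta = 1$ case, is the passage from toral relative hyperbolicity to the precise assertion that \emph{every} piece of the geometric decomposition is hyperbolic, which relies on geometrization together with the incompatibility of a proper peripheral structure with Seifert geometry.
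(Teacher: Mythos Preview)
The paper does not prove this theorem: it is quoted from Benoist~\cite{B2006} as background and motivation for Theorem~\ref{thm:cc_one_d_faces}, with no proof given. The only hint the paper offers is the sentence ``using the structure of 3-manifolds one can deduce Benoist's theorem fairly easily from the above corollary,'' so there is no argument here against which to compare yours.

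That said, your outline is exactly in the spirit of that remark. Splitting on the maximal boundary-face dimension $\delta$ and handling $\delta\le 1$ via Theorems~\ref{thm:char_of_hyp_cc} and~\ref{thm:cc_one_d_faces} together with geometrization is the natural route, and you correctly isolate the two places where real work remains. For $\delta=1$, the step ``$\pi_1(M)$ is relatively hyperbolic with respect to virtually $\Zb^2$ subgroups $\Rightarrow$ every geometric piece is hyperbolic'' is a genuine theorem about $3$-manifold groups (it is essentially the statement that a closed irreducible $3$-manifold group admits a nontrivial relatively hyperbolic structure iff at least one JSJ piece is hyperbolic, and then the peripheral structure is forced); your appeal to an ``infinite central element'' is not quite enough, since a Seifert piece contributes an infinite cyclic \emph{normal} subgroup which need not be central, and one really uses thickness/non-relative-hyperbolicity of graph-manifold and mixed groups. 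For $\delta=2$ you are right that nothing in this paper applies and one must fall back on Benoist's structure theory for divisible convex sets (indecomposability, the Vey/Benoist splitting, and the classification of cones) to force $\Omega$ to be a $3$-simplex or a cone over a strictly convex $2$-domain; that is precisely the content of~\cite{B2006} and cannot be shortcut with the tools developed here.
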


Using Benoist's theorem, one can deduce the following special case of Theorem~\ref{thm:cc_one_d_faces}.

\begin{corollary}[to Benoist's result] Suppose $\Omega \subset \Pb(\Rb^4)$ is a properly convex domain and $\Gamma \subset \Aut(\Omega)$ is a discrete group which acts co-compactly on $\Omega$. If every boundary face of $\Omega$ has dimension at most one, then $\Gamma$ is relatively hyperbolic with respect to a (possibly empty) collection of virtually Abelian subgroups of rank two.
\end{corollary}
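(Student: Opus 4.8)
Although the corollary is formally the cocompact case of Theorem~\ref{thm:cc_one_d_faces} (when $\Lambda$ divides $\Omega$ one has $\Cc_\Omega(\Lambda)=\Omega$, so hypothesis~(1) there becomes the present one), the intended content is an independent derivation from Benoist's trichotomy, and that is the route I would take. The first step is to arrange the hypotheses of Benoist's theorem. Relative hyperbolicity, together with its peripheral structure up to commensurability, passes both to and from finite-index subgroups, and the boundary faces of $\Omega$ depend only on $\Omega$; hence I may freely replace $\Lambda$ by a finite-index subgroup. By Selberg's lemma I pass to a torsion-free finite-index subgroup, and then to the orientation double cover, so that $M:=\Omega/\Lambda$ is a closed orientable $3$-manifold equipped with the convex real projective structure coming from the cocompact action on $\Omega$. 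Since $\Omega$ is convex it is contractible, so $M$ is aspherical; thus $\pi_2(M)=0$ and the Sphere Theorem (with the Poincar\'e conjecture) gives that $M$ is irreducible.

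Now I would apply the quoted theorem of Benoist~\cite{B2006}: either $M$ is geometric with geometry $\Rb^3$, $\Rb\times\Hb^2$, or $\Hb^3$, or $M$ is non-geometric with every piece of its geometric decomposition hyperbolic. The crux is that the one-dimensional-face hypothesis rules out the two Seifert geometries. In the flat case the group $\Lambda$ is virtually $\Zb^3$, and a properly convex domain of $\Pb(\Rb^4)$ divided by a virtually abelian group whose rank equals the dimension is a $3$-simplex (Benoist); but a $3$-simplex has two-dimensional boundary faces, contradicting the hypothesis. In the $\Rb\times\Hb^2$ case $M$ is Seifert fibered over a hyperbolic base, and Benoist's description of the boundary of a divisible convex body in $\Pb(\Rb^4)$ shows that such a Seifert piece forces a two-dimensional face of $\partial\Omega$, again a contradiction. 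Hence only the geometry $\Hb^3$ and the non-geometric case survive.

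In each surviving case I would exhibit the peripheral structure directly. If $M$ carries the geometry $\Hb^3$ then $\Lambda$ is a cocompact lattice in $\Isom(\Hb^3)$ and hence word hyperbolic, i.e.\ relatively hyperbolic with respect to the empty collection. If $M$ is non-geometric with all pieces hyperbolic, then $\Lambda=\pi_1(M)$ is the fundamental group of a graph of groups whose vertex groups are finite-volume hyperbolic (hence hyperbolic relative to their cusp tori) and whose edge groups are the JSJ tori; a standard combination theorem for relatively hyperbolic groups then shows that $\Lambda$ is hyperbolic relative to the collection of JSJ torus subgroups, each isomorphic to $\Zb^2$ and so virtually abelian of rank two. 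In both cases $\Lambda$ is relatively hyperbolic with respect to a (possibly empty) collection of virtually abelian subgroups of rank two, which is the assertion.

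The main obstacle is the exclusion of the two Seifert geometries, that is, showing that an $\Rb^3$- or $\Rb\times\Hb^2$-structure forces a two-dimensional boundary face of $\Omega$; this is precisely where Benoist's fine analysis of $\partial\Omega$ for divisible convex bodies in dimension three enters, and it is the only step that is not soft. A secondary point, routine but worth stating with care, is the bookkeeping of peripheral subgroups under the finite-index reductions and under the combination theorem in the non-geometric case, so that the peripherals remain virtually abelian of rank two.
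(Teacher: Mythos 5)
Your proposal is correct and takes essentially the same route the paper intends: the corollary is stated there as a direct consequence of Benoist's trichotomy with no further written proof, and your argument (finite-index/torsion-free reduction, exclusion of the $\Rb^3$ and $\Rb\times\Hb^2$ geometries because Benoist's classification of divisible convex sets in $\Pb(\Rb^4)$ forces a $3$-simplex, respectively a cone over a strictly convex $2$-domain, hence a two-dimensional boundary face, and standard relative hyperbolicity in the $\Hb^3$ and non-geometric cases) is exactly that intended deduction. The two points you flag as delicate, the Seifert exclusions and the finite-index bookkeeping of peripherals, are indeed the only non-soft steps, and the results you cite (Benoist's structure theory, Dahmani-type combination, and quasi-isometry invariance of relative hyperbolicity with virtually abelian peripherals) cover them.
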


In fact, using the theory of 3-manifolds and relatively hyperbolic groups, one can deduce Benoist's theorem  from the above corollary and so Theorem~\ref{thm:cc_one_d_faces} can be viewed as an extension of this restated version of Benoist's theorem.

Theorem~\ref{thm:cc_one_d_faces} also provides a partial answer to a question asked by Choi--Lee--Marquis.

\begin{question}\cite[Remark 1.11]{CLM2016}  Suppose $\Omega \subset \Pb(\Rb^d)$ is a properly convex domain and $\Gamma \subset \Aut(\Omega)$ is a discrete group which acts co-compactly on $\Omega$. If $\Omega$ is irreducible and non-symmetric, is $\Gamma$ relatively hyperbolic with respect to a (possibly empty) collection of virtually Abelian subgroups of rank at least two? \end{question}

Theorem~\ref{thm:cc_one_d_faces} says the answer is yes when every boundary face of $\Omega$ has dimension at most one.

\subsection{Naive convex co-compact subgroups} We will also prove a version of Theorem~\ref{thm:cc_one_d_faces} for naive convex co-compact subgroups. This is a larger class of groups and as such the result is, by necessity, more technical. 

\begin{definition}\label{defn:cc_naive}  Suppose $\Omega \subset \Pb(\Rb^d)$ is a properly convex domain. An infinite discrete subgroup $\Gamma \subset \Aut(\Omega)$ is called \emph{naive convex co-compact} if there exists a non-empty closed convex subset $\Cc \subset \Omega$ such that 
\begin{enumerate}
\item $\Cc$ is $\Gamma$-invariant, that is, $g\Cc = \Cc$ for all $g \in \Gamma$, and
\item $\Gamma$ acts co-compactly on $\Cc$. 
\end{enumerate}
In this case, we say that $(\Omega, \Cc, \Gamma)$ is a \emph{naive convex co-compact triple}. 
\end{definition}

It is straightforward to construct examples where $\Gamma \subset \Aut(\Omega)$ is naive convex co-compact, but not convex co-compact (see for instance \cite[Section 2.3]{IZ2019b}). In these cases, the convex subset $\Cc$ in Definition~\ref{defn:cc_naive} is a strict subset of $\Cc_\Omega(\Gamma)$.

One key difference between convex co-compact and naive convex co-compact subgroups is the following:  If $\Gamma \subset \Aut(\Omega)$ is a convex co-compact subgroup and $\overline{\Cc_\Omega(\Gamma)}$ intersects an open boundary face $F$ of $\partial \Omega$, then $F \subset \overline{\Cc_\Omega(\Gamma)}$; see for instance \cite[Section 4]{DGF2017}. However, if $(\Omega, \Cc, \Gamma)$ is a naive convex co-compact triple, then it is possible for $\overline{\Cc}$ to intersect a boundary face without containing it entirely; see the following example.

\begin{example*}
Consider $\Omega:=\{ [x_1 : x_2 : x_3] : x_1, x_2 , x_3 >0\}$, $\Cc:=\{[x_1:y:y] : x_1,y >0\}$, and $\Gamma:=\left\langle \begin{bmatrix} 2 & 0 & 0 \\0 & 1 & 0 \\ 0 & 0 & 1 \end{bmatrix}\right\rangle$. Then $(\Omega, \Cc, \Gamma)$ is a naive convex co-compact triple. Further  $F_{\Omega}([0:1:1]) \cap \overline{\Cc}=\{ [0:1:1]\}$ while 
$$
F_{\Omega}([0:1:1])=\{ [0: x_2:x_3] : x_2,x_3>0\} \not \subset \overline{\Cc}.
$$
\end{example*}

So when studying naive convex co-compact subgroups, it is not enough to consider the dimension of the boundary faces of $\Omega$ which intersect the closure of convex subset $\Cc$, but the ``size'' of $\overline{\Cc}$ in each boundary face.

To make ``size'' precise we introduce the following definition. 

\begin{definition} 
\label{defn:cdim}
Suppose $\Omega \subset \Pb(\Rb^d)$ is properly convex and open in its span. Then the \emph{coarse dimension} of a non-empty subset $A \subset \Omega$, denoted by $\cdim_\Omega(A)$, is the smallest integer $k \geq 0$ such that there exist $R > 0$ and a $k$-dimensional convex subset $B \subset \Omega$ such that
\begin{align*}
A \subset \Nc_{\Omega}(B;R):=\{ p \in \Omega : \dist_\Omega(p,B) < R\}
\end{align*}
where $\dist_\Omega$ is the Hilbert metric on $\Omega$. In the extremal case when $\Omega$ is a point, we define $\cdim_\Omega(\Omega):=0$. 
\end{definition}

\begin{example*} 
Suppose $\Omega$ and $\Cc$ are as in the previous example. Then, for any $r>0$,  $\cdim_{\Omega}\left( \Nc_{\Omega}(\Cc;r) \right)=1$ and $$\cdim_{F_{\Omega}([0:1:1])} \left( \overline{\Nc_{\Omega}(\Cc;r)} \cap F_{\Omega}([0:1:1])   \right)=0.$$
\end{example*} 

We will show that the coarse dimension of boundary faces can be used to characterize word hyperbolic naive convex co-compact subgroups. 

\begin{theorem}\label{thm:ncc_GH}(see Section~\ref{sec:pf_of_GH_ncc}) Suppose $(\Omega, \Cc, \Gamma)$ is a naive convex co-compact triple. Then the following are equivalent: 
\begin{enumerate}
\item $\cdim_{F_\Omega(x)} \left( \overline{\Cc} \cap F_\Omega(x) \right)=0$ for all $x \in \overline{\Cc} \cap \partial \Omega$,
\item $\Cc$ does not contain a properly embedded simplex with dimension at least two, 
\item $(\Cc, \dist_\Omega)$ is Gromov hyperbolic, 
\item $\Gamma$ is a word hyperbolic group.
\end{enumerate}
\end{theorem}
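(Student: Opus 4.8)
The plan is to treat the equivalences $(2)\Leftrightarrow(3)\Leftrightarrow(4)$ as the ``soft'' part inherited from the general theory, and to concentrate the work on the new link with the coarse-dimension condition $(1)$. Since $\Lambda$ is discrete it acts properly discontinuously on the closed $\Lambda$-invariant convex set $\Cc$, and this action is cocompact by hypothesis; as $(\Cc,\dist_\Omega)$ is a proper geodesic space (projective segments are geodesics for the Hilbert metric and $\Cc$ is convex), the Milnor--\v{S}varc lemma shows $\Lambda$ is finitely generated and quasi-isometric to $(\Cc,\dist_\Omega)$, so $(3)\Leftrightarrow(4)$. For $(2)\Leftrightarrow(3)$ I would invoke the results of~\cite{IZ2019b}, specialized to the empty peripheral collection: a properly embedded simplex of dimension $\geq 2$ is a quasi-isometrically embedded flat and so obstructs Gromov hyperbolicity, while its absence yields hyperbolicity by the Benoist--Danciger--Gu\'eritaud--Kassel argument underlying Theorem~\ref{thm:char_of_hyp_cc}, which uses only the metric geometry of $\Cc$ together with the cocompact action. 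Thus everything reduces to proving $(1)\Leftrightarrow(2)$.

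For $(1)\Rightarrow(2)$ I would argue by contraposition. Suppose $S\subset\Cc$ is a properly embedded simplex of dimension $k\geq 2$ with vertices $v_1,\dots,v_{k+1}$. Properness forces $\overline S\setminus S\subset\partial\Omega$, so every open edge of $S$ lies in $\partial\Omega$. Fix the open edge $e=(v_1,v_2)$, a point $w\in e$, and set $F:=F_\Omega(w)$; then $e\subset F$ and $e\subset\overline S\subset\overline\Cc$, so $e\subset\overline\Cc\cap F$. Let $P$ be the projective plane through $v_1,v_2,v_3$. Inside $P$ the three edges $(v_1,v_2),(v_1,v_3),(v_2,v_3)$ bound the planar section of $S$, and since $(v_1,v_3)$ leaves $v_1$ in a direction not collinear with $e$, the boundary of $\Omega\cap P$ genuinely turns at $v_1$; hence $v_1$ is an endpoint of the maximal boundary segment $F\cap P$, that is, $v_1$ lies on the relative boundary of $F$. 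Therefore the chord of $F$ through $w$ in the direction of $e$ terminates at $v_1$, so $\dist_F(w,w_n)\to\infty$ as $w_n\to v_1$ along $e$. Thus $e$ is unbounded in $(F,\dist_F)$, giving $\cdim_F(\overline\Cc\cap F)\geq 1$ and contradicting $(1)$.

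The reverse implication $(2)\Rightarrow(1)$, again by contraposition, is where the real work lies. Assume some $x\in\overline\Cc\cap\partial\Omega$ satisfies $\cdim_F(\overline\Cc\cap F)\geq 1$ with $F:=F_\Omega(x)$. Then $\overline\Cc\cap F$ is a closed convex subset of the properly convex domain $F$ which is unbounded in $\dist_F$; by properness of the Hilbert metric on $F$ and an Arzel\`a--Ascoli argument it contains a geodesic ray $\rho$ limiting to a point $\eta$ on the relative boundary of $F$. With $a_0:=\rho(0)$, the segment $[a_0,\eta]$ lies in $\overline\Cc$ (closed and convex) and in $\overline F\subset\partial\Omega$, so $\overline\Cc\cap\partial\Omega$ contains a nondegenerate segment lying in the single face $F$. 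The decisive step is to upgrade this one boundary segment to a properly embedded $2$-simplex inside $\Cc$. Here I would exploit the cocompactness of the $\Lambda$-action through a renormalization argument in the spirit of Benoist and of~\cite{IZ2019b}: taking $q_n\in\Cc$ that converge to an interior point of $[a_0,\eta]$ while escaping every compact subset of $\Cc$, and elements $g_n\in\Lambda$ returning $g_nq_n$ to a fixed compact set, one passes to a subsequence along which the projective maps $g_n$ converge; the $\Lambda$-invariance of $\overline\Cc$ and the flatness of $[a_0,\eta]$ are then leveraged to produce, in the limit, enough boundary segments of $\overline\Cc$ to bound a two-dimensional simplex.

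I expect this last construction to be the main obstacle. The crux is producing, in the limit, a second boundary segment transverse to $[a_0,\eta]$: one must rule out the degenerate possibility that the renormalized translates only reproduce a single segment, and this is exactly where the quantitative hypothesis $\cdim_F(\overline\Cc\cap F)\geq 1$---that $\overline\Cc$ fills an unbounded, hence after renormalization genuinely two-dimensional, region abutting $F$---is used, rather than the weaker failure of some boundary point to be extreme. Granting a non-degenerate limit, the three edges automatically lie in $\partial\Omega$ while the open triangle lies in $\overline\Cc\cap\Omega=\Cc$, so the simplex is properly embedded and $(2)$ fails. Combining $(1)\Leftrightarrow(2)$ with $(2)\Leftrightarrow(3)\Leftrightarrow(4)$ then completes the proof.
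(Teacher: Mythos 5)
Your treatment of the easy implications is essentially sound: $(3)\Leftrightarrow(4)$ by Milnor--\v{S}varc is exactly what the paper does, and your $(1)\Rightarrow(2)$ is the paper's Observation~\ref{obs:qdim_of_faces_PES} in disguise --- although your planar-section argument implicitly assumes $\dim S=2$: when $\dim S\geq 3$ the open triangle spanned by $v_1,v_2,v_3$ is a proper face of $S$ and hence lies in $\partial\Omega$, so ``the boundary of $\Omega\cap P$ turns at $v_1$'' is meaningless there; the clean fix is to quote Observation~\ref{obs:faces_of_simplices_are_properly_embedded}, which says the open edge $e=F_S(w)$ is properly embedded in $F_\Omega(w)$, hence unbounded in $\dist_{F_\Omega(w)}$. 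For $(2)\Leftrightarrow(3)$, your first option (citing \cite{IZ2019b} with the empty peripheral family) is legitimate; your second option is not: Theorem~\ref{thm:char_of_hyp_cc} of Danciger--Gu\'eritaud--Kassel concerns $\Cc_\Omega(\Lambda)$, whose closure contains every boundary face it touches, and that saturation property is precisely what fails for naive triples, so you cannot simply assert that their argument ``uses only the metric geometry of $\Cc$ together with the cocompact action.''

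The genuine gap is the direction $(2)\Rightarrow(1)$, i.e.\ (contrapositively) that a face $F$ with $\cdim_F\left(\overline{\Cc}\cap F\right)\geq 1$ forces a properly embedded two-dimensional simplex in $\Cc$. You explicitly defer this (``granting a non-degenerate limit''), and this deferred step is the actual mathematical content of the theorem; it is the paper's Lemma~\ref{lem:existence_of_2_simplices}. Moreover, your proposed renormalization --- pushing points $q_n$ toward an interior point of the single boundary segment $[a_0,\eta]$ --- is not set up to rule out the degeneration you yourself flag, since $\Lambda$-invariance of $\overline{\Cc}$ only returns information about $\overline{\Cc}$ near one limiting segment and produces no transverse side. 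The paper's mechanism is different and two-sided: choose $a_n,b_n\in\partiali\Cc\cap F_\Omega(x_n)$ with $\dist_{F_\Omega(x_n)}(a_n,b_n)>n$ and $x_n$ the Hilbert midpoint, fix $p_0\in\Cc$, and prove the key Claim (via the lower semicontinuity in Proposition~\ref{prop:dist_est_and_faces}) that there exist $y_n\in[p_0,x_n)\subset\Cc$ with
\begin{align*}
\min\left\{ \dist_\Omega\left(y_n,[p_0,a_n)\right),\ \dist_\Omega\left(y_n,[p_0,b_n)\right)\right\} > n/2 .
\end{align*}
One then renormalizes the interior points $y_n$ (not points near the boundary segment) by $\gamma_n\in\Lambda$: the divergence built into the Claim forces both limit segments $[p,a]$ and $[p,b]$ into $\partiali\Cc$, the segment $[a,b]$ is in $\partiali\Cc$ already because $[a_n,b_n]$ lies in a boundary face, and $y=\lim\gamma_ny_n\in\Cc$ is an interior point of the limiting triangle, which is therefore a properly embedded $2$-simplex in $\Cc$. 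This midpoint-plus-Claim configuration is exactly the ``second transverse boundary segment'' your sketch identifies as the crux but does not supply; without it the proof does not go through.
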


\begin{remark} Recall, if $x \in \partial \Omega$ is an extreme point, then $F_\Omega(x) = \{x\}$ and so $\dim F_\Omega(x) = 0$. Hence, Theorem~\ref{thm:ncc_GH} is a naive convex co-compact analog of Theorem~\ref{thm:char_of_hyp_cc}.
\end{remark}

For naive convex co-compact subgroups we also prove the following analog of Theorem~\ref{thm:cc_one_d_faces}. 

\begin{theorem}\label{thm:ncc_one_d_faces}(see Section~\ref{sec:pf_of_ond_d_faces_ncc}) Suppose $(\Omega, \Cc, \Gamma)$ is a naive convex co-compact triple. Then the following are equivalent: 
\begin{enumerate}
\item $\cdim_{F_\Omega(x)} \left(\overline{\Cc} \cap F_\Omega(x)\right) \leq 1$ for all $x \in \overline{\Cc}\cap \partial \Omega$,
\item $(\Cc,\dist_\Omega)$ is relatively hyperbolic with respect to a (possibly empty) collection of two dimensional properly embedded simplices, 
\item $\Gamma$ is a relatively hyperbolic group with respect to a (possibly empty) collection of virtually Abelian subgroups of rank two.
\end{enumerate}
\end{theorem}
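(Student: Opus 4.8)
The plan is to treat $(2)\Leftrightarrow(3)$ and $(2/3)\Rightarrow(1)$ as the ``soft'' directions and to concentrate on $(1)\Rightarrow(2/3)$. For the soft directions I would invoke the general machinery of~\cite{IZ2019b}: there the relative hyperbolicity of a naive convex co-compact triple is characterized through the collection of \emph{maximal} properly embedded simplices of dimension at least two in $\Cc$, the point being that $\Lambda$ permutes these simplices with finitely many orbits (by co-compactness), the stabilizer of a properly embedded $k$-simplex is virtually $\Zb^{k}$, and relative hyperbolicity transfers between $(\Cc,\dist_\Omega)$ and $\Lambda$. Granting this, $(2)\Leftrightarrow(3)$ is the metric-to-group dictionary, and a two-dimensional peripheral simplex corresponds exactly to a rank-two virtually Abelian peripheral subgroup. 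For $(2/3)\Rightarrow(1)$ I would use the easy half of the correspondence between simplices and faces: if $S\subset\Cc$ were a properly embedded simplex of dimension $k\geq 3$, then a codimension-one face $S'$ of $S$ has its relative interior contained in a single boundary face $F=F_\Omega(x)$ with $x\in\overline{\Cc}\cap\partial\Omega$, and $\relint(S')\subset\overline{\Cc}\cap F$ is a properly embedded $(k-1)$-simplex, whence $\cdim_F(\overline{\Cc}\cap F)\geq k-1\geq 2$; since relative hyperbolicity with respect to two-dimensional simplices forbids simplices of dimension $\geq 3$, condition $(1)$ follows.

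The substance is $(1)\Rightarrow(2/3)$, which I would break into two steps. \emph{Step 1 (dimension bound).} Using the face argument in the contrapositive, I would show that $(1)$ forces every properly embedded simplex in $\Cc$ to have dimension at most two: a properly embedded $k$-simplex produces a properly embedded $(k-1)$-simplex inside a single boundary face $F$, and since a properly embedded $(k-1)$-simplex has coarse dimension exactly $k-1$ in $F$ (the nontrivial inequality being that a $(k-1)$-flat cannot lie in a bounded Hilbert neighborhood of a convex set of dimension $<k-1$, a basic property of $\cdim_F$ from Definition~\ref{defn:cdim}), condition $(1)$ gives $k-1\leq 1$.

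\emph{Step 2 (isolation).} Having bounded all simplex dimensions by two, by~\cite{IZ2019b} it remains to prove that the family $\Sc$ of maximal properly embedded two-dimensional simplices in $\Cc$ is closed and discrete in the local Hausdorff topology induced by $\dist_\Omega$; this is the heart of the argument. I would argue by contradiction: if $\Sc$ fails to be closed and discrete, then there is a sequence of pairwise distinct $S_n\in\Sc$ converging, uniformly on compact sets in the Hausdorff sense, to a properly embedded two-simplex $S_\infty$. After translating by elements of $\Lambda$ (using co-compactness) so that all the $S_n$ meet a fixed compact subset of $\Cc$, I would pass to limits of their vertices and edges. The distinct-but-convergent data should force an increase in dimension: either the convex hull of $S_n\cup S_\infty$ contains, for large $n$, a properly embedded simplex of dimension at least three—contradicting Step 1—or the convergent open edges $e_n\to e_\infty$ (which are segments in $\partial\Omega$) accumulate onto a common boundary face $F$ and sweep out a two-dimensional subset of $\overline{\Cc}\cap F$, giving $\cdim_F(\overline{\Cc}\cap F)\geq 2$ and contradicting $(1)$. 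Either way we reach a contradiction, so $\Sc$ is closed and discrete, and~\cite{IZ2019b} then delivers $(2)$ and $(3)$.

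The main obstacle is Step 2, and the delicate points are twofold. First, one must ensure that local Hausdorff limits of properly embedded two-simplices remain properly embedded two-simplices rather than degenerating into lower-dimensional or non-proper objects: \emph{properness} can be lost in limits, and I would control this using the uniform properness coming from co-compactness of the $\Lambda$-action together with the dimension bound of Step 1. Second, one must show that the accumulation genuinely raises a dimension—that nearby distinct two-simplices either span an honest three-simplex or pile up inside a single boundary face—rather than merely coinciding in the limit. This requires a careful analysis of the faces of $\partial\Omega$ meeting $\overline{\Cc}$ and of how the edges of the $S_n$ sit relative to the supporting hyperplanes of $\Omega$, and it is precisely here that working with coarse rather than topological dimension, as forced by the naive setting, is essential.
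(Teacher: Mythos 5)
Your handling of $(2)\Leftrightarrow(3)$ matches the paper, but each of the two remaining directions has a genuine gap. For $(2/3)\Rightarrow(1)$ your logic runs in the wrong direction: what you prove is that a properly embedded simplex of dimension $k\geq 3$ forces a boundary face of coarse dimension $\geq 2$, i.e.\ that $(1)$ implies the absence of simplices of dimension $\geq 3$. Combining this with ``$(2)$ implies the absence of simplices of dimension $\geq 3$'' yields nothing; you would need the converse implication ``no properly embedded simplices of dimension $\geq 3$ $\Rightarrow$ $(1)$,'' and that implication is \emph{false}. For a counterexample, let $\Omega_1 \subset \Pb(\Rb^3)$ be a strictly convex domain divided by a surface group $\Gamma$, and let $\Omega \subset \Pb(\Rb^4)$ be the open join of $\Omega_1$ with a point, on which $\Lambda = \Gamma \times \Zb$ acts co-compactly (so $(\Omega,\Omega,\Lambda)$ is a naive convex co-compact triple). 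A properly embedded $3$-simplex would need all four of its closed facets in $\partial \Omega$, but since $\Omega_1$ is strictly convex the only $2$-dimensional convex subsets of $\partial\Omega$ lie in the single base face $F \cong \Omega_1$; hence every properly embedded simplex has dimension at most two. Yet $\cdim_{F}\left(\overline{\Cc} \cap F\right) = 2$, so $(1)$ fails, and indeed $\Gamma \times \Zb$ is not relatively hyperbolic. This is exactly why the paper's proof of $(2)\Rightarrow(1)$ is not soft: it passes to a strongly isolated, coarsely complete, $\Lambda$-invariant family via Theorem~\ref{thm:ncc_strong_IS}, proves the non-tangential approximation Lemma~\ref{lem:cdim2-implies-simplex} to attach any face of infinite diameter to a single simplex of that family, and only then bounds the coarse dimension by $1$ using the Hausdorff estimate of Theorem~\ref{thm:ncc_rel_hyp} part~\eqref{item:ncc_six}.

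For $(1)\Rightarrow(2)$, your Step 1 is fine, but Step 2 aims at a statement that is false in the naive setting: even assuming $(1)$, the family of \emph{all} maximal properly embedded two-dimensional simplices in $\Cc$ need not be closed and discrete, because of parallel simplices (Definition~\ref{defn:parallel}). Concretely, take $\Omega$ a $3$-simplex (tetrahedron), $\Lambda \cong \Zb^2$ a lattice of diagonal matrices, and $\Cc \cong \Delta \times [1,2]$ the convex hull of two parallel properly embedded triangles $\Delta_1, \Delta_2$; then $(1)$ holds (each face of $\Omega$ meets $\overline{\Cc}$ in a set of coarse dimension $\leq 1$), but $\Cc$ contains a continuous family $\{\Delta_s\}_{s \in [1,2]}$ of maximal properly embedded triangles, which is not discrete in the local Hausdorff topology. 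So the contradiction argument you sketch cannot close: a sequence of distinct but convergent two-simplices may simply be a bounded parallel family, which neither spans a three-simplex nor raises the coarse dimension of any face. This is precisely the phenomenon (cf.\ \cite[Section 2.3]{IZ2019b} and Definition~\ref{defn:IS}) that separates the naive case from the convex co-compact case, where Theorem~\ref{thm:structure_rel_hyp_cc} part~\eqref{item:cc_two} rules parallel families out. The paper's proof supplies the missing idea: it collapses each parallel class to a canonical representative $\Phi(S) := \relint {\rm ConvHull}_\Omega(w_1,w_2,w_3)$, where $w_j$ is the center of mass (Proposition~\ref{prop:center_of_mass}) of $\partiali \Cc \cap F_\Omega(v_j)$ --- well defined because Lemma~\ref{lem:faces_of_vertices} makes these sets compact under $(1)$ --- and then proves that the image family $\Sc = \Phi(\Sc_0)$, not $\Sc_0$ itself, is isolated, coarsely complete, and $\Lambda$-invariant, using Lemma~\ref{lem:lines_intersecting_faces}, Lemma~\ref{lem:bd_faces_of_triangles_intersecting}, and the face-configuration Proposition~\ref{prop:finding_two_d_faces}. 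Without this reduction, and without a uniform bound of the type in Lemma~\ref{lem:faces_of_vertices}, your Step 2 has no true statement to prove.
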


\subsection*{Acknowledgements} M. Islam thanks Louisiana State University for hospitality during a visit where work on this project started. 

M. Islam was partially supported by grant DMS-1607260 from the
National Science Foundation. A. Zimmer was partially supported by a Sloan research fellowship and grants DMS-2105580 and DMS-2104381 from the National Science Foundation.

The authors thank the anonymous referee(s) for their comments and suggestions.

\section{Preliminaries}\label{sec:prelim}
 
 \subsection{Convexity}
 
 In this section we recall some standard definitions related to convexity in real projective space. 

\begin{definition} \ 
\begin{enumerate}
\item A subset $C \subset \Pb(\Rb^d)$ is \emph{convex} if there exists an affine chart $\mathbb{A}$ of $\Pb(\Rb^d)$ where $C \subset \mathbb{A}$ is a convex subset. 
\item A subset $C \subset \Pb(\Rb^d)$ is \emph{properly convex} if there exists an affine chart $\mathbb{A}$ of $\Pb(\Rb^d)$ where $C \subset \mathbb{A}$ is a bounded convex subset. 
\item When $C$ is a properly convex set which is open in $\Pb(\Rb^d)$ we say that $C$ is a \emph{properly convex domain}.
\end{enumerate}
\end{definition}

Notice that if $C \subset \Pb(\Rb^d)$ is convex, then $C$ is a convex subset of every affine chart that contains it. 

A \emph{line segment} in $\Pb(\Rb^{d})$ is a connected subset of a projective line. Given two points $x,y \in \Pb(\Rb^{d})$ there is no canonical line segment with endpoints $x$ and $y$, but we will use the following convention: if $C \subset \Pb(\Rb^d)$ is a properly convex set and $x,y \in \overline{C}$, then (when the context is clear) we will let $[x,y]$ denote the closed line segment joining $x$ to $y$ which is contained in $\overline{C}$. In this case, we will also let $(x,y)=[x,y]\setminus\{x,y\}$, $[x,y)=[x,y]\setminus\{y\}$, and $(x,y]=[x,y]\setminus\{x\}$.

Along similar lines, given a properly convex subset $C \subset \Pb(\Rb^d)$ and a subset $X \subset C$ we will let 
\begin{align*}
{\rm ConvHull}_C(X)
\end{align*}
 denote the smallest convex subset of $C$ which contains $X$.

If $V \subset \Rb^d$ is a non-zero linear subspace, we will let $\Pb(V) \subset \Pb(\Rb^d)$ denote its projectivization. For a non-empty set $X \subset \Pb(\Rb^d)$, $\Pb(\Spanset(X))$ is the projectivization of the linear span of $X$. 

We also make the following topological definitions.

\begin{definition}\label{defn:topology} Suppose $C \subset \Pb(\Rb^d)$ is a properly convex set. The \emph{relative interior of $C$}, denoted by $\relint(C)$, is  the interior of $C$ in $\Pb( \Spanset C)$. In the case that $C = \relint(C)$, then $C$ is said to be \emph{open in its span}. The \emph{boundary of $C$} is $\partial C : = \overline{C} \setminus \relint(C)$, and the \emph{ideal boundary of $C$} is
\begin{align*}
\partiali C := \partial C \setminus C.
\end{align*}
Finally, we define $\dim C$ to be the dimension of $\relint(C)$ (notice that $\relint(C)$ is homeomorphic to $\Rb^{\dim C}$). 
\end{definition}

Recall from Definition \ref{defn:properly-embedded} that a subset $A \subset B \subset \Pb(\Rb^d)$ is properly embedded if the inclusion map $A \hookrightarrow B$ is proper. If $B$ is a properly convex set, then we have another characterization of properly embedded subsets using the notation in Definition~\ref{defn:topology}~: $A \subset B$ is properly embedded if and only if $\partiali A \subset \partiali B$.

\subsection{The Hilbert metric and faces}\label{subsec:Hilbert_metric} 

Suppose $\Omega \subset \Pb(\Rb^{d})$ is a properly convex domain. For distinct points $x,y \in \Omega$, let $\overline{xy}$ be the projective line containing them and let $a,b$ be the two points in $\overline{xy} \cap \partial\Omega$ ordered $a, x, y, b$ along $\overline{xy}$. Then the \emph{Hilbert distance} between $x$ and $y$ is defined to be
\begin{align*}
\dist_{\Omega}(x,y) = \frac{1}{2}\log [a, x,y, b]
\end{align*}
 where 
 \begin{align*}
 [a,x,y,b] = \frac{\abs{x-b}\abs{y-a}}{\abs{x-a}\abs{y-b}}
 \end{align*}
 is the projective cross ratio. It is a complete $\Aut(\Omega)$-invariant proper metric on $\Omega$ generating the standard topology on $\Omega$. Moreover, if $x, y \in \Omega$, the projective line segment  $[x,y]$  is a geodesic joining $x$ and $y$.
 
 For $x \in \Omega$ we will let 
 \begin{align*}
 \Bc_\Omega(x;r) := \{ y \in \Omega : \dist_\Omega(y,x) < r\}
 \end{align*}
 and for $A \subset \Omega$ we will let 
 \begin{align*}
 \Nc_\Omega(A;r) := \{ y \in \Omega : \dist_\Omega(y,A) < r\}.
 \end{align*}

Recall (from the introduction) that given a properly convex domain $\Omega \subset \Pb(\Rb^d)$ and $x \in \overline{\Omega}$ the open face of $x$ is 
\begin{align*}
F_\Omega(x) = \{ x\} \cup \left\{ y \in \overline{\Omega} : \text{ $\exists$ an open line segment in $\overline{\Omega}$ containing $x$ and $y$} \right\}.
\end{align*}
Given a subset $X \subset \overline{\Omega}$, we then define
\begin{align*}
F_{\Omega}(X):=\cup_{x \in X} F_{\Omega}(x).
\end{align*}

The following observations follow immediately from convexity and the definitions (also see \cref{sec:proof_of_obs_about_faces}).

\begin{observation}
\label{obs:faces} 
Suppose $\Omega \subset \Pb(\Rb^d)$ is a properly convex domain. 
\begin{enumerate}
\item $F_\Omega(x)$ is convex and open in its span,
\item $y \in F_\Omega(x)$ if and only if $x \in F_\Omega(y)$ if and only if $F_\Omega(x) = F_\Omega(y)$,
\item if $y \in \partial F_\Omega(x)$, then $F_\Omega(y) \subset \partial F_\Omega(x)$,
\item if $x, y \in \overline{\Omega}$, $z \in (x,y)$, $p \in F_{\Omega}(x)$, and $q \in F_{\Omega}(y)$, then 
\begin{align*}
(p,q) \subset F_\Omega(z).
\end{align*}
In particular, $(p,q) \subset \Omega$ if and only if $(x,y) \subset \Omega$.
\end{enumerate}
\end{observation}

Directly from the definition of the Hilbert metric one obtains the following. 

\begin{proposition}
\label{prop:dist_est_and_faces} Suppose $\Omega \subset \Pb(\Rb^d)$ is a properly convex domain, $(x_n)_{n \geq 1}$ is a sequence in $\Omega$, and $\lim_{n \to \infty} x_n =x \in \overline{\Omega}$. If $(y_n)_{n \geq 1}$ is another sequence in $\Omega$, $\lim_{n \to \infty} y_n =  y \in \overline{\Omega}$, and 
\begin{align*}
\liminf_{n \rightarrow \infty} \dist_\Omega(x_n,y_n) < + \infty,
\end{align*}
then $y \in F_\Omega(x)$ and 
\begin{align*}
\dist_{F_\Omega(x)}(x,y) \leq \liminf_{n \rightarrow \infty} \dist_\Omega(x_n,y_n).
\end{align*}
\end{proposition}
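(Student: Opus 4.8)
The plan is to pass to a subsequence realizing the liminf, extract limits of the chord endpoints, and then read off both conclusions from the limiting cross-ratio. First I would fix an affine chart $\mathbb{A}$ in which $\overline{\Omega}$ is compact and convex and write $\abs{\cdot}$ for a Euclidean norm on $\mathbb{A}$; all cross-ratios below are computed there. Set $L := \liminf_n \dist_\Omega(x_n,y_n)$ and, after passing to a subsequence, assume $\dist_\Omega(x_n,y_n) \to L$. If $x = y$ there is nothing to prove, since then $y = x \in F_\Omega(x)$ and $\dist_{F_\Omega(x)}(x,x) = 0 \le L$; so assume $x \neq y$. For each $n$ let $a_n,b_n \in \partial\Omega$ be the endpoints of the chord $\overline{x_ny_n}$, ordered $a_n,x_n,y_n,b_n$, so that $\dist_\Omega(x_n,y_n) = \tfrac12\log[a_n,x_n,y_n,b_n]$. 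Passing to a further subsequence I would assume $a_n \to a$ and $b_n \to b$ in $\overline{\Omega}$; since $x \neq y$ the chords converge to the line $\ell = \overline{xy}$, and taking limits in the ordering gives $a \preceq x \preceq y \preceq b$ along $\ell$.

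The first key step is to rule out collisions of the limiting endpoints with $x$ and $y$, i.e. to show $a \neq x$ and $b \neq y$, and hence (with $x \neq y$) the strict order $a \prec x \prec y \prec b$. This is exactly where the hypothesis $L < \infty$ is used: if $a = x$ then $\abs{x_n - a_n} \to 0$ while the remaining factors of $[a_n,x_n,y_n,b_n] = \frac{\abs{x_n-b_n}\abs{y_n-a_n}}{\abs{x_n-a_n}\abs{y_n-b_n}}$ stay bounded away from $0$ unless simultaneously $b = y$; in either case the cross-ratio blows up, forcing $\dist_\Omega(x_n,y_n) \to \infty$ and contradicting $L < \infty$. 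A symmetric argument gives $b \neq y$, and the degenerate possibility $a = b$ is excluded since it would force $x = y$. With the strict order in hand all four factors converge to nonzero finite limits, so $[a_n,x_n,y_n,b_n] \to [a,x,y,b] = e^{2L}$.

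From $a \prec x \prec y \prec b$ the membership $y \in F_\Omega(x)$ is immediate: the open segment $(a,b) \subset \overline{\Omega}$ contains both $x$ and $y$, which is precisely the defining condition. It then remains to bound $\dist_{F_\Omega(x)}(x,y)$. By Observation~\ref{obs:faces}(1), $F_\Omega(x)$ is properly convex and open in its span, so the chord $\ell \cap F_\Omega(x)$ is an open segment $(p,q)$ with $p,q \in \partial F_\Omega(x)$ and $\dist_{F_\Omega(x)}(x,y) = \tfrac12\log[p,x,y,q]$. Since $(a,b) \subseteq F_\Omega(x) \cap \ell = (p,q)$, the endpoints satisfy $p \preceq a \prec x \prec y \prec b \preceq q$. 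A direct monotonicity computation for the cross-ratio (pushing the endpoints outward only decreases it, i.e. domain-monotonicity of the Hilbert metric) then gives $[p,x,y,q] \le [a,x,y,b] = e^{2L}$, whence $\dist_{F_\Omega(x)}(x,y) \le L$, as desired.

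The main obstacle is the collision analysis in the second paragraph: the possible coincidences of $a,b$ with $x,y$ must be excluded, and it is there that $L < \infty$ genuinely enters. This step also explains why the conclusion is an inequality rather than an equality — the limiting endpoints $a,b$ need not be the endpoints $p,q$ of the chord of the face $F_\Omega(x)$ through $x$ and $y$ (they may lie in the relative interior of that face), which is exactly what permits $\dist_{F_\Omega(x)}(x,y)$ to be strictly smaller than $L$.
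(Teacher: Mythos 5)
Your proof is correct, and it is exactly the argument the paper has in mind: the paper states this proposition without proof, saying only that it follows ``directly from the definition of the Hilbert metric,'' and your chord-limit/cross-ratio verification --- passing to a subsequence realizing the liminf, excluding collisions of the limiting chord endpoints with $x$ and $y$ via blow-up of the cross-ratio (this is where $\liminf_n \dist_\Omega(x_n,y_n) < +\infty$ enters), and then using endpoint-monotonicity of the cross-ratio to pass from the limiting chord $(a,b)$ to the chord $(p,q)$ of $F_\Omega(x)$ --- is precisely that direct verification.
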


\subsection{The center of mass of a compact subset}

It is possible to define a ``center of mass'' for a compact set in a properly convex domain. Let $\Kc_d$ denote the set of all pairs $(\Omega, K)$ where $\Omega \subset \Pb(\Rb^d)$ is a properly convex domain and $K \subset \Omega$ is a compact subset. 

\begin{proposition}\label{prop:center_of_mass} There exists a function
\begin{align*}
(\Omega, K) \in \Kc_d \mapsto {\rm CoM}_\Omega(K) \in \Pb(\Rb^d)
\end{align*}
such that:
\begin{enumerate}
\item ${\rm CoM}_\Omega(K)  \in {\rm ConvHull}_\Omega(K)$, 
\item ${\rm CoM}_\Omega(K) = {\rm CoM}_\Omega({\rm ConvHull}_\Omega(K))$, and
\item if $g \in \PGL_d(\Rb)$, then $g{\rm CoM}_\Omega(K)={\rm CoM}_{g\Omega}(gK)$,
\end{enumerate}
for every $(\Omega, K) \in \Kc_d$. 
\end{proposition}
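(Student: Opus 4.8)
The plan is to reduce the three properties to a single \emph{intrinsic} construction and then to exploit the naturality of the Hilbert metric. First, since projective maps carry convex hulls to convex hulls, property (2) lets me reduce to defining ${\rm CoM}_\Omega(K)$ for the compact convex set $C = {\rm ConvHull}_\Omega(K)$, and I note that this $C$ in fact depends only on $K$ and not on the ambient $\Omega$ (the convex hull of a compact subset of an affine chart is intrinsic). The organizing observation is that every $g \in \PGL_d(\Rb)$ restricts to an isometry $(\Omega,\dist_\Omega) \to (g\Omega, \dist_{g\Omega})$ carrying $C$ to $gC$, because the cross-ratio is $\PGL_d(\Rb)$-invariant and hence $\dist_{g\Omega}(gx,gy) = \dist_\Omega(x,y)$. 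Consequently, if I define ${\rm CoM}_\Omega(K)$ by any recipe depending only on the isometry type of the pair $\big((\Omega,\dist_\Omega),\, C\big)$, then property (3) is automatic. So (3) is essentially free, and the entire problem becomes: produce an isometry-natural, single-valued assignment $C \mapsto {\rm CoM}_\Omega(K) \in C$.

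The key structural input for existence is that the relevant symmetry group is compact. Indeed, $H := \{ g \in \Aut(\Omega) : gC = C\}$ preserves the compact set $C \subset \Omega$, and since $\Aut(\Omega)$ acts properly on the proper geodesic metric space $(\Omega,\dist_\Omega)$, the group $H$ is relatively compact; let $G = \overline{H} \subset \Aut(\Omega)$, a compact group. Property (3) forces ${\rm CoM}_\Omega(K)$ to be a $G$-fixed point of $\relint(C)$, so I must first know such points exist. Since $G$ acts smoothly on $\relint(C) \cong \Rb^{\dim C}$, this follows from a fixed-point theorem for compact group actions on Euclidean space (average to obtain a $G$-invariant Riemannian metric and apply a Cartan/Smith-type fixed-point argument, or average a $G$-invariant inner product and take a barycenter in the round double cover).

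To obtain a canonical single point lying in $C$, I would use a canonical Riemannian model. The domain $\Omega$ carries a canonical $\Aut(\Omega)$-invariant complete Riemannian metric $g_\Omega$ that is natural under $\PGL_d(\Rb)$ — the affine (Vinberg--Hessian) metric, equivalently the Blaschke metric of the associated hyperbolic affine sphere — and this metric is determined by the convex, hence metric, structure of $(\Omega,\dist_\Omega)$, so it involves no choices. I then set ${\rm CoM}_\Omega(K)$ to be the circumcenter of $C$ for $g_\Omega$, i.e.\ the center of the smallest $g_\Omega$-ball containing $C$. By naturality of $g_\Omega$ this assignment is isometry-natural (giving (3) via the first paragraph), it is $G$-fixed, and a standard convexity argument places it in $\relint(C)$, giving (1); property (2) holds by construction.

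The main obstacle is \emph{uniqueness}. The canonical metric $g_\Omega$ is not nonpositively curved in general, so I cannot simply invoke uniqueness of circumcenters in $\CAT(0)$ spaces, and the raw Hilbert metric is even worse: its balls are convex, but $\dist_\Omega(\cdot,q)$ need not be convex along projective segments, so minimizing a distance energy need not have a unique minimizer. I expect this to be where the real work lies. I would resolve it either (i) by showing the smallest-enclosing-$g_\Omega$-ball center is unique using the specific behavior of $g_\Omega$ near $\partial\Omega$ together with the compactness of $G$, or (ii) by an inductive scheme: the set of candidate centers is a canonical, $G$-invariant, nonempty compact set, and on passing to the fixed-point locus of the identity component $G^0$ — a single projective subspace intersected with $C$ — the dimension strictly drops unless $G^0$ acts trivially, in which case an invariant-inner-product/spherical-circumcenter computation (using proper convexity to place each cap in an open hemisphere, where spherical circumcenters are unique) pins down the point; one then checks independence of the auxiliary inner product via the isotypic decomposition and Schur's lemma. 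Verifying that the final point is genuinely well defined and independent of all auxiliary choices is the crux of the argument.
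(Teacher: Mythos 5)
Your opening reductions are correct but they are the easy half of the problem: projective maps are Hilbert isometries, so \emph{any} choice-free recipe depending only on the pair $\big((\Omega,\dist_\Omega),C\big)$ is automatically equivariant, and fixed points of the compact stabilizer $G$ exist by simply averaging a vector of the cone over $\relint(C)$ with respect to Haar measure. The entire content of the proposition is to produce a \emph{single, well-defined} point, and that is exactly what your proposal leaves open: you define ${\rm CoM}_\Omega(K)$ as ``the'' circumcenter of $C$ for the affine (Blaschke) metric, concede that uniqueness of such a circumcenter is unknown (the metric is not $\CAT(0)$ in general), and defer the issue to two speculative repairs. Until one of them is carried out, there is no function at all, so the attempt has a genuine gap at its crux — and you say so yourself. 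A second, unacknowledged gap is property (1): the claim that ``a standard convexity argument'' places the circumcenter in $\relint(C)$ has no basis, because balls of the Blaschke metric need not be convex for the projective-linear structure; even for the Hilbert metric, whose balls \emph{are} convex, the set of circumcenters of $C$ is a nonempty compact convex set that is not obviously contained in $C$ (metric projection onto convex sets is not $1$-Lipschitz in Hilbert geometry, so the Euclidean argument does not transfer).

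Of your two repairs, (ii) fails at its terminal step. The induction stops precisely when $G^0$ acts trivially, which includes the generic case where the stabilizer $G$ of $C$ is trivial; then \emph{every} inner product is $G$-invariant, and the spherical circumcenter of the cap over a convex body genuinely depends on the inner product. Indeed, if it did not, the spherical circumcenter with respect to one fixed inner product would be equivariant under all of $\PGL_d(\Rb)$ — which is false (it is only $\PO(d)$-equivariant), and would by itself prove the proposition with no further work. Schur's lemma cannot rescue this: it controls invariant inner products only up to one scale per isotypic component, and for a trivial (or small finite) group the isotypic decomposition imposes essentially no constraint. So everything rests on repair (i), a uniqueness statement for smallest enclosing balls of the affine metric, for which you supply no argument and which is not a known general fact in dimension $\geq 3$. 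For comparison, the paper does not prove the proposition directly: it invokes the constructions of Lee \cite[Lemma 4.2]{L2014} and of Islam--Zimmer \cite[Proposition 4.5]{IZ2019}, the latter following Frankel's argument; those constructions are designed to produce a canonical point without ever needing a circumcenter-uniqueness statement in an auxiliary Riemannian metric, which is exactly the step your approach cannot yet supply.
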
 

\begin{proof} There are several constructions of such a center of mass, see for instance~\cite[Lemma 4.2]{L2014} or~\cite[Proposition 4.5]{IZ2019}. The approach in~\cite{IZ2019} is based on an argument of Frankel~\cite[Section 12]{Fra1989} in several complex variables. 
\end{proof} 

\subsection{The Hausdorff distance}

Recall that when $(X,\dist)$ is a metric space, the \emph{Hausdorff pseudo-distance} between two subsets $A,B \subset X$ is defined by 
\begin{align*}
\dist^{\Haus}(A,B) = \max \left\{ \sup_{a \in A} \inf_{b \in B} \dist(a,b), \ \sup_{b \in B} \inf_{a \in A} \dist(a,b) \right\}
\end{align*}
when $A$ and $B$ are both non-empty, and $\dist^{\Haus}(A,B) = \infty$ otherwise. 

The Hausdorff pseudo-distance is very useful when considering compact subsets: When $(X,\dist)$ is a complete metric space space,  $d^{\Haus}$ is a complete metric on the set of non-empty compact subsets of $X$. This pseudo-distance is less useful when dealing with closed sets, as the next example demonstrates. 

\begin{example} 
\label{ex:hauss_top_on_closed_sets_bad}
Consider $\Rb^2$ with the Euclidean distance. Let $B_n : = \overline{\Bc_{\Rb^2}((0,n); n)}$ be the closed ball of radius $n$ centered at $(0,n)$ and let $H: = \{ p=(x,y) \in \Rb^2 :  y \geq 0\}$ be the closed upper half plane. In any reasonable topology on closed sets one would like the sequence $B_n$ to converge to $H$. Unfortunately, with respect to the Hausdorff pseudo-distance one has, for all $n$,
$$
\dist^{\Haus}(B_n, H) = \infty.
$$
\end{example} 

\subsection{Local Hausdorff convergence topology}
In this section we recall a useful topology on the set of non-empty closed subsets of a metric space. This can be interpreted as a localization of the Hausdorff pseudo-distance that we discussed above. The topology we describe is a natural extension of the topology on compact subsets determined by the Hausdorff distance and has been used extensively in different areas of mathematics (e.g. see Hruska--Kleiner's~\cite{HK2005} work in $\CAT(0)$ geometry or Frankel's work in several complex variables~\cite{Fra1989}).

Let $\Cc(X)$ denote the set of all non-empty closed subset of a metric space $(X,\dist)$. For any $x \in X$ and $r>0$, we will denote the metric $r$-neighborhood of $x$ by 
\begin{align*}
\Bc_X(x;r) =\{ y\in X : \dist(x,y) < r\}.
\end{align*}
\begin{definition}
For a closed set $C_0 \subset X$, a base point $x_0 \in X$, and $r_0, \epsilon_0 > 0$ define the set $U(C_0,x_0,r_0,\epsilon_0)$ to consist of all closed subsets $C \subset X$ where
\begin{align*}
\dist^{\Haus}\Big(C_0 \cap \Bc_X(x_0;r_0),\, C \cap \Bc_X(x_0;r_0)\Big) < \epsilon_0.
\end{align*}
The \emph{local Hausdorff convergence topology on $\Cc(X)$} (induced by the metric $\dist$ on $X$) is the topology generated by the sets $U(\cdot, \cdot, \cdot, \cdot)$. 
\end{definition}
When the metric space $(X,\dist)$ is clear from context, we will often simply refer to this as the \emph{local Hausdorff topology induced by $\dist$} for brevity.

\begin{remark}
There are other well-known topologies on the space of non-empty closed subsets of a metric space, for instance the Chabauty topology \cite{IB2018,BC2013}. 
\end{remark}

\begin{example}
Assume the same set-up and notation as in \cref{ex:hauss_top_on_closed_sets_bad}. Then $B_n$ converges to $H$ in the local Hausdorff convergence topology on $\Cc(\Rb^2)$, see \cref{cor:convergence in the local Haus top ii} below. 
\end{example}

We note that when the metric space $(X,\dist)$ is proper, the local Hausdorff convergence topology is second countable. 

\begin{observation} 
If $(X,\dist)$ is a proper metric space, then the local Hausdorff convergence topology on $\Cc(X)$ is second countable. 
\end{observation} 

\begin{proof} Since $(X,\dist)$ is proper, it has a countable dense subset $A \subset X$. Fix an enumeration $\Qb \cap (0,\infty) = \{ r_n\}$. Then for each $n \in\Nb$ and $a \in A$, the set
$$
\Cc_{n, a} : =\{ K  : K \text{ compact and } K \subset \Bc_X(a;r_n)\}
$$
endowed with the Hausdorff distance is a compact metric space. Hence $\Cc_{n, a}$ has a countable dense subset $B_{n,a}$. Then 
$$
\{ U(C, a, r_n, m^{-1}) : a \in A, \ n, m \in \Nb, \ C \in B_{n,a} \}
$$
is a countable basis for the local Hausdorff convergence topology.
\end{proof} 

Based on the definition of the topology, one might expect that $C_n \rightarrow C$ if and only if 
 $$
 \lim_{n \rightarrow \infty} \dist^{\Haus}\Big( C_n \cap \Bc_{\Rb}(x_0;r), C \cap \Bc_{\Rb}(x_0; r) \Big) =0 
 $$ 
for all $x_0 \in X$ and $r > 0$. However, the next example demonstrates that one has to be careful with the choice of $x_0 \in X$ and $r > 0$. 

\begin{example*}
Consider $\Rb$ with the Euclidean distance. Let $C_n : = \{ 1/n\} \subset \Rb$ and $C := \{ 0\}$.  One can show that $C_n \to C$ in the local Hausdorff convergence topology (see Corollary~\ref{cor:convergence in the local Haus top ii}) however, if $x_0 = 1$ and $r = 1$, then 
$$
 \dist^{\Haus}\Big( C_n \cap \Bc_{\Rb}(x_0;r), C \cap \Bc_{\Rb}(x_0; r) \Big) =  \dist^{\Haus}\Big( C_n, \emptyset \Big) = \infty
 $$
 for all $n \geq 1$.  
\end{example*}

The next observation makes this naive characterization of convergence precise. 

\begin{observation}
\label{obs:convergence_in_loc_haus_top} 
Suppose $(X,\dist)$ is a proper metric space and $(C_n)_{n \geq 1}$ is a sequence of closed sets in $X$. Then, $C_n \rightarrow C$ in the local Hausdorff convergence topology if and only if 
$$
\lim_{n \rightarrow \infty} \dist^{\Haus}\Big( C_n \cap \Bc_X(x_0;r), C \cap \Bc_X(x_0; r) \Big) =0
$$
for all $x_0 \in X$ and $r > 0$ where $C \cap \Bc_X(x_0, r) \neq \emptyset$. 
\end{observation} 

\begin{proof} $(\Leftarrow)$: Fix $x_0 \in X$ and $r > 0$ such that $C \cap \Bc_X(x_0, r) \neq \emptyset$. Then fix $\epsilon > 0$. Since 
$$
C \in U(C, x_0, r, \epsilon),
$$
there exists $N \geq 1$ such that $C_n \in U(C, x_0, r, \epsilon)$ for all $n \geq N$. Then 
$$
\limsup_{n \rightarrow \infty} \dist^{\Haus}\Big( C_n \cap \Bc_X(x_0;r), C \cap \Bc_X(x_0; r) \Big)\leq \epsilon
$$
by the definition of $U(C,x_0, r, \epsilon)$. Since $\epsilon > 0$ was arbitrary, we see that 
$$
\lim_{n \rightarrow \infty} \dist^{\Haus}\Big( C_n \cap \Bc_X(x_0;r), C \cap \Bc_X(x_0; r) \Big) =0.
$$
$(\Rightarrow)$: Fix an open set $\Uc$, in the local Hausdorff convergence topology, that  contains $C$. Then by the definition of the topology,  there exist $x_0 \in X$ and $r_0, \epsilon_0 > 0$ such that 
$$
C \in U(C, x_0, r_0, \epsilon_0) \subset \Uc. 
$$
In particular, $C \cap \Bc_X(x_0; r_0) \neq \emptyset$. Thus, by hypothesis, 
$$
\lim_{n \rightarrow \infty} \dist^{\Haus}\Big( C_n \cap \Bc_X(x_0;r_0), C \cap \Bc_X(x_0; r_0) \Big) =0.
$$
Then for $n$ sufficiently large, we have
$$
C_n \in U(C, x_0, r_0, \epsilon_0) \subset \Uc.
$$
Thus $C_n \rightarrow C$.
\end{proof} 

As a corollary to this observation, we have the following. 

\begin{corollary}\label{cor:convergence in the local Haus top ii} Suppose $(X,\dist)$ is a proper metric space and $C_n \rightarrow C$ in the local Hausdorff convergence topology. If $p \in X$, then the following are equivalent: 
\begin{enumerate}
\item $p \in C$, 
\item there exists a sequence $(p_n)_{n \geq 1}$ in $X$ such that $p_n \in C_n$ for all $n$ and $p_n \rightarrow p$. 
\end{enumerate}
\end{corollary} 

\begin{proof} Fix $r > 0$ such that $C \cap \Bc_X(p, r) \neq \emptyset$. Then by Observation~\ref{obs:convergence_in_loc_haus_top}, 
$$
\lim_{n \rightarrow \infty} \dist^{\Haus}\Big( C_n \cap \Bc_X(p;r), C \cap \Bc_X(p; r) \Big) =0,
$$
which implies the desired equivalence. 
\end{proof}

Besides the properties mentioned above, the only other property of the local Hausdorff convergence topology we will use in this paper is the following. 

\begin{proposition}\label{prop:PES_closed}Suppose $\Omega \subset \Pb(\Rb^d)$ is a properly convex domain. Then the set of properly embedded simplices in $\Omega$ of dimension at least two is closed in the local Hausdorff convergence topology induced by $\dist_\Omega$. 
\end{proposition}

\begin{proof} This follows from Observation 3.20 in~\cite{IZ2019b}, but we provide a proof for the reader's convenience. 

Suppose $(S_n)_{n \geq 1}$ is a sequence of  properly embedded simplices in $\Omega$ of dimension at least two which converges to a closed subset $S$ in the local Hausdorff convergence topology induced by $\dist_\Omega$. Passing to a subsequence we can suppose that $\dim S_n = k$ for all $n$. 

Let $v_{1}^{(n)}, \dots, v_{k}^{(n)}$ be the vertices of $S_n$. Passing to a subsequence we can suppose that $v_{j}^{(n)} \rightarrow v_j$ for all $j$. To show that $S$ is a properly embedded simplex of dimension $k$ it suffices to show that 
\begin{enumerate}[(a)]
\item\label{item:dumb proof a} $v_1, \dots, v_k$ are linearly independent, 
\item\label{item:dumb proof b} $S = \relint {\rm ConvHull}_{\overline{\Omega}}(\{ v_1, \dots, v_k\})$, and
\item\label{item:dumb proof c} $\Omega \cap \Pb(\Spanset\{v_1,\dots, v_{j-1}, v_{j+1}, \dots, v_k\}) = \emptyset$ for all $j=1,\dots,k$.
\end{enumerate}

First we verify \ref{item:dumb proof c}. Since each $S_n$ is a properly embedded simplex, 
$$
\Omega \cap \Pb\left(\Spanset\left\{v_1^{(n)},\dots, v_{j-1}^{(n)}, v_{j+1}^{(n)}, \dots, v_k^{(n)}\right\}\right) = \emptyset
$$
for all $j =1,\dots, k$ and $n \geq 1$. So sending $n \rightarrow \infty$ and using the fact that $\Omega$ is open, we see that 
\begin{equation}\label{eqn:Omega does intersect the partial spans}
\Omega \cap \Pb(\Spanset\{v_1,\dots, v_{j-1}, v_{j+1}, \dots, v_k\}) = \emptyset
\end{equation}
for all $j=1,\dots,k$. This verifies~\ref{item:dumb proof c}.

Since each $S_n$ is a properly embedded simplex, 
$$
S_n = \relint {\rm ConvHull}_{\overline{\Omega}}\left(\left\{ v_1^{(n)}, \dots, v_k^{(n)}\right\}\right).
$$
So taking limits and using Corollary~\ref{cor:convergence in the local Haus top ii} we see that 
\begin{equation}\label{eqn:S is in the convex hull of the limit points}
S = \Omega\cap {\rm ConvHull}_{\overline{\Omega}}(\{ v_1, \dots, v_k\}).
\end{equation} 

Next we verify \ref{item:dumb proof a}. Suppose $v_1, \dots, v_k$ are not linearly independent. Then $v_j \in \Pb(\Spanset\{v_1,\dots, v_{j-1}, v_{j+1}, \dots, v_k\})$ for some $j$. Then using Equations~\eqref{eqn:Omega does intersect the partial spans} and~\eqref{eqn:S is in the convex hull of the limit points} we have 
$$
S \subset {\rm ConvHull}_{\overline{\Omega}}(\{ v_1, \dots, v_k\}) \subset \Pb(\Spanset\{v_1,\dots, v_{j-1}, v_{j+1}, \dots, v_k\}) \subset \Pb(\Rb^d) \setminus \Omega,
$$
which is a contradiction. Thus ~\ref{item:dumb proof a} is true. 

Finally we verify \ref{item:dumb proof b}. By Equation~\eqref{eqn:S is in the convex hull of the limit points}, it suffices to show that 
$$
\relint {\rm ConvHull}_{\overline{\Omega}}\left(\left\{ v_1, \dots, v_k\right\}\right) \subset \Omega. 
$$
Suppose not. Then there exists
$$
x \in \Big(\relint {\rm ConvHull}_{\overline{\Omega}}\left(\left\{ v_1, \dots, v_k\right\}\right)\Big) \setminus \Omega. 
$$
Since $\Omega$ is convex, there exists a projective hyperplane $H$ such that $x \in H$ and $H \cap \Omega = \emptyset$. Equation~\eqref{eqn:S is in the convex hull of the limit points} implies that $H$ intersects $\Pb(\Spanset\{v_1,\dots, v_k\})$ transversally, i.e. $\Pb(\Spanset\{v_1,\dots,v_k\}) \not\subset H$ (otherwise $S \subset \Pb(\Spanset\{v_1,\dots, v_k\}) \subset H \subset \Pb(\Rb^d)-\Omega$, a contradiction). 

On the other hand, $v_{j}^{(n)} \rightarrow v_j$, the lines $v_1, \dots, v_k$ are linearly independent, and $x \in \relint {\rm ConvHull}_{\overline{\Omega}}\{v_1,\dots,v_k\} \cap H.$ Thus the hyperplane $H$ non-trivially  intersects 
$$
S_n = \relint {\rm ConvHull}_{\overline{\Omega}}\left(\left\{ v_1^{(n)}, \dots, v_k^{(n)}\right\}\right)
$$
for $n$ large. This is impossible since $S_n \subset \Omega$ and thus  \ref{item:dumb proof b} is true. 
  \end{proof}

\subsection{Properly embedded simplices}

In this section we record some basic facts about properly embedded simplices in a properly convex domain. 

The following result is a simple consequence of any of the explicit formulas for the Hilbert metric on a simplex, see~\cite[Proposition 1.7]{N1988}, ~\cite{dlH1993}, or ~\cite{V2014}. 

\begin{proposition}\label{prop:quasi_isometric_simplex} If $\Omega \subset \Pb(\Rb^d)$ is a properly convex domain and $S \subset \Omega$ is a properly embedded simplex, then $(S, \dist_\Omega)$ is quasi-isometric to $\Rb^{\dim S}$. 
\end{proposition}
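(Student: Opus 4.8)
The plan is to reduce the statement to a computation of the \emph{intrinsic} Hilbert metric on $S$ and then invoke the explicit formula. The crucial point is that, because $S$ is properly embedded, the Hilbert metric $\dist_\Omega$ restricted to $S$ coincides with the intrinsic Hilbert metric $\dist_S$ obtained by viewing $S$ as a properly convex domain which is open in its own span $\Pb(\Spanset S)$. Once this is known, the result follows from the known model of the simplex as a normed space.

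First I would establish the metric identification $\dist_\Omega|_{S \times S} = \dist_S$. Fix distinct $x,y \in S$ and let $\ell$ be the projective line through them. Since $S$ is convex and open in its span, $\ell \cap \overline{S}$ is a closed segment $[a,b]$ with $a,b \in \partial S$, and because the relative-interior points $x,y$ lie between $a$ and $b$ we get $(a,b) \subset S$. Using the characterization of properly embedded sets recalled after Definition~\ref{defn:topology}, proper embeddedness gives $\partiali S \subset \partiali \Omega$; as $S$ is open in its span this says $\partial S \subset \partial \Omega$, so in particular $a,b \in \partial\Omega$. Combined with $[a,b]\subset \overline{S}\subset\overline{\Omega}$ and $(a,b)\subset S\subset\Omega$, the points $a,b$ are exactly the two points where $\ell$ meets $\partial\Omega$. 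Consequently the ordered quadruple $a,x,y,b$ and its cross ratio are the same whether computed in $\Omega$ or in $S$, so $\dist_\Omega(x,y)=\dist_S(x,y)$ for all $x,y\in S$.

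Next I would quote the explicit description of the Hilbert metric on a simplex. Writing points of $S$ in the standard coordinates for a simplex, the cited formulas (\cite{N1988,dlH1993,V2014}) show that the map $[x_1:\dots:x_{k+1}:0:\dots:0] \mapsto (\log x_1,\dots,\log x_{k+1}) + \Rb(1,\dots,1)$ is an isometry from $(S,\dist_S)$ onto the quotient $\Rb^{k+1}/\Rb(1,\dots,1)\cong\Rb^k$ equipped with the norm $\norm{u} = \tfrac12(\max_i u_i - \min_i u_i)$, where $k=\dim S$. Since any two norms on a finite-dimensional vector space are bi-Lipschitz equivalent, $(\Rb^k,\norm{\cdot})$ is quasi-isometric to Euclidean $\Rb^k$. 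Chaining this with the isometry of the previous paragraph shows $(S,\dist_\Omega)$ is quasi-isometric to $\Rb^{\dim S}$.

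The only substantive input is the proper embeddedness hypothesis, which is precisely what forces the endpoints of $\ell$ in $\overline{S}$ and in $\overline{\Omega}$ to coincide; the cross-ratio and norm computations are routine and contained in the cited references. The main (and mild) point requiring care is verifying that a chord between two relative-interior points of $S$ has its endpoints on $\partial S \subset \partial\Omega$ and does not degenerate into a lower-dimensional face of $S$, which is immediate from the convexity of $S$.
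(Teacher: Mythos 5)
Your proof is correct and follows essentially the route the paper intends: the paper simply cites the explicit formulas for the Hilbert metric on a simplex (\cite{N1988,dlH1993,V2014}), which is exactly the normed-space model you invoke. Your additional step identifying $\dist_\Omega|_{S\times S}$ with the intrinsic metric $\dist_S$ via proper embeddedness (i.e.\ $\partial S \subset \partial\Omega$, so chords of $S$ have the same boundary endpoints in both domains) is the correct way to fill in the detail the paper leaves implicit.
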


The faces of a properly embedded simplex are themselves properly embedded simplices in the boundary faces that contain them.

\begin{observation}\label{obs:faces_of_simplices_are_properly_embedded}
Suppose $\Omega \subset \Pb(\Rb^d)$ is a properly convex domain and $S \subset \Omega$ is a properly embedded simplex. If $x \in \partial S$, then 
\begin{enumerate}
\item $F_S(x)$ is a properly embedded simplex in $F_\Omega(x)$. 
\item $F_S(x) = \overline{S} \cap F_\Omega(x)$. 
\end{enumerate}
\end{observation}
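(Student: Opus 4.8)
The plan is to prove the two claims of Observation~\ref{obs:faces_of_simplices_are_properly_embedded} in the natural order: first establish the set equality $F_S(x) = \overline{S} \cap F_\Omega(x)$ in part (2), and then deduce from this (together with Proposition~\ref{prop:dist_est_and_faces} or the $\partiali$-characterization of proper embedding) that $F_S(x)$ is a properly embedded simplex in $F_\Omega(x)$, which is part (1). Throughout I would fix $x \in \partial S$ and work with an explicit normalization: after applying a $g \in \PGL_d(\Rb)$ I may assume $S$ is the standard open simplex $\{[x_1:\dots:x_{k+1}:0:\dots:0] : x_i > 0\}$ with $k = \dim S$. Then the faces $F_S(x)$ are transparent --- they correspond to the coordinate subsets --- so the real content is comparing the intrinsic face $F_S(x)$ (computed inside $\overline S$) with the face $F_\Omega(x)$ cut down to $\overline S$.

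For part (2), I would prove both inclusions. The inclusion $F_S(x) \subset \overline{S} \cap F_\Omega(x)$ is the easy direction: if $y \in F_S(x)$ then there is an open segment in $\overline S \subset \overline \Omega$ containing both $x$ and $y$, so by definition $y \in F_\Omega(x)$, and clearly $y \in \overline S$. The reverse inclusion $\overline{S} \cap F_\Omega(x) \subset F_S(x)$ is where proper embeddedness is essential. Suppose $y \in \overline S$ and $y \in F_\Omega(x)$, so there is an open segment $I \subset \overline\Omega$ with $x, y \in I$. I want to promote this to an open segment lying in $\overline S$. The key point is that $\partiali S \subset \partiali \Omega$ because $S$ is properly embedded; combined with Observation~\ref{obs:faces}(3) (if $y \in \partial F_\Omega(x)$ then $F_\Omega(y) \subset \partial F_\Omega(x)$) and part (2) of Observation~\ref{obs:faces} (the symmetry $y \in F_\Omega(x) \Leftrightarrow F_\Omega(x) = F_\Omega(y)$), one sees that $x$ and $y$ lie in a common face of $\Omega$ that is ``trapped'' inside $\overline S$. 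Concretely, in the standard-simplex normalization, membership of $y$ in both $\overline S$ and $F_\Omega(x)$ forces $y$ to have support on the same coordinate block as $x$, and then the segment through $x$ and $y$ within that coordinate block lies in $\overline S$, giving $y \in F_S(x)$.

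For part (1), I would use the characterization of proper embedding recorded just before Observation~\ref{obs:faces_of_simplices_are_properly_embedded}: for a properly convex set $B$, a subset $A \subset B$ is properly embedded if and only if $\partiali A \subset \partiali B$. Taking $A = F_S(x)$ and $B = F_\Omega(x)$, I would show $\partiali F_S(x) \subset \partiali F_\Omega(x)$ by taking a point $z \in \partiali F_S(x) = \partial F_S(x) \setminus F_S(x)$ and arguing that $z \notin F_\Omega(x)$; this follows from part (2), since $z \in \overline S$ and $z \in F_\Omega(x)$ would give $z \in F_S(x)$, a contradiction. That $F_S(x)$ is itself a simplex is immediate in the normalized coordinates, as the faces of a standard simplex are again standard simplices of lower dimension.

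The main obstacle I anticipate is the reverse inclusion in part (2): ruling out the possibility that $x$ and $y$ lie on a common open segment in $\overline\Omega$ that pokes outside $\overline S$, i.e. that the intrinsic face structure of $S$ is strictly coarser than what $\Omega$ sees. This is precisely where proper embeddedness ($\partiali S \subset \partiali\Omega$) must be used, and it is tempting but wrong to expect the inclusion without that hypothesis. I would handle it by combining the coordinate normalization with Observation~\ref{obs:faces}(4), which controls how faces interact with segments: if the segment through $x,y$ had an endpoint in $\Omega \setminus \overline S$ or in a face of $\Omega$ not meeting $\overline S$, then proper embeddedness of $S$ would be violated. Everything else reduces to bookkeeping in the standard-simplex chart.
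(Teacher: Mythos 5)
Your outline gets the easy parts right: the inclusion $F_S(x) \subset \overline{S} \cap F_\Omega(x)$ is immediate, and deducing part (1) from part (2) via the criterion $\partiali F_S(x) \subset \partiali F_\Omega(x)$ is a correct and complete reduction. (For reference, the paper does not prove this observation itself but cites \cite[Observation 5.4]{IZ2019b}.) The problem is that the entire content of the statement sits in the reverse inclusion of part (2), and there your proposal asserts the conclusion rather than proving it. The sentence ``membership of $y$ in both $\overline{S}$ and $F_\Omega(x)$ forces $y$ to have support on the same coordinate block as $x$'' is precisely what must be shown, and the mechanism you sketch does not show it. Of the two configurations you propose to exclude, the first (an endpoint of the segment lying in $\Omega \setminus \overline{S}$) is impossible for reasons unrelated to your dichotomy: an open segment in $\overline{\Omega}$ with an endpoint in $\Omega$ has its whole interior in $\Omega$, contradicting $x \in \partial S \subset \partial\Omega$. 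The second (an endpoint lying in a face of $\Omega$ disjoint from $\overline{S}$) is never justified and, even granted, is not exhaustive: the endpoint could lie outside $\overline{S}$ while its face still meets $\overline{S}$, a case your argument does not touch. The guiding heuristic is also false: $F_\Omega(x)$ need not be ``trapped inside $\overline{S}$''. For example, let $\Omega$ be an open triangular prism with bottom vertices $A_1,A_2,A_3$ and top vertices $B_1,B_2,B_3$, and let $S$ be the open triangle with vertices $A_1,A_2,B_3$; then $S$ is properly embedded, and for $x$ in the open edge $(A_2,B_3)$ the face $F_\Omega(x)$ is the relative interior of the rectangle with vertices $A_2,A_3,B_3,B_2$, which is far larger than $\overline{S} \cap F_\Omega(x) = (A_2,B_3)$.

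What is missing is the one lemma that converts proper embeddedness into the statement you want: setting $W = \Spanset(\overline{S})$, one has
\begin{align*}
\overline{\Omega} \cap \Pb(W) = \overline{S}.
\end{align*}
This holds because $S$ is open and of full dimension in $\Pb(W)$: if some $c \in \left(\overline{\Omega} \cap \Pb(W)\right) \setminus \overline{S}$ existed, then for $p \in S$ the open segment $(p,c)$ lies in $\Omega$ and in $\Pb(W)$, and the last point of $[p,c]$ belonging to the closed set $\overline{S}$ would lie in $\partial S \cap \Omega$ (it cannot lie in $S$ since $S$ is open in $\Pb(W)$, and it cannot equal $c$), contradicting $\partial S \subset \partial\Omega$. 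Once you have this, no coordinate bookkeeping is needed at all: if $y \in \overline{S} \cap F_\Omega(x)$, any open segment in $\overline{\Omega}$ containing $x$ and $y$ lies in the projective line through two points of $\Pb(W)$, hence in $\overline{\Omega} \cap \Pb(W) = \overline{S}$, which is by definition the statement $y \in F_S(x)$. I recommend restructuring the argument around this lemma; as written, your proposal correctly identifies where proper embeddedness must be used but never actually uses it.
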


\begin{proof} See for instance Observation 5.4 in~\cite{IZ2019b}. \end{proof}

\begin{definition}\label{defn:parallel} Suppose $\Omega \subset \Pb(\Rb^d)$ is a properly convex domain. Two properly embedded simplices $S_1, S_2 \subset \Omega$ are called \emph{parallel} if $\dim S_1 = \dim S_2$ and there is a labeling  $v_1,\dots, v_p$ of the vertices of $S_1$ and a labeling $w_1,\dots, w_p$ of the vertices of $S_2$ such that $F_\Omega(v_k) = F_\Omega(w_k)$ for all $1 \leq k \leq p$. 
\end{definition} 

The following lemma allows us to ``wiggle'' the vertices of a properly embedded simplex and obtain a new parallel properly embedded simplex. 

\begin{lemma}\label{lem:slide_along_faces} Suppose $\Omega \subset \Pb(\Rb^d)$ is a properly convex domain and $S \subset \Omega$ is a properly embedded simplex with vertices $v_1,\dots, v_p$. If $w_j \in F_\Omega(v_j)$ for $1 \leq j \leq p$, then 
\begin{align*}
S^\prime : = \Omega \cap \Pb(\Spanset\{ w_1,\dots, w_p\})=\relint {\rm ConvHull}_{\overline{\Omega}}(w_1,\dots,w_p)
\end{align*}
is a properly embedded simplex with vertices $w_1,\dots,w_p$. Moreover, 
\begin{align*}
\dist_\Omega^{\Haus}\left(S, S^\prime\right) \leq \max_{1 \leq j \leq p} \dist_{F_\Omega(v_j)}(v_j, w_j). 
\end{align*}
\end{lemma}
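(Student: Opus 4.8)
The plan is to lift everything to the cone over $\overline\Omega$ and argue with supporting functionals. Fix a proper convex cone $\Omega^+\subset\Rb^d$ with $\overline\Omega=\Pb(\Omega^+)$ and $\Omega=\Pb(\inte\Omega^+)$, let $\Omega^*=\{\xi:\langle\xi,z\rangle\ge0\ \forall z\in\Omega^+\}$ be its dual cone, and pick lifts $\tilde v_j,\tilde w_j\in\Omega^+$. I will use two standard facts: (a) if $\xi\in\Omega^*$ vanishes at a point $x\in\partial\Omega$, then $\xi$ vanishes on all of $\overline{F_\Omega(x)}$ (a face lies in every supporting hyperplane through one of its points); and (b) the Hilbert metric admits the dual description $\dist_\Omega(x,y)=\frac12\log\big(\sup_{\xi\in\Omega^*}\tfrac{\langle\xi,\tilde x\rangle}{\langle\xi,\tilde y\rangle}\big)\big(\sup_{\xi\in\Omega^*}\tfrac{\langle\xi,\tilde y\rangle}{\langle\xi,\tilde x\rangle}\big)$, which follows from the cross-ratio definition by taking the exit points of the chord through $x,y$ together with their supporting functionals.

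First I would show $S'$ is a properly embedded simplex with vertices $w_1,\dots,w_p$. Putting $S$ in standard position, each opposite closed face $\overline{S_{\hat j}}={\rm ConvHull}_\Omega(\{v_i:i\ne j\})$ is a proper face of $\overline S$, hence lies in $\partial\Omega$ by proper embeddedness. A supporting hyperplane of $\Omega$ through the smallest face of $\overline\Omega$ containing $\overline{S_{\hat j}}$ gives $\xi_j\in\Omega^*$ with $\langle\xi_j,\tilde v_i\rangle=0$ for $i\ne j$; since $S=\Omega\cap\Pb(\Spanset\{v_i\})\ne\varnothing$ we have $\Spanset\{v_i\}^{\perp}\cap\Omega^*=0$, so $\xi_j$ cannot annihilate all $\tilde v_i$ and thus $\langle\xi_j,\tilde v_j\rangle>0$. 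Using fact (a) and $v_i\in F_\Omega(w_i)$ (Observation~\ref{obs:faces}) one checks $\langle\xi_j,\tilde w_i\rangle=0$ for $i\ne j$ and $\langle\xi_j,\tilde w_j\rangle>0$. These relations force $w_1,\dots,w_p$ to be linearly independent, and force any $\sum_i b_i\tilde w_i\in\Omega^+$ to have every $b_i\ge0$; hence $\overline\Omega\cap\Pb(\Spanset\{w_i\})={\rm ConvHull}_\Omega(w_1,\dots,w_p)$. Moreover, by fact (a) any $\xi\in\Omega^*$ annihilating all $\tilde w_i$ annihilates all $\tilde v_i$ and so vanishes, giving $\Spanset\{w_i\}^{\perp}\cap\Omega^*=0$; therefore $[\sum_i t_i\tilde w_i]\in\Omega$ for $t_i>0$, and $S'=\Omega\cap\Pb(\Spanset\{w_i\})=\relint{\rm ConvHull}_\Omega(w_1,\dots,w_p)$ is a $(p-1)$-simplex with the claimed vertices. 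A boundary point of this simplex has some $b_k=0$, whence $\langle\xi_k,\cdot\rangle=0$ there, so it lies in $\partial\Omega$; thus $\partiali S'\subset\partiali\Omega$ and $S'$ is properly embedded.

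For the metric estimate I would balance the lifts: scale $\tilde v_j,\tilde w_j$ so that $\sup_{\xi\in\Omega^*}\langle\xi,\tilde v_j\rangle/\langle\xi,\tilde w_j\rangle=\sup_{\xi\in\Omega^*}\langle\xi,\tilde w_j\rangle/\langle\xi,\tilde v_j\rangle$. Each supremum is finite by fact (a) and records the Funk distance of $v_j,w_j$ inside the face $F_\Omega(v_j)$, so their (scaling-invariant) product is $e^{2\delta_j}$ with $\delta_j:=\dist_{F_\Omega(v_j)}(v_j,w_j)$; hence after balancing both equal $e^{\delta_j}$. Define the parallel map $\phi\colon S\to S'$ by $\phi\big([\sum_j t_j\tilde v_j]\big)=[\sum_j t_j\tilde w_j]$ for $t_j>0$, a bijection by the previous paragraph. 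The key point is the mediant inequality $\frac{\sum_j t_j a_j}{\sum_j t_j b_j}\le\max_j\frac{a_j}{b_j}$ for positive reals: applied with $a_j=\langle\xi,\tilde v_j\rangle$, $b_j=\langle\xi,\tilde w_j\rangle$ it gives, for every $\xi\in\Omega^*$,
\[
\frac{\langle\xi,\sum_j t_j\tilde v_j\rangle}{\langle\xi,\sum_j t_j\tilde w_j\rangle}\le\max_j\frac{\langle\xi,\tilde v_j\rangle}{\langle\xi,\tilde w_j\rangle}\le\max_j e^{\delta_j}=e^{\delta},
\]
where $\delta=\max_j\delta_j$ and fact (a) disposes of the indices with $\langle\xi,\tilde w_j\rangle=0$ (there the numerator also vanishes). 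Taking $\sup_\xi$, together with the symmetric bound obtained by exchanging $v$ and $w$, the dual formula (b) yields $\dist_\Omega(x,\phi(x))\le\delta$ for all $x\in S$. As $\phi$ is a bijection $S\to S'$, this bounds both one-sided Hausdorff distances, so $\dist_\Omega^{\Haus}(S,S')\le\delta$.

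The step I expect to be the main obstacle is obtaining the \emph{maximum} rather than the sum of the face distances. Moving one vertex at a time and invoking the triangle inequality for the Hausdorff metric only yields $\sum_j\delta_j$, and a direct cross-ratio estimate along $[x,\phi(x)]$ also loses a factor, since that chord exits $\partial\Omega$ only after each coordinate has already left its face. The sharp constant comes precisely from coupling the mediant inequality with the balanced normalization of the lifts: balancing forces both one-sided suprema to be controlled by the same $e^{\delta}$, so the symmetrization in (b) returns $\delta$ instead of a sum. The remaining technical points are verifying (b) from the cross-ratio definition and confirming that the suprema over $\Omega^*$ indeed compute the intrinsic Funk distances of the boundary faces.
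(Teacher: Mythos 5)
Your proof is correct. One structural remark before comparing: this paper does not actually prove Lemma~\ref{lem:slide_along_faces}; it cites Lemma 3.18 of~\cite{IZ2019b}, so the comparison is with that argument. The cited proof is, in essence, the primal version of yours: one fixes lifts $\widetilde v_j,\widetilde w_j$ normalized (``balanced'') inside the common face $F_\Omega(v_j)=F_\Omega(w_j)$ so that both $e^{\delta_j}\widetilde w_j-\widetilde v_j$ and $e^{\delta_j}\widetilde v_j-\widetilde w_j$ lie in the closed cone over $\overline{\Omega}$, where $\delta_j=\dist_{F_\Omega(v_j)}(v_j,w_j)$, and then sums these relations against the barycentric coefficients $t_j$ to conclude that the coordinate-preserving map $\left[\sum_j t_j\widetilde v_j\right]\mapsto\left[\sum_j t_j\widetilde w_j\right]$ displaces every point of $S$ by at most $\delta=\max_j\delta_j$. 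Your supporting-functional computation with the mediant inequality is exactly the dual reformulation of that convexity statement (equivalent by biduality of the closed cone), and your construction of the functionals $\xi_j$ to prove that $S'$ is a properly embedded simplex with vertices $w_1,\dots,w_p$ likewise matches the qualitative half of the cited argument. Both proofs turn on the same two ideas: the barycentric parallel map, and balancing lifts within faces so that linearity yields the maximum rather than the sum.

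On the two points you flag as remaining: fact (b) is the standard dual formula $\inf\{\lambda>0:\lambda\widetilde y-\widetilde x\in\Omega^+\}=\sup_{\xi\in\Omega^*\setminus\{0\}}\langle\xi,\widetilde x\rangle/\langle\xi,\widetilde y\rangle$ combined with the cross-ratio computation along the chord, so it is fine. More importantly, the \emph{equality} between the suprema over $\Omega^*$ and the intrinsic Funk distances of the face is never needed: for every $\xi\in\Omega^*$ with $\langle\xi,\widetilde w_j\rangle>0$, the ratio $\langle\xi,\widetilde v_j\rangle/\langle\xi,\widetilde w_j\rangle$ is bounded by the analogous supremum taken over the dual cone of $F_\Omega(v_j)$ inside its span, simply because every $\xi\in\Omega^*$ restricts to a functional nonnegative on the cone over $\overline{F_\Omega(v_j)}$; and those intrinsic suprema have product exactly $e^{2\delta_j}$ by fact (b) applied to the properly convex set $F_\Omega(v_j)$. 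This already bounds the scaling-invariant product of your two suprema by $e^{2\delta_j}$, so after balancing each is at most $e^{\delta_j}$, which is all the mediant step requires. (The equality you assert is also true, via the classical fact that $\overline{F_\Omega(x)}$ is the intersection of $\overline{\Omega}$ with the projective span of $F_\Omega(x)$, but you could simply delete that claim and keep only the inequality.)
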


\begin{proof} See for instance Lemma 3.18 in~\cite{IZ2019b}. \end{proof}

\section{Relative hyperbolic convex co-compact groups}

In this section we recall some properties of general relatively hyperbolic spaces/groups and also recall some of the results from~\cite{IZ2019b}.

\subsection{General relatively hyperbolic groups}
We define relative hyperbolic spaces and groups in terms of Dru{\c t}u and Sapir's tree-graded spaces (see~\cite[Definition 2.1]{DS2005}). 

\begin{definition} \ \begin{enumerate}
\item A complete geodesic metric space $(X,\dist)$ is \emph{relatively hyperbolic with respect to a collection of subsets $\Sc$} if all its asymptotic cones, with respect to a fixed non-principal ultrafilter, are tree-graded with respect to the collection of ultralimits of the elements of $\Sc$. 
\item A finitely generated group $G$ is \emph{relatively hyperbolic with respect to a family of subgroups $\{H_1,\dots, H_k\}$} if the Cayley graph of $G$ with respect to some (hence any) finite set of generators is relatively hyperbolic with respect to the collection of left cosets $\{g H_i : g \in G, i=1,\dots,k\}$. 
\end{enumerate}
\end{definition}

\begin{remark}These are one among several equivalent definitions of relatively hyperbolic spaces/groups, see~\cite{DS2005} and the references therein for more details. 

\end{remark}

If $(X,\dist)$ is a metric space, we will use the following notation for metric tubular neighborhoods: if $A \subset X$ and $r> 0$, then 
\begin{align*}
\Nc_X(A;r):=\left\{ x \in X : \dist(x,A)< r\right\}.
\end{align*}

We will frequently use the following property of relatively hyperbolic spaces. 

\begin{theorem}[{Dru{\c t}u--Sapir~\cite[Corollary 5.8]{DS2005}}]\label{thm:rh_embeddings_of_flats} Suppose $(X, \dist)$ is relatively hyperbolic with respect to $\Sc$. Then for any $A \geq 1$ and $B \geq 0$ there exists $M =M(A,B)$ such that: if $k \geq 2$ and $f : \Rb^k \rightarrow X$ is an $(A,B)$-quasi-isometric embedding, then there exists some $S \in \Sc$ such that 
\begin{align*}
f(\Rb^k) \subset \Nc_X(S;M).
\end{align*}
\end{theorem}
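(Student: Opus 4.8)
The plan is to reduce everything to the tree-graded structure of the asymptotic cones of $X$, exploiting the fact that $\Rb^k$ for $k \geq 2$ is ``wide''. Two facts about a space $\Fc$ that is tree-graded with respect to a collection of pieces will drive the argument (see~\cite[Section 2]{DS2005}): distinct pieces meet in at most one point, and — the consequence I will use repeatedly — any path-connected subset of $\Fc$ with no cut point is contained in a single piece. Since $\Rb^k$ with $k \geq 2$ is path connected with no cut point, and any bi-Lipschitz image of it is a homeomorphic copy with the same property, this is precisely the rigidity that will confine a quasiflat to a single peripheral.

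First I would establish the statement inside a single asymptotic cone. Fix a non-principal ultrafilter $\omega$, basepoints $e_n \in X$, and scales $d_n \to \infty$, and write $X_\omega$ for the resulting asymptotic cone. By the definition of relative hyperbolicity, $X_\omega$ is tree-graded with respect to the ultralimits of the elements of $\Sc$. Given an $(A,B)$-quasi-isometric embedding $f : \Rb^k \to X$ with $e_n = f(0)$, precomposing with the dilation $v \mapsto d_n v$ and passing to the ultralimit yields a well-defined map $f_\omega : \Rb^k \to X_\omega$ given by $f_\omega(v) = \lim_\omega f(d_n v)$; the uniform $(A,B)$-bounds pass to the limit and show that $f_\omega$ is an $A$-bi-Lipschitz embedding. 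As $k \geq 2$, the image $f_\omega(\Rb^k)$ has no cut point and hence lies in a single piece of $X_\omega$, that is, in an ultralimit $\lim_\omega S_n$ with $S_n \in \Sc$.

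To promote this to a uniform finite-scale bound $M = M(A,B)$, I would argue by contradiction. If no such $M$ existed, there would be $A \geq 1$, $B \geq 0$ and $(A,B)$-quasi-isometric embeddings $f_n : \Rb^k \to X$ with $f_n(\Rb^k) \not\subset \Nc_X(S;n)$ for every $S \in \Sc$. The decisive choice is to build the cone centered on the failure: take $e_n$ to be a point of $f_n(\Rb^k)$ realizing the escape and $d_n$ the corresponding escape radius, both tending to infinity. In the cone, the previous paragraph forces $f_\omega(\Rb^k)$ into one piece $P = \lim_\omega S_n$, and this piece contains the basepoint $f_\omega(0)$; but the escape condition says that $f_\omega(0)$ sits at a definite positive rescaled distance from every ultralimit of peripherals, contradicting $f_\omega(0) \in P$. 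When instead the flats stay within bounded distance of the peripheral family but ``transition'' between two different peripherals $S_n \neq S_n'$, I would feed the two witnessing points into the cone: they land in the single piece $P$ yet also in distinct peripheral ultralimits, which — since distinct pieces share at most one point — forces those ultralimits to coincide, again a contradiction.

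The main obstacle is exactly this de-coning step: converting ``in every asymptotic cone the flat lies in one piece'' into a single uniform constant, and in particular ruling out the transitional behavior where a quasiflat drifts from one peripheral to another. Controlling this requires the full package of tree-graded geometry from~\cite{DS2005} — the transversality of pieces and the resulting control on how quasigeodesics in $X$ interact with the peripheral subsets — together with a careful choice of scaling sequence that neither collapses the flat nor spuriously places its escape point inside a peripheral. This is the content of~\cite[Corollary 5.8]{DS2005}.
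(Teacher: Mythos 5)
This statement is quoted in the paper as an external result of Dru\c{t}u--Sapir (\cite[Corollary 5.8]{DS2005}); the paper gives no proof of it, so your proposal can only be measured against the actual argument in \cite{DS2005}. Your single-cone step is correct and is indeed the standard starting point: the ultralimit map $f_\omega$ is $A$-bi-Lipschitz, $\Rb^k$ with $k \geq 2$ has no cut points, and in a tree-graded space a path-connected subset without cut points lies in a single piece.

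The genuine gap is the de-coning step, and your own closing paragraph concedes it. Concretely: (i) the contradiction setup is ill-posed — negating the theorem gives, for each $n$ and \emph{each} $S \in \Sc$, a point of $f_n(\Rb^k)$ outside $\Nc_X(S;n)$, but this point depends on $S$; there is no single ``escape point'' $e_n$ or ``escape radius'' $d_n$, so the cone ``centered on the failure'' is not actually defined. (ii) Even after some choice of $(e_n,d_n)$, containment of $f_\omega(\Rb^k)$ in a piece $\lim_\omega S_n$ only controls $f_n$ on balls of radius $O(d_n)$ about $e_n$ up to error $o(d_n)$; the escape from $S_n$ may occur at distance from $e_n$ growing much faster than $d_n$, or at rescaled distance $n/d_n \rightarrow 0$ — in either case it is invisible in the cone, which is exactly the transition phenomenon you name. (iii) Your treatment of the transition case does not yield a contradiction: two distinct sequences $S_n \neq S_n'$ can a priori have coinciding ultralimits, so ``the ultralimits coincide'' is not absurd on its face; excluding it requires the uniform bounded-intersection property of peripherals (property $(\alpha_1)$ in \cite{DS2005}), and the equivalence of the asymptotic-cone definition of relative hyperbolicity with the properties $(\alpha_1)$--$(\alpha_3)$ is itself a major theorem of that paper, not a formal consequence of the definition used here. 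Since you ultimately resolve all of this by invoking \cite[Corollary 5.8]{DS2005} — the very statement to be proved — the proposal is circular at precisely the point where the real work lies.
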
 

\subsection{Convex co-compact relatively hyperbolic groups} Next we recall some of the results in~\cite{IZ2019b} describing the structure of (naive) convex co-compact groups which are relatively hyperbolic with respect to a collection of virtually Abelian subgroups of rank at least two. 

In the convex co-compact case we have the following characterization and structural results. 
 
\begin{theorem}[{\cite[Theorem 1.7 and 1.8]{IZ2019b}}]
\label{thm:structure_rel_hyp_cc} Suppose $\Omega \subset \Pb(\Rb^d)$ is a properly convex domain, $\Gamma \leq \Aut(\Omega)$ is convex co-compact, and $\Sc_{\simplexcc}$ is the family of all maximal properly embedded simplices in $\Cc_{\Omega}(\Gamma)$ with dimension at least two. Then the following are equivalent: 
\begin{enumerate}
\item $\Sc_{\simplexcc}$ is closed and discrete in the local Hausdorff convergence topology induced by $\dist_\Omega$,
\item $\Gamma$ is a relatively hyperbolic group with respect to a collection of virtually Abelian subgroups of rank at least two,
\item $(\Cc_\Omega(\Gamma), \dist_\Omega)$ is a relatively hyperbolic space with respect to $\Sc_{\simplexcc}$,
\item $(\Cc_\Omega(\Gamma), \dist_\Omega)$  is relatively hyperbolic with respect to a collection of properly embedded simplices of dimension at least two. 
\end{enumerate}
Moreover, when $\Sc_{\simplexcc}$ is closed and discrete in the local Hausdorff convergence topology induced by $\dist_\Omega$, then:
\begin{enumerate}[(a)]
\item $\Gamma$ has finitely many orbits in $\Sc_{\simplexcc}$. 
\item\label{item:cc_three} If $S \in \Sc_{\simplexcc}$, then $\Stab_{\Gamma}(S)$ acts co-compactly on $S$ and contains a finite index subgroup isomorphic to $\Zb^k$ where $k = \dim S$. 
\item\label{item:cc_four} If $\{S_1,\dots, S_m\}$ are representatives of the $\Gamma$-orbits in $\Sc_{\simplexcc}$, then $\Gamma$ is a relatively hyperbolic group with respect to $\{ \Stab_{\Gamma}(S_1), \dots, \Stab_{\Gamma}(S_m)\}$.
\item If $A \leq \Gamma$ is an infinite Abelian subgroup with rank at least two, then there exists a unique $S \in \Sc_{\simplexcc}$ with $A \leq \Stab_{\Gamma}(S)$. 
\item\label{item:cc_two} If $S \in \Sc_{\simplexcc}$ and $x \in \partial S$, then $F_{\Omega}(x) = F_S(x)$. 
\item If $S_1, S_2 \in \Sc_{\simplexcc}$ are distinct, then $\#(S_1 \cap S_2) \leq 1$ and $\partial S_1 \cap \partial S_2 = \emptyset$. 
\item For any $r > 0$ there exists $D(r) > 0$ such that: if $S_1, S_2 \in \Sc_{\simplexcc}$ are distinct, then 
\begin{align*}
\diam_\Omega \Big( \Nc_\Omega(S_1; r) \cap \Nc_\Omega(S_2; r) \Big) \leq D(r).
\end{align*}
\item\label{item:cc_one} If $\ell \subset \partiali \Cc_{\Omega}(\Gamma)$ is a non-trivial line segment, then there exists $S \in \Sc_{\simplexcc}$ with $\ell \subset \partial S$. 
\end{enumerate}
\end{theorem}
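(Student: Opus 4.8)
The plan is to establish the equivalences through the cycle $(3)\Rightarrow(4)\Rightarrow(1)\Rightarrow(3)$ together with the separate equivalence $(2)\Leftrightarrow(3)$, and then to read off the structural consequences (a)--(h) from the resulting tree-graded structure on $(\Cc_\Omega(\Lambda),\dist_\Omega)$ and the cocompactness of the $\Lambda$-action. The implication $(3)\Rightarrow(4)$ is immediate, since $\Sc_{\simplexcc}$ is itself a collection of properly embedded simplices of dimension at least two. For $(4)\Rightarrow(1)$ I would use Proposition~\ref{prop:quasi_isometric_simplex}: each $S\in\Sc_{\simplexcc}$ is quasi-isometric to $\Rb^{\dim S}$ with $\dim S\geq 2$, so the inclusion $S\hookrightarrow\Cc_\Omega(\Lambda)$ is a quasi-isometric embedding of a Euclidean flat. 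Theorem~\ref{thm:rh_embeddings_of_flats} then forces each such $S$ into a uniform neighborhood $\Nc_\Omega(T;M)$ of a peripheral simplex $T$ from the collection witnessing $(4)$. Maximality of $S$, together with the fact that $T$ is a properly embedded simplex of dimension at least two at bounded Hausdorff distance, should force $S$ and $T$ to be parallel in the sense of Definition~\ref{defn:parallel}. Closedness of $\Sc_{\simplexcc}$ then follows from Proposition~\ref{prop:PES_closed} (with a short argument that a limit of maximal simplices is again maximal), and discreteness from the standard isolation of peripheral subsets: a convergent sequence of distinct maximal simplices would produce distinct peripherals whose neighborhoods coarsely intersect in unbounded sets, contradicting the tree-graded structure.

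For $(2)\Leftrightarrow(3)$ I would invoke the Milnor--\v Svarc lemma: since $\Lambda$ acts properly discontinuously and cocompactly on $(\Cc_\Omega(\Lambda),\dist_\Omega)$, every orbit map is a quasi-isometry, and relative hyperbolicity transfers across it once the peripheral data are matched. The content is therefore to identify the peripheral subgroups with the stabilizers $\Stab_\Lambda(S)$ and to verify each is virtually $\Zb^{\dim S}$. This is where I would simultaneously obtain consequence~(b): discreteness of $\Sc_{\simplexcc}$ gives a compactness argument showing $\Stab_\Lambda(S)$ acts cocompactly on $S\cong\Rb^{\dim S}$; then Milnor--\v Svarc presents $\Stab_\Lambda(S)$ as a group acting geometrically on Euclidean space, and the Bieberbach theorem supplies a finite-index copy of $\Zb^{\dim S}$.

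The main obstacle is the implication $(1)\Rightarrow(3)$: assuming $\Sc_{\simplexcc}$ is closed and discrete, one must show that every asymptotic cone of $(\Cc_\Omega(\Lambda),\dist_\Omega)$ is tree-graded with respect to the ultralimits of the members of $\Sc_{\simplexcc}$. I expect the crux to be a purely convex-geometric input isolating flats: every quasi-isometrically embedded copy of $\Rb^k$ with $k\geq 2$ in $\Cc_\Omega(\Lambda)$ lies in a bounded neighborhood of some $S\in\Sc_{\simplexcc}$. Producing this requires the face machinery, namely Observation~\ref{obs:faces} and Proposition~\ref{prop:dist_est_and_faces} to extract, from a flat, a genuine simplex in $\overline{\Cc_\Omega(\Lambda)}$, and Lemma~\ref{lem:slide_along_faces} to position its vertices; by maximality this simplex is engulfed by a member of $\Sc_{\simplexcc}$. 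Granting the flat-isolation statement, verifying the tree-graded axioms in the cone reduces to two checks: distinct pieces meet in at most one point, which follows from discreteness of $\Sc_{\simplexcc}$ since no two maximal simplices are uniformly close and hence their cone images cannot share a segment; and every simple geodesic triangle lies in a single piece, which follows from the absence of flats off the simplices and the resulting hyperbolicity of the complementary geometry.

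Finally I would harvest the structural consequences. Consequence~(a) is cocompactness of the $\Lambda$-action applied to the center-of-mass map of Proposition~\ref{prop:center_of_mass} on simplices. Consequence~(e), that $F_\Omega(x)=F_S(x)$ for $x\in\partial S$, follows from maximality: by Observation~\ref{obs:faces_of_simplices_are_properly_embedded} the set $F_S(x)=\overline{S}\cap F_\Omega(x)$ is a properly embedded simplex in $F_\Omega(x)$, and a proper inclusion would let one enlarge $S$ via Lemma~\ref{lem:slide_along_faces}, a contradiction. Consequences~(f) and~(g) are the standard isolation properties of peripheral subsets, read directly off the tree-graded structure established in $(1)\Rightarrow(3)$. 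Consequence~(d) follows because a rank-at-least-two abelian subgroup yields a quasi-flat, which lies in a unique peripheral by Theorem~\ref{thm:rh_embeddings_of_flats} and~(f). For~(h), a non-trivial segment $\ell\subset\partiali\Cc_\Omega(\Lambda)$ places its endpoints in a common face, and cocompactness together with the orbit construction of $\Cc_\Omega(\Lambda)$ produces a two-dimensional properly embedded simplex with $\ell\subset\partial S$, which by maximality and Proposition~\ref{prop:PES_closed} is contained in a member of $\Sc_{\simplexcc}$.
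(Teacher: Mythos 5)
First, a point of comparison that matters for this review: the paper does not prove this theorem at all. It is imported verbatim from \cite[Theorems 1.7 and 1.8]{IZ2019b}, where its proof constitutes the bulk of that paper. So there is no internal proof to measure your proposal against; it has to be judged on whether it could plausibly be completed, and here there is a concrete gap.

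The gap is that your argument never uses convex co-compactness as opposed to \emph{naive} convex co-compactness, i.e.\ it never uses the fact that $\Cc_\Omega(\Lambda)$ is the convex hull of the full orbital limit set, which forces every boundary face of $\Omega$ meeting $\overline{\Cc_\Omega(\Lambda)}$ to lie entirely inside $\overline{\Cc_\Omega(\Lambda)}$. But the theorem is \emph{false} without that hypothesis: as the paper explains before Definition~\ref{defn:IS} (see also Observation 2.10 of \cite{IZ2019b}), a naive convex co-compact triple can carry a bounded, non-discrete family of parallel maximal properly embedded simplices while the group is still relatively hyperbolic with respect to rank-two virtually Abelian subgroups; this is precisely why Theorem~\ref{thm:main_char_ncc} replaces condition (1) by the existence of \emph{some} coarsely isolated family rather than discreteness of \emph{all} maximal simplices. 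Consequently your proof of $(4)\Rightarrow(1)$ breaks exactly where you wave at it: Theorem~\ref{thm:rh_embeddings_of_flats} only places each maximal simplex in a uniform neighborhood of \emph{some} peripheral $T$, and tree-gradedness cannot rule out infinitely many distinct maximal simplices accumulating near the same $T$ --- that is what actually happens in the naive counterexamples. Closing this hole requires proving that in $\Cc_\Omega(\Lambda)$ two parallel maximal simplices coincide, which needs the face-containment property above; any outline that would equally well ``prove'' the statement for naive triples cannot be correct. The same omission invalidates your derivations of (e) and (h), which are likewise false in the naive setting (compare Theorem~\ref{thm:ncc_rel_hyp} part (6), which only yields a bounded Hausdorff-distance version of (e)). Separately, your treatment of $(1)\Rightarrow(3)$ --- extract a simplex from a quasi-flat via Proposition~\ref{prop:dist_est_and_faces} and Lemma~\ref{lem:slide_along_faces}, then ``verify the tree-graded axioms'' --- compresses into two sentences what is the principal content of \cite{IZ2019b}; both the flat-isolation statement and the verification that the geometry away from the simplices is hyperbolic are substantial arguments, not corollaries of the cited preliminaries, so even apart from the gap above the proposal is an outline of a proof strategy rather than a proof.
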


In the naive convex co-compact case, we established a similar characterization and structural results. However, they are much more technical. The main issue is that there can exist bounded families of parallel properly embedded simplices, see for instance \cite[Section 2.3]{IZ2019b}. So the group being relative hyperbolic with respect to a family  of virtually Abelian subgroups of rank at least two is not equivalent to the set of \textbf{all} properly embedded simplices being closed and discrete. Instead, it is equivalent to the existence of a $\Gamma$-invariant family of properly embedded simplices which is closed, discrete, and which coarsely contains every properly embedded simplex. This is made precise in the next definition.

\begin{definition}[{\cite[Definition 1.11]{IZ2019b}}]
\label{defn:IS}
Suppose $(\Omega, \Cc, \Gamma)$ is a naive convex co-compact triple. A family $\Sc$ of maximal properly embedded simplices  in $\Cc$ of dimension at least two is called: 
\begin{enumerate}
\item \emph{Isolated}, if $\Sc$ is closed and discrete in the local Hausdorff convergence topology induced by $\dist_\Omega$.
\item \emph{Coarsely complete}, if any properly embedded simplex in $\Cc$ is contained in a uniformly bounded tubular neighborhood of some properly embedded simplex in $\Sc$.
\item \emph{$\Gamma$-invariant}, if $g \cdot S \in \Sc$ for all $S \in \Sc$ and $g \in \Gamma$.
\end{enumerate}
We say that $(\Omega, \Cc, \Gamma)$ has \emph{coarsely isolated simplices} if there exists an isolated, coarsely complete, and $\Gamma$-invariant family of maximal properly embedded simplices. 
\end{definition}

We then have the following characterization of relative hyperbolicity (with respect to a family  of virtually Abelian subgroups of rank at least two) in the naive convex co-compact case. 

\begin{theorem}[{\cite[Theorem 1.13]{IZ2019b}}]
\label{thm:main_char_ncc} Suppose $(\Omega, \Cc, \Gamma)$ is a naive convex co-compact triple. Then the following are equivalent: 
\begin{enumerate}
\item $(\Omega, \Cc, \Gamma)$ has coarsely isolated simplices,
\item $(\Cc, \dist_\Omega)$ is a relatively hyperbolic space with respect to a family of properly embedded simplices in $\Cc$ of dimension at least two,
\item $\Gamma$ is a relatively hyperbolic group with respect to a family  of virtually Abelian subgroups of rank at least two.
\end{enumerate}
\end{theorem}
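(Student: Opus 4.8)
The plan is to prove the cycle through the space-level statement: first establish $(2)\Leftrightarrow(3)$ by a Milnor--\v{S}varc / quasi-isometry-invariance argument, then close the loop with $(1)\Rightarrow(2)$ and $(2)\Rightarrow(1)$ via asymptotic cones and the flat-capturing Theorem~\ref{thm:rh_embeddings_of_flats}. Throughout, the template is the convex co-compact analog Theorem~\ref{thm:structure_rel_hyp_cc}; the genuinely new difficulty is the ``parallel simplices'' phenomenon that forces us to work with a coarsely complete family rather than with the set of all simplices. For $(2)\Leftrightarrow(3)$: since $\Lambda$ acts properly discontinuously and cocompactly by isometries on the complete geodesic space $(\Cc,\dist_\Omega)$, the Milnor--\v{S}varc lemma yields a $\Lambda$-equivariant quasi-isometry $\Lambda \to (\Cc,\dist_\Omega)$, and relative hyperbolicity in the asymptotic-cone sense is a quasi-isometry invariant by Dru\c{t}u--Sapir~\cite{DS2005}. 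The content is then matching peripherals to stabilizers: I would show that for each $S$ in the peripheral family, $\Stab_\Lambda(S)$ acts cocompactly on $S$, so its orbit is uniformly Hausdorff-close to $S$ and the quasi-isometry carries cosets of $\Stab_\Lambda(S)$ to uniform neighborhoods of translates of $S$. Since $S$ is quasi-isometric to $\Rb^{\dim S}$ by Proposition~\ref{prop:quasi_isometric_simplex}, a cocompact action forces $\Stab_\Lambda(S)$ to be virtually $\Zb^{\dim S}$, i.e.\ virtually Abelian of rank $\dim S \geq 2$. This is the naive analog of item~\eqref{item:cc_three} of Theorem~\ref{thm:structure_rel_hyp_cc}.

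For $(1)\Rightarrow(2)$: given an isolated, coarsely complete, $\Lambda$-invariant family $\Sc$ as in Definition~\ref{defn:IS}, I would show $(\Cc,\dist_\Omega)$ is relatively hyperbolic with respect to $\Sc$ by verifying that each asymptotic cone is tree-graded with respect to the ultralimits of the elements of $\Sc$. By Dru\c{t}u--Sapir's characterization this reduces to two points. First, distinct pieces meet in at most one point: I would derive from isolatedness, together with the wiggling Lemma~\ref{lem:slide_along_faces}, that distinct simplices in $\Sc$ have uniformly bounded coarse intersection (the naive analog of item~(g) of Theorem~\ref{thm:structure_rel_hyp_cc}), and a uniform bound on coarse intersections collapses to a single point in the cone. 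Second, every simple geodesic triangle lies in a single piece: here coarse completeness is essential, since it guarantees that every quasi-flat --- equivalently every non-hyperbolic feature surviving in the cone --- is trapped in a bounded neighborhood of some $S \in \Sc$, so that the part of $\Cc$ ``away from $\Sc$'' behaves hyperbolically. The face estimates of Proposition~\ref{prop:dist_est_and_faces} and Observation~\ref{obs:faces_of_simplices_are_properly_embedded} are the Hilbert-geometry inputs that make these two verifications run.

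For $(2)\Rightarrow(1)$: assume $(\Cc,\dist_\Omega)$ is relatively hyperbolic with respect to some family of properly embedded simplices of dimension at least two, and let $\Sc_{\simplexcc}$ be the collection of \emph{all} maximal properly embedded simplices in $\Cc$ of dimension at least two. This family is automatically $\Lambda$-invariant and is closed in the local Hausdorff topology by Proposition~\ref{prop:PES_closed}. It is coarsely complete: every properly embedded simplex of dimension at least two is quasi-isometric to a Euclidean flat by Proposition~\ref{prop:quasi_isometric_simplex}, so Theorem~\ref{thm:rh_embeddings_of_flats} places it in a uniform neighborhood of a peripheral, and the peripherals are themselves simplices lying in $\Sc_{\simplexcc}$. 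Finally $\Sc_{\simplexcc}$ is discrete: if distinct maximal simplices accumulated in the local Hausdorff topology, the limit would force arbitrarily large coarse intersections between distinct members, contradicting the uniformly bounded coarse intersection of distinct peripheral subsets in a relatively hyperbolic space. Thus $\Sc_{\simplexcc}$, or a $\Lambda$-invariant coarsely complete subfamily of it, witnesses that $(\Omega,\Cc,\Lambda)$ has coarsely isolated simplices.

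I expect the main obstacle to be $(1)\Rightarrow(2)$, and within it the triangle-capturing condition: translating coarse completeness into genuine hyperbolicity of the complement of the simplices requires controlling how Hilbert geodesics interact with the boundary faces, and the naive setting adds the complication that a whole bounded family of parallel simplices may shadow a single flat. Lemma~\ref{lem:slide_along_faces} is precisely what lets me absorb such a parallel family into a bounded neighborhood of one representative in $\Sc$, so that the family collapses to a single piece in the asymptotic cone; making this absorption uniform across all parallel families is the technical heart of the argument and the feature that distinguishes the naive case from the convex co-compact one treated in Theorem~\ref{thm:structure_rel_hyp_cc}.
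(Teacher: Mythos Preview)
The paper does not prove this theorem; it is quoted from \cite{IZ2019b}, so there is no in-paper proof to compare against. However, the discussion surrounding the statement in Section~3 flags exactly the two places where your sketch breaks.

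For $(1)\Rightarrow(2)$: you propose to deduce, from mere isolatedness of $\Sc$, that distinct simplices in $\Sc$ have uniformly bounded coarse intersection, and then conclude that $(\Cc,\dist_\Omega)$ is relatively hyperbolic with respect to $\Sc$ itself. The paper explicitly warns that both steps can fail: an isolated family need not be strongly isolated, and there exist families $\Sc$ satisfying Definition~\ref{defn:IS} for which $(\Cc,\dist_\Omega)$ is \emph{not} relatively hyperbolic with respect to $\Sc$ (see the paragraphs following Theorem~\ref{thm:main_char_ncc} and the reference to \cite[Observation~2.10]{IZ2019b}). The route actually taken in \cite{IZ2019b}, recorded here as Theorems~\ref{thm:ncc_strong_IS} and~\ref{thm:ncc_rel_hyp}, is to first upgrade $\Sc$ to a strongly isolated family; only then does the tree-graded verification go through. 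Lemma~\ref{lem:slide_along_faces} alone does not bridge this gap.

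For $(2)\Rightarrow(1)$: you argue that the family $\Sc_{\simplexcc}$ of all maximal simplices is discrete, because accumulation would force unbounded coarse intersection between distinct peripherals. But the accumulating simplices need not be peripherals of the given relatively hyperbolic structure; a bounded one-parameter family of parallel maximal simplices, all lying in a tubular neighborhood of a \emph{single} peripheral simplex, is consistent with (2) yet shows $\Sc_{\simplexcc}$ is not discrete. The paper states this directly: relative hyperbolicity is ``not equivalent to the set of all properly embedded simplices being closed and discrete.'' Your parenthetical ``or a $\Lambda$-invariant coarsely complete subfamily of it'' is the entire content of this implication; one must actually construct such a subfamily by collapsing each parallel class to a single representative. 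The center-of-mass map $\Phi$ in Section~\ref{sec:pf_of_ond_d_faces_ncc} does exactly this in the rank-two setting and illustrates what is needed in general.
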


The naive convex co-compact case has one more delicate point: if $(\Omega, \Cc, \Gamma)$ is a naive convex co-compact triple and $\Sc$ is a family of properly embedded simplices satisfying Definition~\ref{defn:IS}, then it is not always true that $(\Cc, \dist_\Omega)$ is relatively hyperbolic with respect to $\Sc$, see Observation 2.10 in \cite{IZ2019b} for examples.

Instead one requires an even stricter isolation property: we say that a family of simplices $\Sc$ in a properly convex domain $\Omega$ is \emph{strongly isolated}, if for every $r>0$, there exists $D(r) >0$ such that if $S_1, S_2 \in \Sc$ are distinct, then 
\begin{align*}
\diam_{\Omega}\left( \Nc_{\Omega}(S_1; r) \cap \Nc_{\Omega}(S_2; r)  \right) \leq D(r).
\end{align*}
It is straightforward to see that a \emph{strongly isolated} family of simplices is indeed \emph{isolated}. However, the converse is not true in general (see Section 2.3 - mainly Observation 2.10 - in \cite{IZ2019b}). But we proved in \cite{IZ2019b} that one can modify a coarsely isolated family of simplices to construct a strongly isolated family of simplices.

\begin{theorem}[{\cite[Theorem 1.17]{IZ2019b}}]
\label{thm:ncc_strong_IS}
Suppose $(\Omega, \Cc, \Gamma)$ is a naive convex co-compact triple with coarsely isolated simplices. Then there exists a strongly isolated, coarsely complete, and $\Gamma$-invariant collection of properly embedded simplices in $\Cc$ of dimension at least two.
\end{theorem}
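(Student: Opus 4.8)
The plan is to start from a coarsely isolated family $\Sc_0$ (isolated, coarsely complete, and $\Lambda$-invariant, as in Definition~\ref{defn:IS}) and to collapse each parallel class to a single canonical representative chosen by a center of mass. The observation driving the construction is that a strongly isolated family can contain \emph{at most one} simplex from each parallel class: if $S_1,S_2$ are distinct and parallel, then by Lemma~\ref{lem:slide_along_faces} their Hausdorff distance $\delta:=\dist_\Omega^{\Haus}(S_1,S_2)$ is finite, so $S_1 \subset \Nc_\Omega(S_1;\delta) \cap \Nc_\Omega(S_2;\delta)$, and this intersection has infinite diameter because $S_1$ is unbounded; taking $r=\delta$ already violates strong isolation. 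Thus $\Sc$ must pick out exactly one simplex per parallel class, and for coarse completeness to survive the collapse the parallel classes must be uniformly bounded in the Hausdorff metric. Establishing this boundedness is the first task.

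To prove boundedness I would first record that an isolated family is \emph{locally finite}: if infinitely many distinct $S_n \in \Sc_0$ met a fixed compact set $K$, a Blaschke-selection and diagonal argument would produce a nonempty local-Hausdorff limit (using Proposition~\ref{prop:PES_closed}), i.e.\ an accumulation point of $\Sc_0$, contradicting discreteness. Now suppose some parallel class $\mathcal{P}$ were unbounded. Coarse completeness provides, for each member of $\mathcal{P}$, a uniformly nearby simplex of $\Sc_0$ at finite Hausdorff distance and hence parallel to it (two properly embedded simplices at finite Hausdorff distance have coinciding vertex-faces), so $\Sc_0 \cap \mathcal{P}$ is coarsely dense in $\mathcal{P}$ and we obtain $T_n \in \Sc_0$, all parallel, with $\dist_\Omega^{\Haus}(T_0,T_n) \to \infty$. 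Choosing $g_n \in \Lambda$ so that the canonical interior point ${\rm CoM}_\Omega(T_n)$ lands in a fixed compact $K$ with $\Lambda K = \Cc$, local finiteness forces infinitely many of the $g_nT_n$ to coincide; comparing two such equalities produces, for a fixed simplex $T^\ast$, elements $h \in \Lambda$ with $hT^\ast$ parallel to $T^\ast$ yet $\dist_\Omega^{\Haus}(T^\ast,hT^\ast)$ arbitrarily large. I expect this to be the main obstacle: ruling out an unbounded orbit of parallel translates of a single simplex, which reduces to showing that the subgroup of $\Lambda$ permuting the vertex-faces of $T^\ast$ cannot move $T^\ast$ arbitrarily far inside the convex domain $\Omega \cap \Pb(\Spanset \overline{T^\ast})$ spanned by those faces.

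Granting uniform boundedness, the construction is clean. Fix a parallel class $\mathcal{P}$ with common vertex-faces $F_1,\dots,F_p$ (well defined by Definition~\ref{defn:parallel}). For each $i$ the set $K_i$ of $i$-th vertices of members of $\mathcal{P}$ is bounded in $\dist_{F_i}$, so $\overline{K_i}$ is compact in $F_i$; put $w_i := {\rm CoM}_{F_i}(\overline{K_i})$ and $S_{\mathcal{P}} := \Omega \cap \Pb(\Spanset\{w_1,\dots,w_p\})$. By Lemma~\ref{lem:slide_along_faces}, $S_{\mathcal{P}}$ is a properly embedded simplex parallel to the members of $\mathcal{P}$, of the same dimension $\ge 2$, and at uniformly bounded Hausdorff distance from each of them. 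Set $\Sc := \{ S_{\mathcal{P}} : \mathcal{P} \text{ a parallel class meeting } \Sc_0\}$. The equivariance ${\rm CoM}_{gF}(g\,K)=g\,{\rm CoM}_{F}(K)$ of Proposition~\ref{prop:center_of_mass}(3), together with the fact that $\Lambda$ acts by face-preserving projective maps and hence permutes parallel classes, gives $g S_{\mathcal{P}} = S_{g\mathcal{P}}$, so $\Sc$ is $\Lambda$-invariant. Coarse completeness of $\Sc$ follows from that of $\Sc_0$ and the uniform bound $\dist_\Omega^{\Haus}(S,S_{[S]})\le R_1$, while $\Sc$ is closed and discrete because it is obtained from the discrete family $\Sc_0$ by a uniformly bounded equivariant modification that leaves exactly one simplex per class.

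Finally I would establish strong isolation of $\Sc$. Fixing $r>0$ and arguing by contradiction from distinct pairs with $\diam_\Omega\!\big(\Nc_\Omega(S_1^n;r)\cap \Nc_\Omega(S_2^n;r)\big) \to \infty$, translating the overlaps into $K$ and invoking local finiteness and closedness of $\Sc$ reduces matters to a single pair of distinct simplices $T_1,T_2 \in \Sc$ whose neighborhood intersection has infinite diameter. An unbounded sequence $z_m$ in this intersection projects to points $p_m \in T_1$ and $q_m \in T_2$ with $\dist_\Omega(p_m,q_m) < 2r$; passing to limits and applying Proposition~\ref{prop:dist_est_and_faces} yields boundary points $\xi \in \partiali T_1$ and $\eta \in \partiali T_2$ lying in a common face of $\Omega$, so $T_1$ and $T_2$ share a boundary face. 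The crux is to show this is impossible for distinct members of $\Sc$: a shared boundary face forces the two classes either to be parallel, which is excluded since $\Sc$ contains one simplex per class, or to share a vertex-face, which the canonical center-of-mass choice together with the disjointness of boundaries of distinct maximal simplices (the analogue of Theorem~\ref{thm:structure_rel_hyp_cc}, to be established here for $\Sc$) must rule out. Verifying this exclusion — that collapsing parallel classes removes \emph{every} unbounded neighborhood overlap, not merely those arising from parallelism — is the heart of the argument.
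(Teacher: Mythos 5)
Your overall architecture---collapse each parallel class to a canonical representative built from the center of mass of Proposition~\ref{prop:center_of_mass}, then verify $\Lambda$-invariance, coarse completeness, and strong isolation---is exactly the strategy of the actual proof: the theorem is imported from \cite[Theorem 1.17]{IZ2019b}, whose proof constructs a family $\Sc_{\rm core}$ in precisely this way, and the same construction (the map $\Phi$ with vertices ${\rm CoM}_{F_\Omega(v_j)}(\partiali \Cc \cap F_\Omega(v_j))$) is reproduced in Section~\ref{sec:pf_of_ond_d_faces_ncc} of the present paper. The problem is that you have left the two steps that carry all of the mathematical content as announcements rather than proofs, and both are genuine gaps. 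The first is the uniform boundedness of parallel classes (equivalently, of $\diam_{F_\Omega(v)}\left(\partiali\Cc \cap F_\Omega(v)\right)$ over vertices $v$). You reduce this to showing that ``the subgroup of $\Lambda$ permuting the vertex-faces of $T^\ast$ cannot move $T^\ast$ arbitrarily far'' and then declare it the main obstacle; no argument is given, and the reduction points away from the tool that makes the statement true. This step cannot be settled by group-theoretic considerations about a stabilizer: the known proof exploits isolation of $\Sc_0$ together with co-compactness via a slide-and-translate limiting argument (compare Lemma~\ref{lem:faces_of_vertices}, whose analogue in \cite{IZ2019b} must derive its contradiction from discreteness of the family, since the coarse-dimension hypothesis used here is unavailable there). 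Note also that the simplices parallel to a given one form a continuum by Lemma~\ref{lem:slide_along_faces}, so local finiteness of $\Sc_0$ does not directly control a parallel class; it only controls the members of $\Sc_0$ near it, and the passage between the two is part of what must be proved.

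The second gap is strong isolation of the collapsed family $\Sc$. You correctly reduce an unbounded overlap $\Nc_\Omega(T_1;r)\cap\Nc_\Omega(T_2;r)$ to boundary points $\xi \in \partiali T_1$, $\eta \in \partiali T_2$ with $F_\Omega(\xi)=F_\Omega(\eta)$, but then assert that a shared face forces the simplices ``either to be parallel or to share a vertex-face,'' and that the latter ``must'' be excluded by the center-of-mass choice. Neither assertion is justified, and the first is not even formally correct at this level of generality: $\xi$ and $\eta$ may lie on edges, and two non-parallel maximal simplices could a priori each have an edge properly embedded in the same face $F_\Omega(\xi)$ without sharing any vertex-face. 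Ruling out this configuration is precisely where a substantive argument must enter---in the present paper it is the role of Proposition~\ref{prop:finding_two_d_faces} together with Lemmas~\ref{lem:lines_intersecting_faces} and~\ref{lem:bd_faces_of_triangles_intersecting}, which rely on the coarse-dimension hypothesis, while in \cite{IZ2019b} the isolation hypothesis must be used instead. Since isolated families that are not strongly isolated exist (\cite[Observation 2.10]{IZ2019b}), neither of these two steps can be routine bookkeeping: they are exactly where the hypotheses of the theorem are consumed, and your proposal acknowledges them (``the main obstacle,'' ``the heart of the argument'') without carrying them out.
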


We also proved the following.

\begin{theorem}[{\cite[Theorem 1.18 and 1.19]{IZ2019b}}]
\label{thm:ncc_rel_hyp} Suppose $(\Omega, \Cc, \Gamma)$ is a naive convex co-compact triple with coarsely isolated simplices. If $\Sc$ is a strongly isolated, coarsely complete, and $\Gamma$-invariant collection of properly embedded simplices in $\Cc$ of dimension at least two, then

\begin{enumerate}
\item $(\Cc, \dist_\Omega)$ is relatively hyperbolic with respect to $\Sc$.
\item If $S \in \Sc$, then $\Stab_{\Gamma}(S)$ acts co-compactly on $S$ and contains a finite index subgroup isomorphic to $\Zb^k$ where $k = \dim S$. 
\item $\Gamma$ has finitely many orbits in $\Sc$. 
\item\label{item:ncc_four} If $\{S_1,\dots, S_m\}$ are representatives of the $\Gamma$-orbits in $\Sc$, then $\Gamma$ is a relatively hyperbolic group with respect to $\{ \Stab_{\Gamma}(S_1), \dots, \Stab_{\Gamma}(S_m)\}$.
\item If $A \leq \Gamma$ is an Abelian subgroup with rank at least two, then there exists a unique $S \in \Sc$ with $A \leq \Stab_{\Gamma}(S)$. 
\item\label{item:ncc_six} There exists $D > 0$ such that: If $S \in \Sc$ and $x \in \partial S$, then 
\begin{align*}
\dist_{F_\Omega(x)}^{\Haus}\Big( \overline{\Cc} \cap F_{\Omega}(x), F_S(x) \Big) \leq D.
\end{align*} 
\item If $S_1, S_2 \in \Sc$ are distinct, then $\#(S_1 \cap S_2) \leq 1$ and 
\begin{align*}
\left(\bigcup_{x \in \partial S_1} F_\Omega(x) \right) \bigcap \left( \bigcup_{x \in \partial S_2} F_\Omega(x) \right)= \emptyset.
\end{align*}
\end{enumerate}
\end{theorem}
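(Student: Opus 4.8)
The plan is to treat part~(1) as the engine and to derive the remaining statements from it together with the co-compactness of the $\Lambda$-action and the strong isolation hypothesis. To prove that $(\Cc, \dist_\Omega)$ is relatively hyperbolic with respect to $\Sc$, I would verify that $\Cc$ is \emph{asymptotically tree-graded} with respect to $\Sc$, since by Dru{\c t}u--Sapir this is equivalent to relative hyperbolicity in the asymptotic-cone sense used here. Three inputs feed this. First, each $S \in \Sc$ is a convex subset of $\Cc$ that is quasi-isometric to a Euclidean flat by Proposition~\ref{prop:quasi_isometric_simplex}, so the pieces have the correct coarse geometry. Second, the strong isolation hypothesis is precisely the bounded coarse intersection axiom, furnishing for each $r$ a uniform bound on $\diam_\Omega(\Nc_\Omega(S_1;r) \cap \Nc_\Omega(S_2;r))$ over distinct $S_1, S_2 \in \Sc$. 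Third, coarse completeness guarantees that every quasi-flat in $\Cc$ is trapped in a bounded neighborhood of some $S \in \Sc$, so that after ``coning off'' the simplices the residual geometry is hyperbolic. The substantive work, and the main obstacle, is the remaining tree-graded axiom governing geodesics: one must show that a Hilbert geodesic fellow-travelling a piece for a long time essentially penetrates it, and that geodesic polygons not captured by a single piece are thin. This is where the projective geometry of the Hilbert metric and co-compactness enter, using the distance-and-faces estimate of Proposition~\ref{prop:dist_est_and_faces} to control how geodesics degenerate toward $\partial \Omega$.

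Granting part~(1), parts~(2) and~(3) are geometric group theory. For part~(3), fix a compact $K \subset \Cc$ with $\Lambda \cdot K = \Cc$; every $S \in \Sc$ meets $\Cc$, so some translate meets $K$, and the simplices meeting a fixed bounded neighborhood of $K$ form a discrete---hence finite---subfamily by isolation, giving finitely many $\Lambda$-orbits. For part~(2), I would first show $\Stab_\Lambda(S)$ acts co-compactly on $S$: since $\Lambda \cdot S$ is discrete and $\Lambda$ acts co-compactly on $\Cc \supset S$, the quotient of $S$ by $\Stab_\Lambda(S)$ is compact. As $\Lambda$ is discrete in $\PGL_d(\Rb)$ it acts properly on $\Omega$, so $\Stab_\Lambda(S)$ acts properly and co-compactly, i.e.\ geometrically, on $S$; composing with the quasi-isometry $S \simeq \Rb^{\dim S}$ of Proposition~\ref{prop:quasi_isometric_simplex} and invoking the Bieberbach theorem produces a finite-index $\Zb^{\dim S}$. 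Part~(4) then follows from the standard principle that a group acting geometrically on a space relatively hyperbolic with respect to an invariant family with finitely many orbits and co-compact stabilizers is itself relatively hyperbolic with respect to those stabilizers.

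For part~(5), an Abelian subgroup $A \leq \Lambda$ of rank at least two yields, via an orbit map into $\Cc$, a quasi-isometric embedding of $\Rb^{k}$ with $k \geq 2$; Theorem~\ref{thm:rh_embeddings_of_flats} places its image in a bounded neighborhood of some $S \in \Sc$, and strong isolation forbids a second simplex from coarsely containing the same flat, giving uniqueness. That $A$ preserves this unique $S$, so that $A \leq \Stab_\Lambda(S)$, follows because each $a \in A$ permutes $\Sc$ while displacing the flat a bounded amount. Parts~(6) and~(7) are the most delicate consequences. For~(7), the bound $\#(S_1 \cap S_2) \leq 1$ follows from convexity with strong isolation---two shared points would force a shared segment, hence an unbounded coarse intersection---and the disjointness of the associated unions of faces follows by translating, via Proposition~\ref{prop:dist_est_and_faces} and Observation~\ref{obs:faces_of_simplices_are_properly_embedded}, a common boundary face into a long coarse intersection of $S_1$ and $S_2$, again contradicting strong isolation. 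For~(6), Observation~\ref{obs:faces_of_simplices_are_properly_embedded} identifies $F_S(x) = \overline{S} \cap F_\Omega(x)$ as a properly embedded simplex in the face $F_\Omega(x)$, and I would bound the Hausdorff distance between $\overline{\Cc} \cap F_\Omega(x)$ and $F_S(x)$ uniformly by a compactness argument across the finitely many orbit types of pairs $(S,x)$: coarse completeness makes the bound finite on each type, and co-compactness promotes it to a single constant $D$.
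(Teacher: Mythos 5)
The first thing to note is that the paper contains no proof of this theorem: it is imported verbatim from the authors' earlier work \cite{IZ2019b} (Theorems 1.18 and 1.19), so your proposal is being measured against a result whose proof occupies a substantial portion of that other paper. Measured on its own terms, the proposal has a decisive gap at its foundation, part (1). You correctly identify the inputs (simplices are quasi-flats by Proposition~\ref{prop:quasi_isometric_simplex}, strong isolation is a bounded coarse-intersection condition, coarse completeness captures quasi-flats), but you then explicitly defer ``the substantive work'' --- verifying the tree-graded axioms for Hilbert geodesics and polygons away from the simplices --- to unspecified projective-geometric arguments. That deferred step \emph{is} the theorem: nothing in the three listed inputs formally implies asymptotic tree-gradedness, and the paper's own discussion surrounding Definition~\ref{defn:IS} (isolated vs.\ strongly isolated families, and the failure of relative hyperbolicity with respect to merely isolated families) shows how delicate this implication is. Since parts (3) and (4) are derived from (1), they inherit the gap.

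Several of the derived parts also contain concrete errors. In (2), Bieberbach's theorem applies to isometry groups of Euclidean space, not to groups acting geometrically on a space merely quasi-isometric to $\Rb^k$; one needs either quasi-isometric rigidity of $\Zb^k$ (a much deeper input) or the actual structure of the isometry group of the intrinsic Hilbert metric on a simplex, which is $\Rb^k$ extended by a finite group, after which a Bieberbach-style argument does work. In (5), your quasi-flat argument requires the orbit map of $A$ to be a quasi-isometric embedding, i.e.\ that $A$ is undistorted --- but that is precisely what is unproven; the standard repair goes through part (4) and the fact that infinite amenable, non-virtually-cyclic subgroups of relatively hyperbolic groups are conjugate into peripheral subgroups. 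In (7), two shared points of $S_1 \cap S_2$ force only a compact shared segment, which does \emph{not} contradict strong isolation; the correct argument uses proper embeddedness to extend that segment to a full chord of $\Omega$, which is unbounded in $\dist_\Omega$. Similarly, Proposition~\ref{prop:dist_est_and_faces} only bounds distances from \emph{below}, so it cannot convert a common boundary face into a long coarse intersection; what is needed is the opposite-direction fact that rays converging into the same open face remain boundedly close (cf.\ Lemma~\ref{lem:slide_along_faces}). Finally, in (6), the ``finitely many orbit types of pairs $(S,x)$'' compactness heuristic fails: for a fixed $S$ there is a continuum of boundary faces and $\Stab_\Lambda(S)$ does not act cocompactly on this family in any useful sense, and even the finiteness of $\Haus$-distance for a \emph{single} face requires a limiting argument that produces forbidden simplices, in the spirit of Lemma~\ref{lem:faces_of_vertices} of the present paper.
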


\section{Properties of coarse dimension}

In this section we make some basic observations about the coarse dimension (see Definition \ref{defn:cdim}).

\begin{observation}\label{obs:qdim_of_faces_PES} Suppose $\Omega \subset \Pb(\Rb^d)$ is a properly convex domain. If $S \subset \Omega$ is a properly embedded simplex, then 
\begin{align*}
\cdim_{F_\Omega(x)} \left( F_S(x) \right) = \dim F_S(x)
\end{align*}
for all $x \in \overline{S}$. 
\end{observation}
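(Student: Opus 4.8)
The plan is to prove the two inequalities $\cdim_{F_\Omega(x)}(F_S(x)) \le \dim F_S(x)$ and $\cdim_{F_\Omega(x)}(F_S(x)) \ge \dim F_S(x)$ separately. Throughout I write $D := F_\Omega(x)$, $T := F_S(x)$, and $k := \dim F_S(x)$. By Observation~\ref{obs:faces_of_simplices_are_properly_embedded}, $T$ is a properly embedded $k$-dimensional simplex in $D$; denote its vertices by $w_0,\dots,w_k \in \partial D$. The upper bound is immediate from Definition~\ref{defn:cdim}: $T$ is itself a $k$-dimensional convex subset of $D$ and $T \subset \Nc_D(T;1)$, so $\cdim_D(T) \le k$.

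For the lower bound I would argue by contradiction, assuming $T \subset \Nc_D(B;R)$ for some $R>0$ and some convex $B \subset D$ with $m := \dim B < k$, and then derive $m \ge k$. The first step produces, for each vertex $w_i$, a point of $\overline{B}$ lying in the same face of $D$ as $w_i$: since $T = \relint(\overline{T})$ is dense in $\overline{T} \ni w_i$, choose $p_n \in T$ with $p_n \to w_i$; as $p_n \in \Nc_D(B;R)$, pick $b_n \in B$ with $\dist_D(p_n,b_n) < R$ and pass to a subsequence so that $b_n \to b_i \in \overline{B}$. Then Proposition~\ref{prop:dist_est_and_faces} yields $b_i \in F_D(w_i)$ (it also gives $\dist_{F_D(w_i)}(w_i,b_i) \le R$, but only the containment is needed).

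The crux is to upgrade the points $b_0,\dots,b_k \in \overline{B}$ into a statement about the dimension of $B$. Here the key observation is that the containments $b_i \in F_D(w_i)$ are exactly the hypotheses of Lemma~\ref{lem:slide_along_faces}: sliding each vertex $w_i$ of the properly embedded simplex $T$ to $b_i$ within its face shows that $D \cap \Pb(\Spanset\{b_0,\dots,b_k\})$ is again a properly embedded simplex with vertices $b_0,\dots,b_k$. A simplex presented with $k+1$ vertices has dimension $k$, so the $b_i$ are projectively independent; hence $\Pb(\Spanset B) \supseteq \Pb(\Spanset\{b_0,\dots,b_k\})$ has dimension at least $k$, and since $\relint B$ is open in $\Pb(\Spanset B)$ we get $m = \dim \Pb(\Spanset B) \ge k$, the desired contradiction.

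I expect the main obstacle to be precisely this projective independence of the limit points $b_i$. A priori the nearest-point correspondence $T \to B$ could collapse the vertices of $T$ into a lower-dimensional configuration, and a naive comparison of growth of balls fails here because Hilbert metrics on convex sets can grow exponentially, so degree-of-growth does not separate $\Rb^k$ from a lower-dimensional convex set. The point is that Lemma~\ref{lem:slide_along_faces} sidesteps this entirely: the rigidity of the face containments $b_i \in F_D(w_i)$ forces the $b_i$ to inherit the independence of the $w_i$. I would also verify the routine boundary bookkeeping, namely that each $w_i \in \partial D$ and that $w_i$ may be approached from within $T$, both of which follow from $T$ being properly embedded in $D$.
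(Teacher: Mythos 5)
Your proposal is correct and is essentially the paper's own argument: both produce, for each vertex of $F_S(x)$, a limit point of $\overline{B}$ in that vertex's face via Proposition~\ref{prop:dist_est_and_faces}, and then invoke Lemma~\ref{lem:slide_along_faces} to conclude these points span a $k$-dimensional properly embedded simplex inside (the closure of) $B$, forcing $\dim B \geq k$. The only differences are cosmetic: the paper argues directly rather than by contradiction, and bounds $\dim B$ by noting the slid simplex lies in $\overline{B}$ rather than by passing through projective independence and $\Pb(\Spanset B)$.
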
 

\begin{proof} If $F_S(x)$ is a point, then there is nothing to prove. So we can assume $\dim F_S(x) = k > 0$. Then $F_S(x)$ is a properly embedded simplex in $F_\Omega(x)$ by Observation~\ref{obs:faces_of_simplices_are_properly_embedded}.

Suppose $D \subset F_{\Omega}(x)$ is a convex subsets with 
\begin{align*}
F_S(x) \subset \Nc_{F_\Omega(x)}(D;R)
\end{align*}
for some $R > 0$. Let $v_1,\dots, v_{k+1} \in \partial F_S(x)$ denote the vertices of $S$ in $\overline{F_S(x)}$.  Then by Proposition~\ref{prop:dist_est_and_faces} for each $j \in \{1,\dots, k+1\}$ there exists 
\begin{align*}
w_j \in F_\Omega(v_j) \cap \overline{D}. 
\end{align*}
By Lemma~\ref{lem:slide_along_faces}
\begin{align*} 
S^\prime : = \relint {\rm ConvHull}_{\overline{\Omega}}( w_1,\dots, w_{k+1}) \subset D
\end{align*} 
is a $k$-dimensional  properly embedded simplex in $F_{\Omega}(x)$ and so 
\begin{equation*}
\dim D  \geq \dim S^\prime \geq k = \dim F_S(x). \qedhere
\end{equation*}
 \end{proof}

\begin{observation}\label{obs:lower_bd_from_bd} Suppose $\Omega \subset \Pb(\Rb^d)$ is a properly convex domain and $\Cc \subset \Omega$ is a convex subset. If $\partiali\Cc \neq \emptyset$, then
\begin{align*}
\cdim_\Omega( \Cc) \geq 1 + \max_{x \in \partiali\Cc} \, \cdim_{F_\Omega(x)} \left( \partiali\Cc \cap F_\Omega(x) \right).
\end{align*}
\end{observation}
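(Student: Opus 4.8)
The plan is to bound $\cdim_\Omega(\Cc)$ from below by extracting, from a witnessing convex set for the coarse dimension of $\Cc$, a witnessing convex set of dimension exactly one smaller inside each ideal boundary face. Concretely, let $k = \cdim_\Omega(\Cc)$ and choose $R > 0$ together with a $k$-dimensional convex set $B \subset \Omega$ such that $\Cc \subset \Nc_\Omega(B; R)$. Fix $x \in \partiali\Cc$; since $x \in \partial\Omega$, the face $F_\Omega(x) \subset \partial\Omega$ is a boundary face, and by Observation~\ref{obs:faces} it is a properly convex domain that is open in its span, so $\cdim_{F_\Omega(x)}$ is defined. I will show
\begin{align*}
\cdim_{F_\Omega(x)}\left( \partiali\Cc \cap F_\Omega(x)\right) \leq \dim\left(\overline{B} \cap F_\Omega(x)\right) \leq k-1,
\end{align*}
and then take the maximum over $x \in \partiali\Cc$ to conclude.

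For the first inequality the key tool is Proposition~\ref{prop:dist_est_and_faces}. Given $y \in \partiali\Cc \cap F_\Omega(x)$, choose a sequence $y_n \in \Cc$ with $y_n \to y$. Since $\Cc \subset \Nc_\Omega(B;R)$, there exist $b_n \in B$ with $\dist_\Omega(y_n, b_n) < R$, and after passing to a subsequence $b_n \to b \in \overline{B}$. Proposition~\ref{prop:dist_est_and_faces} then forces $b \in F_\Omega(y) = F_\Omega(x)$ (using Observation~\ref{obs:faces}) and $\dist_{F_\Omega(x)}(y,b) \leq R$. Hence $b \in \overline{B} \cap F_\Omega(x)$ and
\begin{align*}
\partiali\Cc \cap F_\Omega(x) \subset \Nc_{F_\Omega(x)}\left(\overline{B} \cap F_\Omega(x); R+1\right).
\end{align*}
Taking $y = x$ shows $\overline{B}\cap F_\Omega(x)$ is nonempty, and as the intersection of two convex sets it is a convex subset of $F_\Omega(x)$; it therefore serves as a witnessing set, bounding the coarse dimension by its own dimension.

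For the second inequality I will use a purely convex-geometric fact: if $C \subset \overline{B}$ is convex and $C \cap \relint(\overline{B}) = \emptyset$, then $\dim C \leq \dim \overline{B} - 1$. Indeed, were $\dim C = \dim \overline{B}$ we would have $\Pb(\Spanset C) = \Pb(\Spanset \overline{B})$, so $\relint(C)$ would be a nonempty open subset of $\Pb(\Spanset \overline{B})$ contained in $\overline{B}$, hence contained in $\relint(\overline{B})$, contradicting disjointness. I apply this with $C = \overline{B} \cap F_\Omega(x)$: since $\relint(\overline{B}) = \relint(B) \subset \Omega$ while $F_\Omega(x) \subset \partial\Omega$, the intersection avoids $\relint(\overline{B})$, giving $\dim(\overline{B}\cap F_\Omega(x)) \leq k-1$.

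The main obstacle is the first inequality, which is where the geometric content lies: one must convert the hypothesis that $\Cc$ lies in a Hilbert $R$-neighborhood of $B$ measured in $\Omega$ into a containment of $\partiali\Cc \cap F_\Omega(x)$ in a Hilbert neighborhood measured in the intrinsic (and generally far smaller) metric of the face $F_\Omega(x)$. Proposition~\ref{prop:dist_est_and_faces} is precisely the mechanism legitimizing this passage to the limit, and the care needed is in verifying that the approximating points $b_n \in B$ accumulate at a point of $F_\Omega(x)$ itself, rather than escaping to a strictly lower-dimensional face of $\Omega$. The second inequality, by contrast, is an elementary consequence of the fact that $\relint(B)$ stays inside $\Omega$ while $F_\Omega(x)$ is an ideal boundary face.
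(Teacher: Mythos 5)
Your proof is correct and follows essentially the same route as the paper's: both take the witnessing convex set $B$ for $\cdim_\Omega(\Cc)$, intersect its closure with $F_\Omega(x)$, and use Proposition~\ref{prop:dist_est_and_faces} to show this intersection witnesses the coarse dimension in the face. The only difference is that you spell out the dimension-drop step $\dim\left(\overline{B} \cap F_\Omega(x)\right) \leq \cdim_\Omega(\Cc) - 1$, which the paper asserts without proof.
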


\begin{proof} Suppose $D \subset \Omega$ is a convex subset with $\dim D = \cdim_\Omega( \Cc)$ and 
\begin{align*}
\Cc \subset \Nc_{\Omega}(D;R)
\end{align*}
for some $R > 0$. Fix $x \in \partiali\Cc$ and let $D_x := \overline{D} \cap F_\Omega(x)$. Proposition~\ref{prop:dist_est_and_faces} implies that 
\begin{align*}
\partiali\Cc \cap F_\Omega(x)  \subset \Nc_{F_\Omega(x)}(D_x; R+1)
\end{align*}
and hence, by definition,  
$$\cdim_{F_{\Omega}(x)} \left( \partiali\Cc \cap F_{\Omega}(x) \right) \leq \dim D_x \leq -1+\dim D=-1+\cdim_\Omega( \Cc).$$

 \end{proof}

\begin{observation}\label{obs:lower_bd_from_three_different_faces} Suppose $\Omega \subset \Pb(\Rb^d)$ is a properly convex domain and $\Cc \subset \Omega$ is a convex subset. If there exist $x_1,x_2,x_3 \in \partiali\Cc$ such that $F_\Omega(x_1), F_\Omega(x_2), F_\Omega(x_3)$ are pairwise distinct, then 
\begin{align*}
\cdim_\Omega( \Cc) \geq 2.
\end{align*}
\end{observation}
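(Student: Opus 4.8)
The plan is to argue by contradiction. Suppose instead that $\cdim_\Omega(\Cc) \le 1$. Then by Definition~\ref{defn:cdim} there exist $R > 0$ and a convex subset $B \subset \Omega$ with $\dim B \le 1$ such that $\Cc \subset \Nc_\Omega(B; R)$. Since $B$ is convex of dimension at most one, its closure $\overline{B}$ is compact and is either a single point or a closed line segment $[a,b]$ with $a,b \in \overline{\Omega}$. Moreover, because $\relint(\overline{B}) = \relint(B) \subset B \subset \Omega$, the relative interior of the segment lies in $\Omega$, so $\overline{B} \cap \partial\Omega \subset \{a,b\}$ (and $\overline{B}\cap\partial\Omega = \emptyset$ in the point case). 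The idea is that $\Cc$, being coarsely one-dimensional, can ``reach'' at most two distinct boundary faces, contradicting the existence of three.

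The key step transports each ideal point $x_i$ into $\overline{B}$ via Proposition~\ref{prop:dist_est_and_faces}. Fix $i \in \{1,2,3\}$. Since $x_i \in \partiali\Cc \subset \overline{\Cc}$, choose a sequence $(p_n)_{n\ge 1}$ in $\Cc$ with $p_n \to x_i$. As $\Cc \subset \Nc_\Omega(B;R)$, for each $n$ pick $q_n \in B$ with $\dist_\Omega(p_n,q_n) < R$, and by compactness of $\overline{B}$ pass to a subsequence so that $q_n \to q_i \in \overline{B}$. Then $\liminf_n \dist_\Omega(p_n,q_n) \le R < \infty$, so Proposition~\ref{prop:dist_est_and_faces} yields $q_i \in F_\Omega(x_i)$, and hence $F_\Omega(x_i) = F_\Omega(q_i)$ by Observation~\ref{obs:faces}(2).

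Next I would use that the $x_i$ are genuine ideal points, i.e.\ $x_i \in \partial\Omega$, so that each $F_\Omega(x_i)$ is a proper boundary face of $\Omega$; this holds in the setting of interest, where $\Cc$ is relatively closed and hence $\partiali\Cc = \overline{\Cc}\cap\partial\Omega$. Since every interior point of $\Omega$ has face equal to $\Omega$, the relation $F_\Omega(q_i) = F_\Omega(x_i) \ne \Omega$ forces $q_i \notin \Omega$, i.e.\ $q_i \in \overline{B}\cap\partial\Omega \subset \{a,b\}$. Thus $q_1,q_2,q_3$ all lie in the set $\{a,b\}$ of cardinality at most two. On the other hand, the faces $F_\Omega(x_1), F_\Omega(x_2), F_\Omega(x_3) = F_\Omega(q_1), F_\Omega(q_2), F_\Omega(q_3)$ are pairwise distinct, so (equal points having equal faces) the points $q_1,q_2,q_3$ are pairwise distinct. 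Three pairwise distinct points cannot lie in a two-point set, a contradiction, so $\cdim_\Omega(\Cc) \ge 2$.

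The hard part is the third step: ruling out that some $q_i$ lands in the relative interior $(a,b)\subset\Omega$ of the segment, where the face would be all of $\Omega$. This is exactly where the hypothesis that $x_1,x_2,x_3$ are ideal boundary points (so their faces are proper) is essential. Indeed, if an interior point of $\Omega$ were allowed to appear in $\partiali\Cc$ (which happens for convex sets that are not relatively closed, e.g.\ a tubular neighborhood of a geodesic in the hyperbolic plane), then one could realize the three distinct faces $F_\Omega(a)$, $\Omega$, $F_\Omega(b)$ while still having $\cdim_\Omega(\Cc)=1$. The crux is therefore that three \emph{proper} pairwise-distinct faces must correspond to distinct endpoints of $\overline{B}$, of which there are only two.
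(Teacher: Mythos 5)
Your proof is correct and follows essentially the same route as the paper's: argue by contradiction, use Proposition~\ref{prop:dist_est_and_faces} to place a point of the closure of the one-dimensional convex set $B$ in each face $F_\Omega(x_i)$, and then pigeonhole against the at most two points where such a set can meet $\partial \Omega$. The subtlety you isolate --- that one needs $x_i \in \partial\Omega$, equivalently $F_\Omega(x_i) \neq \Omega$ --- is equally implicit in the paper's own proof, whose asserted inclusion $F_\Omega(x_j) \cap \overline{D} \subset \partiali D$ is valid only because $F_\Omega(x_j)$ is disjoint from $\Omega$; your hypercycle example correctly shows that some such hypothesis (e.g.\ $\Cc$ closed in $\Omega$, which holds in every application in the paper, where $\partiali \Cc = \overline{\Cc} \cap \partial \Omega$) is genuinely needed, so this is a gap in the stated generality of the observation rather than in your argument.
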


\begin{proof} Suppose not. Then there exists a convex subset $D \subset \Omega$ where $\dim D \leq 1$ and
\begin{align*}
\Cc \subset \Nc_{\Omega}(D;R)
\end{align*}
for some $R > 0$. Then by Proposition~\ref{prop:dist_est_and_faces} for each $j \in \{1,2,3\}$ there exists 
\begin{align*}
y_j \in F_\Omega(x_j) \cap \overline{D} \subset \partiali D
\end{align*}
By assumption $y_1,y_2,y_3$ are pairwise distinct. However, since $\dim D \leq 1$ the set $ \partiali D$ contains at most two points. So we have a contradiction. 

\end{proof}

Next we show that a certain configuration of points in the ideal boundary of a naive convex co-compact triple implies that the boundary contains a face with coarse dimension at least two. 

\begin{proposition}\label{prop:finding_two_d_faces} Suppose $(\Omega, \Cc, \Gamma)$ is a naive convex co-compact triple. If there exist distinct points $x, y_1,y_2, y_3 \in \partiali\Cc$ such that 
\begin{align*}
 [x, y_1]\cup [x,y_2] \cup [x,y_3] \subset \partial \Omega
\end{align*}
and 
\begin{align*}
(y_1,y_2) \cup (y_2,y_3) \cup (y_3, y_1) \subset \Omega,
\end{align*}
then there exists $w \in \partiali\Cc$ with 
\begin{align*}
\cdim_{F_\Omega(w)} \Big( \partiali \Cc \cap F_\Omega(w) \Big) \geq 2.
\end{align*}
\end{proposition}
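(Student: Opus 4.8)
The plan is to reduce the statement to the construction of a single three-dimensional properly embedded simplex inside $\Cc$, and then to build such a simplex from the given configuration by a renormalization argument. For the reduction, suppose we have produced a properly embedded $3$-simplex $S\subset\Omega$ with $\relint S\subset\Cc$ (so $\overline{S}\subset\overline\Cc$). Choose $w$ in the relative interior of one of the two-dimensional faces of $S$; then $w\in\partial S\subset\partial\Omega$ and $w\in\overline{S}\subset\overline\Cc$, so $w\in\partiali\Cc$. By Observation~\ref{obs:faces_of_simplices_are_properly_embedded}, $F_S(w)=\overline{S}\cap F_\Omega(w)$ is a two-dimensional properly embedded simplex in $F_\Omega(w)$, and since $F_S(w)\subset\partial S\subset\partial\Omega$ and $F_S(w)\subset\overline\Cc$ we have $F_S(w)\subset\partiali\Cc\cap F_\Omega(w)$. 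Then Observation~\ref{obs:qdim_of_faces_PES} together with the monotonicity of coarse dimension gives
\begin{align*}
\cdim_{F_\Omega(w)}\left(\partiali\Cc\cap F_\Omega(w)\right)\geq \cdim_{F_\Omega(w)}\left(F_S(w)\right)=\dim F_S(w)=2,
\end{align*}
as desired. Thus it suffices to produce $S$.

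First I would record the convex geometry of the hypothesis. Since $x,y_1,y_2,y_3\in\overline\Cc$ and $\Cc$ is convex, the simplex $\Delta:={\rm ConvHull}_\Omega\{x,y_1,y_2,y_3\}$ satisfies $\overline\Delta\subset\overline\Cc$; moreover every point of $\relint\Delta$ lies on a half-open segment from $x$ to an interior point of the base triangle, and such segments stay in $\Omega$, so $\relint\Delta\subset\overline\Cc\cap\Omega=\Cc$. The four points are projectively independent: were they coplanar, the boundary of the planar domain $\Omega\cap\Pb(\Spanset\{x,y_1,y_2,y_3\})$ would be a convex curve containing the three distinct segments $[x,y_1],[x,y_2],[x,y_3]$ emanating from the single boundary point $x$, which is impossible since a planar convex boundary admits at most two segment-directions at any point (collinearity of $x$ with two of the $y_i$ being ruled out by $(y_i,y_j)\subset\Omega$). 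Hence $\Delta$ is a genuine $3$-simplex and $\overline\Delta\cap\partial\Omega=[x,y_1]\cup[x,y_2]\cup[x,y_3]$, while the interiors of all four triangular faces and of the three base edges lie in $\Omega$. One also checks (via the planar argument, and Observation~\ref{obs:faces}) that $x$ is an extreme point up to the degenerate possibility that a single $y_i\in F_\Omega(x)$, a case which already exhibits a nontrivial face at $x$ and would be treated separately.

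To turn $\Delta$ into a properly embedded simplex I would use co-compactness. Fix $c\in\relint\Delta$ and take $p_n$ on the ray $[c,x)$ with $p_n\to x$. Since $\Lambda$ acts co-compactly on $\Cc$, there are $h_n\in\Lambda$ with $h_n p_n\to q$ for some $q\in\Cc$; passing to a subsequence, the vertices converge, $h_n x\to x_\infty$ and $h_n y_i\to\eta_i$ in $\overline\Omega$. Because $h_n\in\Aut(\Omega)$ we have $h_n\overline\Delta\subset\overline\Cc$ for all $n$, and one expects $h_n\Delta$ to converge to $S_\infty:={\rm ConvHull}_\Omega\{x_\infty,\eta_1,\eta_2,\eta_3\}\subset\overline\Cc$. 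As $q=\lim h_n p_n\in\Omega$ is a limit of interior points, $q\in\relint S_\infty\cap\Omega$, which, provided $S_\infty$ is non-degenerate, forces $\relint S_\infty\subset\Omega$. The edges $[x_\infty,\eta_i]=\lim h_n[x,y_i]$ lie in $\partial\Omega$ as limits of boundary segments, and since $\dist_\Omega(p_n,\relint B)\to\infty$ for the base triangle $B$ (by Proposition~\ref{prop:dist_est_and_faces}, because $\overline B$ meets $F_\Omega(x)$ in at most one point), the base and its edges are carried into $\partial\Omega$ in the limit.

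The main obstacle is to arrange that $S_\infty$ is simultaneously non-degenerate and genuinely properly embedded, that is, that \emph{all four} triangular faces of $S_\infty$ land in $\partial\Omega$. The difficulty concentrates on the three faces through the apex: these contain the vertex $x$, so they do not automatically recede from $p_n$ as $p_n\to x$, and a crude dilation centered at $x$ merely reproduces the tripod instead of flattening it. Overcoming this requires exploiting the projective dynamics of the escaping sequence $h_n\subset\Lambda$ (a recurrence and proximality analysis, using discreteness and co-compactness) to show the renormalization can be chosen so that the apex corner opens up and every two-face of $\Delta$ is pushed to the boundary, while $q$ stays interior so the limit does not collapse. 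Once $S_\infty$ is known to be a properly embedded $3$-simplex in $\Cc$, the reduction in the first paragraph finishes the proof. As an alternative to producing $S_\infty$ directly, one could instead show that $F_\Omega(x_\infty)$ has dimension at least two and that $\eta_1,\eta_2,\eta_3$ determine three distinct subfaces in the ideal boundary of $\overline\Cc\cap F_\Omega(x_\infty)$, so that Observation~\ref{obs:lower_bd_from_three_different_faces}, applied inside the properly convex domain $F_\Omega(x_\infty)$, yields $\cdim_{F_\Omega(x_\infty)}(\partiali\Cc\cap F_\Omega(x_\infty))\geq 2$.
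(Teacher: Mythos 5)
Your opening reduction (a properly embedded $3$-simplex in $\Cc$ would give the conclusion via Observation~\ref{obs:faces_of_simplices_are_properly_embedded}, Observation~\ref{obs:qdim_of_faces_PES}, and monotonicity of $\cdim$) is correct, but the proof has a genuine gap exactly where you flag the "main obstacle": you never construct that simplex, deferring it to an unspecified "recurrence and proximality analysis." Worse, this step is not merely missing; it cannot be carried out by the renormalization you propose. For any $a \in (y_i,y_j)$, the affinely parameterized segments $t \mapsto (1-t)c+tx$ and $t\mapsto (1-t)a+tx$ remain at uniformly bounded Hilbert distance from each other: both lie in the open cone with apex $x$ over a small neighborhood of $[c,a]$, this cone is contained in $\Omega$ and is preserved by the affine contractions $z \mapsto (1-s)z+sx$, so the distance between corresponding points in the cone metric (hence in $\dist_\Omega$) never exceeds its initial value. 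Consequently, if $h_n p_n \to q \in \Cc$, then after passing to a subsequence the tracked points on $(x,a)$ converge to a point $\hat a$ with $\dist_\Omega(q,\hat a) < \infty$, so $\hat a \in \Omega$ by Proposition~\ref{prop:dist_est_and_faces}, and $\hat a$ lies in the limit apex face ${\rm ConvHull}_\Omega(x_\infty, \eta_i,\eta_j)$. So in every such limit the three faces through the apex meet $\Omega$, and $S_\infty$ is never properly embedded. Note also that the existence of a properly embedded $3$-simplex in $\Cc$ is strictly stronger than the proposition's conclusion and is asserted nowhere in the paper; your strategy is premised on proving something that may simply be false under the hypotheses.

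The paper's proof exploits the very phenomenon that blocks your construction. It renormalizes along $u_n \in (x,u)$ with $u$ in the relative interior of the base triangle, and deliberately keeps tracking points $a_n \in (x,a)$, $b_n \in (x,b)$, $c_n \in (x,c)$ at bounded distance from $u_n$, so that in the limit $\hat a, \hat b, \hat c \in \Cc$ lie on the three apex triangles, while the whole base triangle ${\rm ConvHull}_\Omega(\hat y_1,\hat y_2, \hat y_3)$ is pushed into $\partiali \Cc$ (since $\dist_\Omega(u_n, \Omega \cap {\rm ConvHull}_\Omega(y_1,y_2,y_3)) \to \infty$). The witness is then a point $w$ in the relative interior of this limit base triangle, not the apex face $F_\Omega(x_\infty)$ that your fallback alternative suggests; there is no reason the $\eta_i$ lie in $\overline{F_\Omega(x_\infty)}$ (a segment $[\hat x, \eta_i] \subset \partial\Omega$ does not place $\eta_i$ in the closure of the face of $\hat x$, as the example of two vertices of a triangle shows). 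One checks that $(\hat x, w) \subset \Omega$, that $\hat y_1,\hat y_2,\hat y_3 \in \partial F_\Omega(w)$, and that $F_\Omega(\hat y_1), F_\Omega(\hat y_2), F_\Omega(\hat y_3)$ are pairwise distinct; this last point uses precisely the interior points $\hat a, \hat b, \hat c$: if $F_\Omega(\hat y_1) = F_\Omega(\hat y_2)$, then Observation~\ref{obs:faces} would force ${\rm ConvHull}_\Omega(\hat x, \hat y_1,\hat y_2) \subset \partial \Omega$, contradicting $\hat a \in \Omega$. Observation~\ref{obs:lower_bd_from_three_different_faces} then yields $\cdim_{F_\Omega(w)}\left( \partiali\Cc \cap F_\Omega(w)\right) \geq 2$, with no $3$-simplex needed anywhere.
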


\begin{proof} Fix $u \in \relint {\rm ConvHull}_{\overline{\Omega}}(y_1,y_2, y_3)$, $a \in (y_1,y_2)$, $b \in (y_2, y_3)$, and $c \in (y_3, y_1)$. Then we can find sequences $u_n \in (x,u)$, $a_n \in (x,a)$, $b_n \in (x,b)$, and $c_n \in (x,c)$ all converging to $x$ such that 
\begin{align*}
\dist_\Omega(u_n,a_n) \leq \dist_\Omega(u, a), \ \dist_\Omega(u_n,b_n) \leq \dist_\Omega(u, b), \text{ and }  \dist_\Omega(u_n,c_n) \leq \dist_\Omega(u, c)
\end{align*}
for all $n$. 

By passing to a subsequence we can find $\gamma_n \in \Gamma$ such that $\gamma_n u_n \rightarrow \hat{u} \in \Cc$ and 
\begin{align*}
\gamma_n a_n, \gamma_n b_n, \gamma_n c_n, \gamma_n x, \gamma_n y_1, \gamma_n y_2, \gamma_n y_3 \rightarrow \hat{a}, \hat{b},\hat{c},\hat{x}, \hat{y}_1, \hat{y}_2, \hat{y}_3.
\end{align*}
Then 
\begin{align*}
[\hat{x}, \hat{y}_1] \cup [\hat{x}, \hat{y}_2] \cup [\hat{x}, \hat{y}_3]  \subset \partiali\Cc
\end{align*}
and by our choice of sequences $\hat{a}, \hat{b},\hat{c} \in \Cc$. Also, since $u_n \rightarrow x \in \partial \Omega$, we have
\begin{align*}
\lim_{n \rightarrow \infty} \dist_\Omega\Big( u_n, \Omega \cap {\rm ConvHull}_{\overline{\Omega}}(y_1,y_2,y_3) \Big) = \infty
\end{align*}
and so 
\begin{align*}
{\rm ConvHull}_{\overline{\Omega}}\left(\hat{y}_1, \hat{y}_2, \hat{y}_3\right) \subset \partiali\Cc.
\end{align*}
 Fix $w \in \relint {\rm ConvHull}_{\overline{\Omega}}\left(\hat{y}_1, \hat{y}_2, \hat{y}_3\right) \subset \partiali\Cc$. 
 
  \medskip
\noindent \fbox{Claim 1:} $(\hat{x}, w) \subset \Omega$. 
  \medskip
 
 Since 
 \begin{align*}
 \hat{u} \in \Omega \cap {\rm ConvHull}_{\overline{\Omega}}\left(\hat{x}, \hat{y}_1, \hat{y}_2, \hat{y}_3\right)
 \end{align*}
 convexity implies that $(\hat{x}, w) \subset \Omega$. 
 
 \medskip
\noindent \fbox{Claim 2:} $\hat{y}_1, \hat{y}_2, \hat{y}_3 \in \partial F_\Omega(w)$. 
  \medskip
  
By construction, $\hat{y}_1, \hat{y}_2, \hat{y}_3 \in \overline{F_\Omega(w)}$. Fix $ j \in \{1, 2, 3\}$. Since $(\hat{x}, w) \subset \Omega$ and $[\hat{x}, \hat{y}_j] \subset \partial \Omega$, Observation~\ref{obs:faces} part (4) implies that $\hat{y}_j \notin F_\Omega(w)$. So $\hat{y}_j  \in  \partial F_\Omega(w)$. 

\medskip 
\noindent \fbox{Claim 3:}  $F_\Omega(\hat{y}_1),F_\Omega(\hat{y}_2),F_\Omega(\hat{y}_3)$ are pairwise distinct. 
 \medskip 

By symmetry it is enough to show that $F_\Omega(\hat{y}_1)$ and $F_\Omega(\hat{y}_2)$ are distinct. If not, then 
 \begin{align*}
[\hat{y}_1, \hat{y}_2] \subset F_\Omega(\hat{y}_2).
 \end{align*}
 Since $[\hat{x}, \hat{y}_1] \subset \partial \Omega$, Observation~\ref{obs:faces} part (4) then implies that 
 \begin{align*}
 {\rm ConvHull}_{\overline{\Omega}}\left(\hat{x}, \hat{y}_1, \hat{y}_2\right) \subset \partial\Omega. 
 \end{align*}
 However $\hat{a} \in \Cc \subset \Omega$ is contained in this convex hull and hence we have a contradiction. 

 \medskip 
\noindent \fbox{Claim 4:} $\cdim_{F_\Omega(w)} \Big( \partiali \Cc \cap F_\Omega(w) \Big) \geq 2$.
\medskip

This follows immediately from Claim 2, Claim 3, and Observation~\ref{obs:lower_bd_from_three_different_faces}.
\end{proof}

\section{Proof of Theorem~\ref{thm:ncc_GH}}\label{sec:pf_of_GH_ncc}

In this section we prove the following extension of Theorem~\ref{thm:ncc_GH}.

\begin{theorem}\label{thm:ncc_GH_2} Suppose $(\Omega, \Cc, \Gamma)$ is a naive convex co-compact triple. Then the following are equivalent: 
\begin{enumerate}
\item there exists $R > 0$ such that $\diam_{F_\Omega(x)}\left( \partiali\Cc \cap F_\Omega(x) \right)\leq R$ for all $x \in \partiali\Cc$,  
\item $\cdim_{F_\Omega(x)} \left( \partiali\Cc \cap F_\Omega(x) \right)=0$ for all $x \in \partiali\Cc$,  
\item $\Cc$ does not contain a properly embedded simplex with dimension at least two, 
\item $(\Cc, \dist_\Omega)$ is Gromov hyperbolic, 
\item $\Gamma$ is a word hyperbolic group.
\end{enumerate}
\end{theorem}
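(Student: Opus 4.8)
The plan is to prove the implications $(1)\Rightarrow(2)\Rightarrow(3)\Rightarrow(4)\Rightarrow(5)$ together with the two ``return'' implications $(4)\Rightarrow(3)$ and $(3)\Rightarrow(1)$, which together force all five statements to be equivalent. I would first dispose of the routine steps. For $(4)\Leftrightarrow(5)$: since $\Lambda$ acts properly discontinuously and cocompactly by isometries on the proper geodesic space $(\Cc,\dist_\Omega)$, the \v{S}varc--Milnor lemma makes the orbit map a quasi-isometry $\Lambda\to(\Cc,\dist_\Omega)$, and Gromov hyperbolicity is a quasi-isometry invariant. For $(1)\Rightarrow(2)$: if $\partiali\Cc\cap F_\Omega(x)$ has diameter at most $R$ then it lies in the $R$-ball about any one of its points, a $0$-dimensional convex set, so its coarse dimension is $0$. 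For $(2)\Rightarrow(3)$ I would argue contrapositively: a properly embedded simplex $S\subset\Cc$ with $\dim S\geq 2$ has a one-dimensional edge, and for $x$ in the relative interior of that edge Observation~\ref{obs:faces_of_simplices_are_properly_embedded} gives $F_S(x)=\overline{S}\cap F_\Omega(x)\subset\partiali\Cc\cap F_\Omega(x)$ while Observation~\ref{obs:qdim_of_faces_PES} gives $\cdim_{F_\Omega(x)}(F_S(x))=1$; by monotonicity $\cdim_{F_\Omega(x)}(\partiali\Cc\cap F_\Omega(x))\geq 1$, contradicting $(2)$.

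For $(3)\Rightarrow(4)$ I would use the machinery of~\cite{IZ2019b} with the empty family. If $\Cc$ contains no properly embedded simplex of dimension at least two, then the empty collection $\Sc=\emptyset$ vacuously satisfies the isolation, coarse-completeness, and $\Lambda$-invariance conditions of Definition~\ref{defn:IS}, so $(\Omega,\Cc,\Lambda)$ has coarsely isolated simplices and Theorem~\ref{thm:ncc_rel_hyp} applies with $\Sc=\emptyset$, showing $(\Cc,\dist_\Omega)$ is relatively hyperbolic with respect to $\emptyset$. Being relatively hyperbolic with respect to $\emptyset$ means every asymptotic cone is tree-graded with no pieces, i.e.\ is a real tree, which is exactly Gromov hyperbolicity. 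The converse $(4)\Rightarrow(3)$ is immediate: a properly embedded simplex of dimension $k\geq 2$ is quasi-isometric to $\Rb^k$ by Proposition~\ref{prop:quasi_isometric_simplex}, hence a quasi-isometrically embedded flat in $(\Cc,\dist_\Omega)$, and a Gromov hyperbolic geodesic space contains no such flat.

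It then remains to prove $(3)\Rightarrow(1)$, equivalently $\neg(1)\Rightarrow\neg(3)$: if $\diam_{F_\Omega(x)}(\partiali\Cc\cap F_\Omega(x))$ is unbounded over $x\in\partiali\Cc$, I must produce a properly embedded $2$-simplex in $\Cc$. I would choose faces $F_n=F_\Omega(x_n)$ and points $p_n,q_n\in\partiali\Cc\cap F_n$ with $\dist_{F_n}(p_n,q_n)\to\infty$; by convexity the entire segment $[p_n,q_n]$ lies in $\partiali\Cc\cap F_n\subset\partial\Omega$. Using cocompactness of the $\Lambda$-action on $\Cc$, I would select $\gamma_n\in\Lambda$ translating a suitably chosen interior ``center'' of the configuration into a fixed compact subset of $\Cc$ and pass to a subsequential limit, exactly the renormalization underlying Proposition~\ref{prop:finding_two_d_faces}; here Proposition~\ref{prop:dist_est_and_faces} and Observation~\ref{obs:faces} control which boundary faces the limit points occupy, and Lemma~\ref{lem:slide_along_faces} lets one realize limiting vertices as vertices of an actual simplex. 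I expect the crux to be the dimension-raising step: the growing in-face segments converge only to a single nondegenerate segment in $\partiali\Cc$ inside one boundary face, and one must show, in the spirit of Benoist's argument for divisible convex sets, that the cocompact dynamics forces a second, transverse boundary segment, so that the limit configuration spans a genuine two-dimensional properly embedded simplex rather than a mere edge.

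Assembling these pieces closes the loop: $(1)\Rightarrow(2)\Rightarrow(3)$, the equivalence $(3)\Leftrightarrow(4)$ from the empty-family argument and the quasi-flat obstruction, $(4)\Leftrightarrow(5)$ from \v{S}varc--Milnor, and the return $(3)\Rightarrow(1)$ from the renormalization, so that all five conditions are equivalent. I would isolate the easy implications and the appeals to~\cite{IZ2019b} first, leaving the renormalization producing a two-dimensional simplex as a single standalone step, since it is the only genuinely hard part of the argument.
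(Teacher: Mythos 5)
Most of your outline matches the paper's proof: $(1)\Rightarrow(2)$ by definition, $(2)\Rightarrow(3)$ via Observations~\ref{obs:faces_of_simplices_are_properly_embedded} and~\ref{obs:qdim_of_faces_PES} plus monotonicity of coarse dimension, $(4)\Rightarrow(3)$ via Proposition~\ref{prop:quasi_isometric_simplex}, and $(4)\Leftrightarrow(5)$ by \v{S}varc--Milnor. Where you genuinely diverge is the forward hyperbolicity implication: you prove $(3)\Rightarrow(4)$ by feeding the empty family into Definition~\ref{defn:IS} and Theorems~\ref{thm:main_char_ncc}/\ref{thm:ncc_rel_hyp}, then using that relative hyperbolicity with respect to $\emptyset$ means all asymptotic cones are real trees. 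This is legitimate (coarse completeness concerns only simplices of dimension at least two, so the empty family qualifies vacuously), but it leans on heavy machinery plus the nontrivial fact that a geodesic space whose asymptotic cones are all real trees is Gromov hyperbolic. The paper instead proves $(2)\Rightarrow(4)$ self-containedly: it takes a sequence of non-$\delta$-thin segment triangles, renormalizes by $\Lambda$, and uses Observation~\ref{obs:lower_bd_from_bd} to contradict coarse dimension zero, invoking only the elementary thin-triangles criterion of Proposition~\ref{prop:GH_suff}.

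The genuine gap is in $(3)\Rightarrow(1)$, which you correctly identify as the only hard step and then do not prove. Saying that ``the cocompact dynamics forces a second, transverse boundary segment, in the spirit of Benoist'' names the difficulty without resolving it, and the resolution is not a detail one can defer: it dictates \emph{which} point gets renormalized, so the rest of your sketch cannot even be set up without it. The paper's Lemma~\ref{lem:existence_of_2_simplices} proceeds as follows: normalize $x_n$ to be the Hilbert midpoint of $[a_n,b_n]$ in $F_\Omega(x_n)$, fix $p_0\in\Cc$, and prove the Claim that there exist $y_n\in[p_0,x_n)$ with
\[
\min\left\{ \dist_\Omega\big(y_n,[p_0,a_n)\big),\ \dist_\Omega\big(y_n,[p_0,b_n)\big)\right\} > n/2,
\]
which follows by contradiction from the lower semicontinuity in Proposition~\ref{prop:dist_est_and_faces}: points of $[p_0,x_n)$ staying within $n/2$ of, say, $[p_0,a_n)$ would force $\dist_{F_\Omega(x_n)}(x_n,a_n)\leq n/2$, contradicting the midpoint normalization. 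It is $y_n$ -- not an unspecified ``interior center'' of the configuration -- that must be translated into a compact set by $\gamma_n\in\Lambda$: the divergence $\dist_\Omega\big(y_n,[p_0,a_n)\cup[p_0,b_n)\big)\to\infty$ is precisely what forces \emph{both} limit segments $[p,a]$ and $[p,b]$, and not merely $[a,b]$, into $\partiali\Cc$, so that $a,b,p$ span a properly embedded two-dimensional simplex containing $y=\lim\gamma_n y_n$. Without this choice the renormalized limit can indeed collapse to a single boundary segment, which is exactly the failure mode you flag. So your proposal is correct in architecture, offers a valid alternative for $(3)\Rightarrow(4)$, but is incomplete at its one essential step.
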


By definition $(1) \Rightarrow (2)$, by Observation~\ref{obs:qdim_of_faces_PES} $(2) \Rightarrow (3)$, by Proposition~\ref{prop:quasi_isometric_simplex} $(4) \Rightarrow (3)$, and by the {\v S}varc-Milnor Lemma $(4) \Leftrightarrow (5)$. We will complete the proof by showing that $(3) \Rightarrow (1)$ and $(2) \Rightarrow (4)$.

In the convex co-compact case, it is well known that a line segment in the ideal boundary implies the existence of a properly embedded simplex. This is given explicitly in~\cite[Lemma 6.2]{DGF2017} using a proof nearly identical to~\cite[Proposition 2.5]{B2004} and \cite[Lemma 3.9]{B2006}. Unfortunately, simple examples show that this observation fails in the naive co-convex co-compact case (see Section 2.3 in~\cite{IZ2019b}). The next lemma uses Benoist's argument to establish a more technical condition to guarantee that the existence of a properly embedded simplex.

\begin{lemma}[$(3) \Rightarrow (1)$]\label{lem:existence_of_2_simplices} If $(\Omega, \Cc, \Gamma)$ is a naive convex co-compact triple and
\begin{equation}
\label{eqn:condition in the proof that (3) => (1)} 
\sup_{x \in \partiali\Cc} \diam_{F_\Omega(x)}\left( \partiali \Cc \cap F_\Omega(x) \right) =+\infty,
\end{equation}
then $\Cc$ contains a properly embedded two dimensional simplex. 
\end{lemma} 

\begin{remark} In the convex co-compact case, one can show that if $\partiali \Cc \cap F_\Omega(x) \neq \emptyset$, then  $F_\Omega(x) \subset \partiali\Cc$; see for instance \cite[Section 4]{DGF2017}. So in this special case, Equation~\eqref{eqn:condition in the proof that (3) => (1)} is equivalent to the condition that $\partiali\Cc$ contains a line segment. \end{remark}

\begin{proof} Fix a sequence $(x_n)_{n \geq 1}$ in $\partiali\Cc$ such that 
\begin{align*}
 \diam_{F_\Omega(x_n)}\left( \partiali \Cc \cap F_\Omega(x_n) \right) > n
 \end{align*}
 for all $n$. Then fix $a_n,b_n \in \partiali\Cc \cap F_\Omega(x_n)$ with 
 \begin{align*}
 \dist_{F_\Omega(x_n)}(a_n,b_n) > n.
 \end{align*}
 We can assume that $x_n$ is the midpoint of $[a_n, b_n]$ relative to the Hilbert distance $\dist_{F_{\Omega}(x_n)}$. Also fix some $p_0 \in \Cc$. 
 
 \medskip 
 
 \noindent \fbox{Claim:} For each $n$ there exists $y_n \in [p_0, x_n) \subset \Cc$ such that 
 \begin{align*}
\min\left\{ \dist_\Omega\Big(y_n, [p_0, a_n)\Big), \dist_\Omega\Big(y_n, [p_0, b_n)\Big) \right\}> n/2.
 \end{align*}
 
 Fix $n$ and suppose not. Then we can find $x_{n,m} \in [p_0, x_n)$, $a_{n,m} \in [p_0, a_n)$, and $b_{n,m} \in [p_0, b_n)$ such that $\lim_{m \rightarrow \infty} x_{n,m} = x_n$ and 
  \begin{align*}
\dist_\Omega\Big(x_{n,m},\{a_{n,m}, b_{n,m}\}\Big) \leq  n/2 \text{ for all $m$}. 
 \end{align*}
 By passing to a subsequence and possibly relabelling $a_n,b_n$ we can assume that 
   \begin{align*}
\dist_\Omega\Big(x_{n,m},\{a_{n,m}, b_{n,m}\}\Big)=\dist_\Omega\Big(x_{n,m},a_{n,m} \Big) \leq  n/2 \text{ for all $m$}. 
 \end{align*}
Then we must have $\lim_{m \rightarrow \infty} a_{n,m} = a_n$ and by Proposition \ref{prop:dist_est_and_faces} 
   \begin{align*}
n/2 \geq \limsup_{m \rightarrow \infty} \dist_\Omega(x_{n,m},a_{n,m}) \geq \dist_{F_\Omega(x_n)}(x_{n},a_{n}) > n/2.
 \end{align*}
 So we have a contradiction and hence the claim is established.

 Next let $(\gamma_n)_{n \geq 1}$ be a sequence in $\Gamma$ such that $\{ \gamma_n y_n : n \geq 1\}$ is relatively compact in $\Cc$. By passing to subsequences we can suppose that 
 \begin{align*}
 \gamma_ny_n, \gamma_n a_n, \gamma_n b_n, \gamma_n p_0 \rightarrow y, a, b, p \in \overline{\Cc}.
 \end{align*}
 Then $y \in \Cc$, by construction $[a,b] \subset \partiali\Cc$, and by the claim 
 \begin{align*}
[b,p] \cup [p,a] \subset \partiali\Cc.
 \end{align*}
 So $a,b,p$ are the vertices of a properly embedded simplex $S \subset \Cc$ which contains $y$. 

\end{proof}

To show that $(2) \Rightarrow (4)$ we will use the following sufficient condition for a metric to be Gromov hyperbolic. 

\begin{proposition}\label{prop:GH_suff} Suppose $(X,\dist)$ is a proper geodesic metric space, $\delta > 0$, and there exists a map 
\begin{align*}
(x,y) \in X \times X \mapsto \sigma_{x,y}  \in C\Big([0,\dist(x,y)],X\Big)
\end{align*}
where $\sigma_{x,y}$ is a geodesic joining $x$ to $y$. If for every $x,y,z \in X$ distinct, the geodesic triangle formed by $\sigma_{x,y}, \sigma_{y,z}, \sigma_{z,x}$ is $\delta$-thin, then $(X,\dist)$ is Gromov hyperbolic. 
\end{proposition}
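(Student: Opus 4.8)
The plan is to reduce the statement to the four-point (Gromov product) characterization of hyperbolicity, which refers only to the metric and is therefore insensitive to the non-uniqueness of geodesics in $X$. Recall that for $a,b,c \in X$ the Gromov product is $(b\mid c)_a = \tfrac{1}{2}\left( \dist(a,b) + \dist(a,c) - \dist(b,c)\right)$, and that a geodesic metric space is Gromov hyperbolic if and only if there is some $\delta' \geq 0$ with
\[
(x\mid z)_w \ \geq\ \min\left\{ (x\mid y)_w,\ (y\mid z)_w\right\} - \delta'
\]
for all $w,x,y,z \in X$ (this is the standard equivalence of the four-point condition with the thin-triangle condition for geodesic spaces, see e.g.\ Bridson--Haefliger or Ghys--de la Harpe). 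Since this inequality involves only four distances, it suffices to verify it for $(X,\dist)$; Gromov hyperbolicity then follows, and this is the only place where the geodesic-ness (and, if one prefers a properness-based characterization, the properness) of $X$ is used. The whole point is that I never need to control arbitrary geodesic triangles, only the specific triangles built from the given bicombing.

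First I would record two elementary comparisons between a Gromov product and a distance-to-side. (i) For any geodesic $\sigma$ joining $x$ to $z$ and any $w$, the triangle inequality together with additivity of length along $\sigma$ gives $(x\mid z)_w \leq \dist(w,\sigma)$. (ii) From the thinness of the bicombing triangle on $\{w,x,z\}$ I obtain the reverse bound up to an additive error: the internal point of the $w$-corner lies on the side joining $w$ to $x$ at distance exactly $(x\mid z)_w$ from $w$, and the standard insize estimate places it within $C\delta$ of the opposite side $\sigma_{z,x}$; hence $\dist(w,\sigma_{z,x}) \leq (x\mid z)_w + C\delta$. Thus the Gromov product $(x\mid z)_w$ agrees with $\dist(w,\sigma_{z,x})$ up to an additive constant multiple of $\delta$.

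Next I would establish the four-point inequality itself. Given $w,x,y,z$, apply thinness to the bicombing triangle on $\{x,y,z\}$: its side $\sigma_{z,x}$ lies in the $\delta$-neighborhood of $\sigma_{x,y}\cup\sigma_{y,z}$, so taking a nearest point $r\in\sigma_{z,x}$ to $w$ and a point $s\in\sigma_{x,y}\cup\sigma_{y,z}$ within $\delta$ of it yields $\min\{\dist(w,\sigma_{x,y}),\dist(w,\sigma_{y,z})\}\leq \dist(w,\sigma_{z,x})+\delta$. Combining this with (ii) applied to the $zx$-side and (i) applied to the $xy$- and $yz$-sides gives $(x\mid z)_w \geq \min\{(x\mid y)_w,(y\mid z)_w\} - (C+1)\delta$, which is the four-point condition with $\delta'=(C+1)\delta$. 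One small bookkeeping point: the distinguished side between $x$ and $z$ must be the same geodesic in both triangles used, but since the hypothesis supplies a $\delta$-thin triangle for every ordering of every triple, I may simply choose the orderings $(w,z,x)$ and $(x,y,z)$ so that both triangles share the side $\sigma_{z,x}$.

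The main obstacle is step (ii): converting thinness into the approximate equality $(x\mid z)_w \approx \dist(w,\sigma_{z,x})$. If the paper's notion of ``$\delta$-thin'' is the tripod/insize condition, this is immediate with $C=1$; if it is the slim-triangle version, it requires the standard but slightly delicate one-dimensional argument along the sides to locate where the two geodesics emanating from $w$ fellow-travel, ruling out the internal point being close only to the ``wrong'' side. Everything else is the triangle inequality plus the appeal to the equivalence of the four-point and thin-triangle definitions of hyperbolicity for geodesic spaces.
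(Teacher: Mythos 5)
Your proposal is correct and follows essentially the same route as the paper, which does not write out the argument but instead remarks that the statement is a ``straightforward and well known consequence of the Gromov product definition of Gromov hyperbolicity'' and cites \cite[Proposition 2.2]{Z2017} for details. Your reduction to the four-point condition via the comparisons $(x\mid z)_w \leq \dist(w,\sigma_{z,x}) \leq (x\mid z)_w + C\delta$ (using only thinness of the bicombing triangles) is exactly the intended argument, with the bookkeeping about shared sides and the slim-versus-tripod distinction handled appropriately.
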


\begin{proof}
This proposition is a straightforward and well known consequence of the  Gromov product definition of Gromov hyperbolicity, see for instance~\cite[Proposition 2.2]{Z2017} for a detailed proof. 
\end{proof}

\begin{lemma}[$(2) \Rightarrow (4)$] Suppose $(\Omega, \Cc, \Gamma)$ is a naive convex co-compact triple. If 
\begin{align*}
\cdim_{F_\Omega(x)} \left( \partiali\Cc \cap F_\Omega(x) \right)=0
\end{align*}
for all $x \in \partiali\Cc$, then $(\Cc, \dist_\Omega)$ is Gromov hyperbolic.
\end{lemma}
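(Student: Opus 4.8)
The plan is to verify Gromov hyperbolicity of the proper geodesic space $(\Cc,\dist_\Omega)$ through Proposition~\ref{prop:GH_suff}, using the canonical bicombing $\sigma_{x,y}:=[x,y]$ given by projective line segments (reparametrized by arclength), which are $\dist_\Omega$-geodesics. So it suffices to produce a single $\delta>0$ for which every triangle with these projective sides is $\delta$-thin. Since the implications $(1)\Leftrightarrow(2)\Leftrightarrow(3)$ have already been established by the time we reach this lemma, under hypothesis (2) we may assume that $\Cc$ contains \emph{no} properly embedded simplex of dimension at least two. I would then argue by contradiction: if no uniform $\delta$ exists, I will manufacture a properly embedded two dimensional simplex inside $\Cc$, contradicting (3).

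The first step is a reduction. If the projective triangles are not uniformly thin, then for each $n$ there is a triangle $T_n$ with vertices $x_n,y_n,z_n$ and a point on one side whose $\dist_\Omega$-distance to the other two sides exceeds $n$. I would upgrade this to a sequence of interior points $m_n\in {\rm ConvHull}_\Omega(x_n,y_n,z_n)$ whose distance to \emph{all three} sides $[x_n,y_n]$, $[y_n,z_n]$, $[z_n,x_n]$ tends to infinity. One elementary way to extract $m_n$ is to move a definite distance from the bad point toward the opposite vertex (a point on one side far from the other two sides forces the triangle to be ``fat'' there, so moving inward stays far from all three sides); alternatively one can take a center of mass (Proposition~\ref{prop:center_of_mass}) of a compact truncation of the hull.

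Next I would exploit co-compactness of the $\Lambda$-action on $\Cc$ to normalize: choose $\gamma_n\in\Lambda$ with $\{\gamma_n m_n\}$ relatively compact in $\Cc$, and after passing to a subsequence assume $\gamma_n m_n\to m\in\Cc$ and $\gamma_n x_n,\gamma_n y_n,\gamma_n z_n\to a,b,c\in\overline{\Cc}$. Because the distance from $m_n$ to each side diverges, the receding argument used in Lemma~\ref{lem:existence_of_2_simplices} together with Proposition~\ref{prop:dist_est_and_faces} shows that $a,b,c\in\partiali\Cc$ and that each limiting side $[a,b]$, $[b,c]$, $[c,a]$ lies in $\partial\Omega$. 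Since $m\in\Cc\subset\Omega$ lies in ${\rm ConvHull}_\Omega(a,b,c)$ and is (in the limit) at infinite distance from all three sides, the points $a,b,c$ are not collinear and $m$ lies in the relative interior of their hull; by convexity the whole relative interior then lies in $\Omega$. As $\Cc$ is closed in $\Omega$ we have $\Omega\cap\overline{\Cc}=\Cc$, so $S:=\relint{\rm ConvHull}_\Omega(a,b,c)$ is a two dimensional convex subset of $\Cc$ with $\partiali S=[a,b]\cup[b,c]\cup[c,a]\subset\partial\Omega$, i.e. a properly embedded two dimensional simplex in $\Cc$. This contradicts (3).

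The main obstacle is the reduction in the second step: upgrading the failure of thin triangles (a point on one side far from the other two sides) to an interior point far from \emph{all three} sides, with control that survives the limit. This cannot be done by normalizing at a point lying on a side, since that side never recedes—hence the need for a genuinely interior basepoint. The remaining delicate point is checking that the limiting triangle is nondegenerate and properly embedded (distinctness and non-collinearity of $a,b,c$, and $\relint{\rm ConvHull}_\Omega(a,b,c)\subset\Omega$); here the co-compactness normalization is essential, as it is exactly what forces the limiting interior point $m$ to remain in $\Cc$ so that the resulting simplex sits inside $\Cc$ rather than merely in $\partial\Omega$.
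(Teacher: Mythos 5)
Your overall strategy—reduce to Proposition~\ref{prop:GH_suff}, argue by contradiction, and (using $(2)\Rightarrow(3)$, which is legitimately available at this point) manufacture a properly embedded two dimensional simplex—is coherent, but it hinges on a step you have not proved and which your suggested justifications do not deliver: the ``upgrade'' from a point $u_n$ on one side that is far from the other two sides to an \emph{interior} point $m_n$ whose distance to \emph{all three} sides tends to infinity. Moving ``a definite distance from the bad point toward the opposite vertex'' does not achieve this: already for long thin triangles in a normed plane (the Hilbert geometry of a simplex), the distance from a point of $[u_n,z_n]$ to the side $[x_n,y_n]$ grows extremely slowly, so one must travel a distance comparable to the diameter of the triangle, not to $n$; worse, any quantitative control of the three side-distances along $[u_n,z_n]$ implicitly uses convexity of distance functions along geodesics (Busemann convexity), which fails for general Hilbert metrics. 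The alternative suggestion—a center of mass of a truncation via Proposition~\ref{prop:center_of_mass}—gives no lower bound on distances to the sides at all, since the center of mass is only known to be projectively natural and to lie in the hull. So the heart of the argument is missing, and it is exactly the hard point.

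For comparison, the paper's proof avoids this entirely by normalizing at $u_n$ itself rather than at a hypothetical deep interior point. In the limit one then gets a \emph{degenerate} configuration: $u\in(a,b)\subset\Omega$ while only the two far sides become ideal, $[a,c]\cup[c,b]\subset\partiali\Cc$. No simplex is needed: Observation~\ref{obs:faces} gives $c\in\partial F_\Omega(a)$, so $[a,c)\subset\partiali\Cc\cap F_\Omega(a)$, and Observation~\ref{obs:lower_bd_from_bd} yields $\cdim_{F_\Omega(a)}\left(\partiali\Cc\cap F_\Omega(a)\right)\geq 1$, contradicting hypothesis (2) directly. If you insist on your route through (3), the correct repair is to take the limit \emph{first}: the degenerate configuration above already shows $\diam_{F_\Omega(a)}\left(\partiali\Cc\cap F_\Omega(a)\right)=+\infty$, and then Lemma~\ref{lem:existence_of_2_simplices} (already proved at this stage) produces the properly embedded two dimensional simplex you want. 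Either way, the one-shot construction of points far from all three sides of a single triangle, as written, is a genuine gap.
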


\begin{proof}  By Proposition~\ref{prop:GH_suff} it suffices to show that there exists $\delta > 0$ such that every geodesic triangle in $(\Cc, \dist_\Omega)$ whose sides are line segments is $\delta$-thin. Suppose not. Then for every $n \geq 0$  there exist $a_n, b_n, c_n \in \Cc$ and $u_n \in [a_n, b_n]$ such that 
\begin{align}
\label{eqn:not_thin_tri}
\dist_\Omega\left( u_n, [a_n, c_n] \cup [c_n, b_n]\right) > n.
\end{align}
By translating by $\Gamma$ and passing to a subsequence we can suppose that $u_n \rightarrow u \in \Cc$ and 
\begin{align*}
a_n, b_n, c_n \rightarrow a,b,c \in \overline{\Cc}. 
\end{align*}
By Equation~\eqref{eqn:not_thin_tri} we must have 
\begin{align*}
[a,c] \cup [c,b] \subset \partiali\Cc
\end{align*}
and by construction we have $u \in [a,b]$. Then $(a,b) \subset \Omega$ since $u \in \Omega$. 

Since $[a,c] \cup [c,b] \subset \partial \Omega$ and $(a,b) \subset \Omega$, Observation~\ref{obs:faces} part (4) implies that $c \in \partial F_\Omega(a)$. Then Observation~\ref{obs:lower_bd_from_bd}  implies that 
\begin{align*}
\cdim_{F_\Omega(a)} \left( \partiali\Cc \cap F_\Omega(a) \right) \geq 1
\end{align*}
and we have a contradiction. 
\end{proof}

\section{Proof of Theorem~\ref{thm:cc_one_d_faces}}\label{sec:pf_of_ond_d_faces_cc}

In this section we prove Theorem~\ref{thm:cc_one_d_faces} which we restate here. 

\begin{theorem}  Suppose $\Omega \subset \Pb(\Rb^d)$ is a properly convex domain, $\Gamma \subset \Aut(\Omega)$ is convex co-compact, and $\Cc : = \Cc_\Omega(\Gamma)$. Then the following are equivalent: 
\begin{enumerate}
\item every boundary face of $\Omega$ which intersects $\overline{\Cc}$ has dimension at most one,
\item the collection of all properly embedded simplices in $\Cc$ with dimension two is closed and discrete in the local Hausdorff convergence topology induced by $\dist_\Omega$, 
\item $(\Cc,\dist_\Omega)$ is relatively hyperbolic with respect to a collection of two dimensional properly embedded simplices, 
\item $\Gamma$ is a relatively hyperbolic group with respect to a collection of virtually Abelian subgroups of rank two.
\end{enumerate}
\end{theorem}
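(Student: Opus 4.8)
The plan is to leverage the general machinery from \cite{IZ2019b} recorded in Theorem~\ref{thm:structure_rel_hyp_cc}, so that the bulk of the equivalences come for free and the only substantive work is the implication $(1) \Rightarrow (2)$. As noted in the remark following the theorem statement, the chain $(2) \Leftrightarrow (3) \Leftrightarrow (4)$ is immediate from Theorem~\ref{thm:structure_rel_hyp_cc} (with $\Sc_{\simplexcc}$ being the family of maximal properly embedded simplices of dimension at least two), once one knows that in the present setting all such simplices have dimension exactly two. The implication $(2/3/4) \Rightarrow (1)$ is also relatively soft: if some boundary face $F = F_\Omega(x)$ meeting $\overline{\Cc}$ had dimension at least two, then since $\Lambda$ is convex co-compact, $F \subset \overline{\Cc_\Omega(\Lambda)}$ (the key difference between the convex co-compact and naive cases highlighted in the introduction), and a face of dimension $\geq 2$ would force the existence of properly embedded simplices of dimension $\geq 3$ or an undiscrete family, contradicting relative hyperbolicity via Theorem~\ref{thm:rh_embeddings_of_flats}.

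\textbf{The heart of the matter is $(1) \Rightarrow (2)$.} Here I would argue by contradiction: assume every boundary face meeting $\overline{\Cc}$ has dimension at most one, but the collection $\Sc$ of all two-dimensional properly embedded simplices in $\Cc$ fails to be closed and discrete in the local Hausdorff topology. First, I would rule out the failure of \emph{closedness}: by Proposition~\ref{prop:PES_closed} the set of all properly embedded simplices of dimension at least two is already closed, so a limit of two-dimensional simplices is a properly embedded simplex of dimension at least two, and I must show it cannot have dimension three or more. A three-dimensional properly embedded simplex would have a two-dimensional face $F_S(x)$ sitting inside some boundary face $F_\Omega(x)$ with $x \in \partial S \subset \overline{\Cc} \cap \partial\Omega$; by Observation~\ref{obs:faces_of_simplices_are_properly_embedded} this face is a two-dimensional properly embedded simplex in $F_\Omega(x)$, forcing $\dim F_\Omega(x) \geq 2$ and contradicting (1). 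So the limit, if any, is again two-dimensional, giving closedness.

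The remaining and genuinely delicate task is \emph{discreteness}. Suppose for contradiction there is a sequence of distinct two-dimensional simplices $S_n \in \Sc$ accumulating (locally Hausdorff) onto a simplex $S_\infty \in \Sc$. The strategy is to track the vertices: each $S_n$ has three vertices $v_1^{(n)}, v_2^{(n)}, v_3^{(n)} \in \partiali\Cc$, and after passing to a subsequence these converge to points $\hat{v}_1, \hat{v}_2, \hat{v}_3$, which (by local Hausdorff convergence and the closedness argument above) are the vertices of $S_\infty$. Using Lemma~\ref{lem:slide_along_faces}, if infinitely many $S_n$ were \emph{parallel} to $S_\infty$ in the sense of Definition~\ref{defn:parallel}, then the vertices $v_j^{(n)}$ would all lie in the fixed faces $F_\Omega(\hat{v}_j)$; since these faces have dimension at most one, each $F_\Omega(\hat{v}_j)$ is either a point or an open segment, and I would use the one-dimensionality together with Proposition~\ref{prop:dist_est_and_faces} to show the $S_n$ are confined to a bounded Hausdorff neighborhood of $S_\infty$, which combined with the structural rigidity (distinct maximal simplices intersect in at most a point, cf. item in Theorem~\ref{thm:structure_rel_hyp_cc}) would force only finitely many such $S_n$. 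If instead the $S_n$ are \emph{not} eventually parallel to $S_\infty$, then some vertex $v_j^{(n)}$ limits to a point $\hat{v}_j$ while lying in a face distinct from $F_\Omega(\hat{v}_j)$; I would then extract a configuration of three pairwise-distinct boundary faces along a limiting segment and invoke a convexity/limiting argument in the spirit of Proposition~\ref{prop:finding_two_d_faces} to manufacture a boundary face of dimension at least two meeting $\overline{\Cc}$, again contradicting (1).

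\textbf{I expect the discreteness step to be the main obstacle,} specifically the bookkeeping needed to separate the ``parallel'' and ``non-parallel'' cases and to extract the forbidden high-dimensional or non-discrete configuration in the non-parallel case. The one-dimensionality of the faces is what makes the argument close: a one-dimensional face contains at most a single open segment's worth of room, so the vertices of accumulating simplices have essentially no freedom to wiggle without either staying within a bounded neighborhood (handled by Lemma~\ref{lem:slide_along_faces}) or escaping to create a genuinely two-dimensional face (handled by the Proposition~\ref{prop:finding_two_d_faces}-style construction). Making this dichotomy airtight, and verifying that the limiting faces one produces actually meet $\overline{\Cc}$ rather than merely $\partial\Omega$, is where the real care is required.
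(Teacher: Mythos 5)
Your high-level structure matches the paper's: the chain $(2)\Rightarrow(3)\Rightarrow(4)\Rightarrow(1)$ consists of soft consequences of Theorem~\ref{thm:structure_rel_hyp_cc} and Theorem~\ref{thm:rh_embeddings_of_flats}, the closedness half of $(1)\Rightarrow(2)$ is handled exactly as in the paper, and the real content is discreteness. But your discreteness argument has genuine gaps in both branches of your dichotomy. In the parallel case, the appeal to the item of Theorem~\ref{thm:structure_rel_hyp_cc} that distinct maximal simplices intersect in at most one point is circular: all of the items (a)--(h) of that theorem are stated under the hypothesis ``when $\Sc_{\simplexcc}$ is closed and discrete,'' which (once (1) has ruled out simplices of dimension at least three) is precisely the statement (2) you are trying to prove. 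And even granting that item, the inference does not close: in a prism $\relint(T)\times(0,1)$ the slices $\relint(T)\times\{t\}$ form an infinite family of pairwise \emph{disjoint} properly embedded triangles with Hausdorff distance tending to zero, so ``bounded Hausdorff neighborhood plus intersections of size at most one'' does not imply finiteness. What is actually needed, and what the paper proves from Proposition~\ref{prop:finding_two_d_faces} alone (with no appeal to the structural theorem), is that under (1) every segment of $\partiali\Cc$ meeting $\partial S$ lies in $\partial S$ (Lemma~\ref{lem:lines_intersecting}), hence distinct two-dimensional simplices have disjoint boundaries (Lemma~\ref{lem:bd_of_triangles_intersecting}) and $\partial S = \cup_{x \in \partial S}F_\Omega(x)$ (Lemma~\ref{lem:faces_of_triangles}); with these in hand, ``parallel and distinct'' is impossible outright, which is how your case (a) should be closed.

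The non-parallel case is the more serious gap: it cannot be completed by a local configuration argument near $S_\infty$, and your sketch never uses the co-compactness of the $\Lambda$-action in the discreteness step, which is essential. Suppose a vertex $v_1^{(n)}\to\hat v_1$ with $(v_1^{(n)},\hat v_1)\subset\Omega$. Using the disjoint-boundary lemma one checks that then \emph{every} point of $\partial S_n$ sees \emph{every} point of $\partial S_\infty$ through $\Omega$ (any common boundary segment would lie in $\partial S_\infty$ by Lemma~\ref{lem:lines_intersecting} and violate disjointness), so no point $x$ admits three segments $[x,y_i]\subset\partial\Omega$ mixing the two simplices, and the hypotheses of Proposition~\ref{prop:finding_two_d_faces} can never be assembled from this data. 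This is not a bookkeeping issue: accumulating, pairwise-visible segments whose faces are all one-dimensional genuinely occur in convex bodies --- for instance if $\partial\Omega$ is locally the graph of $z=y^2$ in $\Rb^3$, the segments $[0,1]\times\{c\}\times\{c^2\}$ are one-dimensional faces that accumulate as $c$ varies and pairwise see each other through $\Omega$ --- so only the group action can exclude such behavior inside $\Cc$. The paper's proof uses it decisively: it sets $R_n = \sup\{r : S\cap\Bc_\Omega(p_0;r)\subset\overline{\Nc_\Omega(S_n;1)}\}$, extracts escape points $q_n\in S$ with $\dist_\Omega(q_n,S_n)=1$ and $\dist_\Omega(q_n,p_0)\to\infty$, renormalizes by $\gamma_n\in\Lambda$, and obtains limit simplices $S',S''$ sharing a boundary point (hence equal, by Lemmas~\ref{lem:faces_of_triangles} and~\ref{lem:bd_of_triangles_intersecting}) yet at distance $1$ from each other, a contradiction. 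Without some such renormalization your argument cannot rule out accumulation whose degeneration is only visible ``at infinity'' along $S_\infty$.
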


For the rest of the section suppose that $\Omega \subset \Pb(\Rb^d)$ is a properly convex domain, $\Gamma \subset \Aut(\Omega)$ is convex co-compact, and $\Cc : = \Cc_\Omega(\Gamma)$. 

The implications $(2) \Rightarrow (3) \Rightarrow (4) \Rightarrow (1)$ are easy applications of Theorem~\ref{thm:structure_rel_hyp_cc}.

\subsection{(2) $\Rightarrow$ (3)} Suppose that the collection of all properly embedded simplices in $\Cc$ with dimension two is closed and discrete in the local Hausdorff convergence topology induced by $\dist_\Omega$. 

Then every properly embedded simplex in $\Cc$ has dimension at most two. So the collection of all properly embedded simplices in $\Cc$ with dimension at least two coincides with the collection of all properly embedded simplices in $\Cc$ with dimension two. So  Theorem~\ref{thm:structure_rel_hyp_cc} implies that $(\Cc,\dist_\Omega)$ is relatively hyperbolic with respect to a collection of two dimensional properly embedded simplices.

\subsection{(3) $\Rightarrow$ (4)} Suppose that $(\Cc,\dist_\Omega)$ is relatively hyperbolic with respect to a collection $\Sc$ of two dimensional properly embedded simplices. 

We claim that every properly embedded simplex in $\Cc$ has dimension at most two. Suppose that $S \subset \Cc$ is a properly embedded simplex with dimension at least two. Then $(S,\dist_\Omega)$ is quasi-isometric to $\Rb^{\dim S}$, see Proposition~\ref{prop:quasi_isometric_simplex}. So by Theorem~\ref{thm:rh_embeddings_of_flats} there exist $S^\prime \in \Sc$ and $R >0$ such that $S \subset \Nc_\Omega(S^\prime; R)$. Since $(S^\prime, \dist_\Omega)$ is quasi-isometric to $\Rb^{2}$ we must have $\dim S = 2$. 

Then by Theorem~\ref{thm:structure_rel_hyp_cc} part~\ref{item:cc_four}, $\Gamma$ is a relatively hyperbolic group with respect to a collection of virtually Abelian subgroups of rank two.

\subsection{(4) $\Rightarrow$ (1)} Suppose that $\Gamma$ is a relatively hyperbolic group with respect to $\{H_1,\dots, H_m\}$ where each $H_j$ is a virtually Abelian subgroup of rank two. 

Let $\Sc_{\simplexcc}$ denote the family of all maximal properly embedded simplices in $\Cc_{\Omega}(\Gamma)$ of dimensional at least two. 

Fix $w \in \partiali \Cc$. We will show that $\dim F_\Omega(w) \leq 1$. It suffices to consider the case when $\dim F_\Omega(w) > 0$.  Then Theorem~\ref{thm:structure_rel_hyp_cc}  parts~\ref{item:cc_two} and~\ref{item:cc_one} imply that there exists a simplex $S \in \Sc_{\simplexcc}$ such that $F_\Omega(w) \subset \partial S$. Notice that $\dim S \geq 1+ \dim F_\Omega(w)$ and $(S,\dist_\Omega)$ is quasi-isometric to $\Rb^{\dim S}$, see Proposition~\ref{prop:quasi_isometric_simplex}.

Fix some $p \in \Cc$. By the {\v S}varc-Milnor Lemma and Theorem~\ref{thm:rh_embeddings_of_flats} there exists a coset $gH_j$ such that $S$ is contained in a bounded neighborhood of $gH_j \cdot p$ in $(\Cc, \dist_\Omega)$. Since $H_j$ is virtually isomorphic to $\Zb^2$, we must have $\dim S=2$. Thus 
\begin{align*}
\dim F_\Omega(w)  \leq -1 + \dim S = 1.
\end{align*}

Since $w$ was an arbitrary point in $\partiali\Cc$, every boundary face of $\Omega$ which intersects $\overline{\Cc}$ has dimension at most one.

\subsection{(1) $\Rightarrow$ (2)} Suppose that every boundary face of $\Omega$ which intersects $\overline{\Cc}$ has dimension at most one.

Then, $\Cc$ does not contain any properly embedded simplices with dimension three or more. Hence, using Theorem~\ref{thm:structure_rel_hyp_cc}, it is enough to show that the collection of all properly embedded two dimensional simplices in $\Cc$ is closed and discrete in the local Hausdorff convergence topology induced by $\dist_\Omega$. 

\begin{lemma}\label{lem:lines_intersecting} If $\ell \subset \partiali \Cc$ is a line segment, $S \subset \Cc$ is a properly embedded two dimensional simplex, and $\ell \cap \partial S \neq \emptyset$, then $\ell \subset \partial S$. \end{lemma}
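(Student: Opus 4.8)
The plan is to prove the inclusion directly, first analyzing the boundary faces of $\Omega$ met along $\partial S$ and then locating $\ell$ relative to them. Since $S$ is a two dimensional properly embedded simplex, Observation~\ref{obs:faces_of_simplices_are_properly_embedded} gives, for each point $m$ in an open edge $e$ of $S$, that $F_S(m) = e = \overline{S} \cap F_\Omega(m)$ and that $e$ is a one dimensional properly embedded simplex in $F_\Omega(m)$. Hypothesis (1) forces $\dim F_\Omega(m) \leq 1$, hence $\dim F_\Omega(m) = 1$; comparing the endpoints (using Observation~\ref{obs:faces}(3), which makes the endpoints of the one dimensional face $F_\Omega(m)$ extreme points of $\Omega$) I would deduce $F_\Omega(m) = e$ and that the two vertices of $S$ bounding $e$ are extreme points of $\Omega$. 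Running this over all three edges, the first step concludes that each open edge of $S$ is itself an open boundary face of $\Omega$ and that each vertex of $S$ is an extreme point of $\Omega$.

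Next I record that $\ell$ lies in a single face. After discarding the trivial case where $\ell$ is a point, I choose $w \in \relint \ell$ and set $F := F_\Omega(w)$; by Observation~\ref{obs:faces}(2) we have $\ell \subset \overline{F}$, and since $F$ meets $\overline{\Cc}$ and contains the nondegenerate segment $\ell$, hypothesis (1) gives $\dim F = 1$, so $\overline{F}$ is a closed segment with extreme endpoints. Note also that $\ell \cap S = \emptyset$ automatically, because $\ell \subset \partial\Omega$ while $S \subset \Omega$; thus the lemma is equivalent to the single inclusion $\ell \subset \overline{S}$. Now fix $z \in \ell \cap \partial S$. If $z$ lies in an open edge $e$ of $S$, then by the first step $F_\Omega(z) = e$ is one dimensional, so $z$ lies in the relative interior of $\overline{F}$ and $F = F_\Omega(z) = e$; hence $\ell \subset \overline{F} = \overline{e} \subset \partial S$, as desired.

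The remaining, and main, obstacle is the case where $z$ is a vertex of $S$. Here $z$ is an extreme point of $\Omega$, so $z$ is an endpoint of $\overline{F}$ and $\ell$ emanates from $z$ along the line spanned by $F$. Let $E, E'$ denote the two open edges of $S$ incident to $z$; by the first step these are one dimensional boundary faces. I claim $F \in \{E, E'\}$, which gives $\ell \subset \overline{F} \subset \partial S$. Suppose not, so $F, E, E'$ are pairwise distinct faces at $z$. Choosing $a \in E$ and $b \in E'$ and keeping $w \in F$, the segments $[z,a], [z,b], [z,w]$ lie in $\partial\Omega$, while $(a,b) \subset \relint S \subset \Omega$. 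For the chord $(a,w)$ I would argue a dichotomy: if $(a,w) \subset \partial\Omega$, then the face $G := F_\Omega(m)$ of any $m \in (a,w)$ contains $(a,w)$ and has $a \in \overline{G}$; either $a \in G$, forcing $E = F_\Omega(a) = G$ and hence $w \in \overline{E}$ so that $F \subset \overline{E}$ and (as $\dim F = 1$) $F = E$, contradicting $F \neq E$; or $a \in \partial G$, whence Observation~\ref{obs:faces}(3) gives $E = F_\Omega(a) \subset \partial G$ and so $\dim G \geq 2$, while $m \in (a,w) \subset \overline{\Cc}$ makes $G$ meet $\overline{\Cc}$, contradicting hypothesis (1). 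The same dichotomy applies to $(b,w)$. After eliminating these alternatives I may assume $(a,w), (b,w) \subset \Omega$, and then Proposition~\ref{prop:finding_two_d_faces}, applied with $x = z$ and $y_1,y_2,y_3 = a,b,w$, produces $w' \in \partiali\Cc$ with $\cdim_{F_\Omega(w')}\big(\partiali\Cc \cap F_\Omega(w')\big) \geq 2$, hence $\dim F_\Omega(w') \geq 2$ with $F_\Omega(w')$ meeting $\overline{\Cc}$ — again contradicting hypothesis (1). This contradiction establishes $F \in \{E, E'\}$ and completes the proof. The delicate point throughout is precisely this vertex case: a putative third face at the extreme point $z$ must, via the chord analysis, manufacture a two dimensional boundary face meeting $\overline{\Cc}$, which the standing hypothesis forbids.
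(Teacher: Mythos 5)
Your proof is correct and follows essentially the same approach as the paper: both split into the cases where $\ell$ meets $\partial S$ in an open edge or at a vertex, and in the vertex case both verify that the chords from a point of $\ell$ to points on the two adjacent edges lie in $\Omega$ (a chord in $\partial \Omega$ would force a boundary face of dimension at least two meeting $\overline{\Cc}$) and then apply Proposition~\ref{prop:finding_two_d_faces} to the same four-point configuration to contradict hypothesis (1). The differences are only cosmetic: you first show that the edges of $S$ are full one-dimensional boundary faces and its vertices are extreme points, which lets you handle the edge case directly, whereas the paper argues both cases by contradiction via convex hulls of segments.
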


\begin{proof} Suppose for a contradiction that there exists a line segment $\ell \subset \partiali \Cc$ and a properly embedded two dimensional simplex
$S \subset \Cc$ such that $\ell \cap \partial S \neq \emptyset$, but $\ell$ is not contained in $\partial S$. By replacing $\ell$ with a subinterval we can suppose that  $\ell$ intersects $\partial S$ at a single point $x$. 

If $x$ is in a one dimensional boundary face $F$ of $S$, then the convex hull of $\ell$ and $F$ provides a face in $\partiali\Cc$ with dimension at least two. So $x$ must be a vertex of $S$. 

Let $F_1, F_2 \subset \partial S$ be the edges adjacent to $x$. Then pick $y_1 \in F_1$, $y_2 \in F_2$, and $y_3 \in \relint(\ell)$. Then, $(y_1,y_2) \subset S \subset \Omega$. If we had $[y_1,y_3] \subset \partial \Omega$, then the convex hull of $F_1$ and $\ell$ provides a face in $\partiali\Cc$ with dimension at least two. So $(y_1, y_3) \subset \Omega$. For the same reasons, $(y_2, y_3) \subset \Omega$. Then Proposition~\ref{prop:finding_two_d_faces} implies that there exists a face $\partiali\Cc$ with dimensional at least two. So we have a contradiction. 
\end{proof}

Lemma~\ref{lem:lines_intersecting} has the following consequences. 

\begin{lemma}\label{lem:bd_of_triangles_intersecting} If $S_1, S_2 \subset \Cc$ are distinct properly embedded two dimensional simplices, then $\partial S_1 \cap \partial S_2 = \emptyset$. \end{lemma}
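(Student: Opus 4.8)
The plan is to argue by contradiction: assuming $\partial S_1 \cap \partial S_2 \neq \emptyset$, I will show that $S_1 = S_2$, contradicting distinctness. First I record the structure of the boundary of a properly embedded two dimensional simplex $S_i$: it is a topological triangle consisting of three vertices together with the three closed edges joining them. Since $S_i$ is open in its span we have $\partial S_i = \partiali S_i$, and since $S_i$ is properly embedded in $\Cc$ we have $\partiali S_i \subset \partiali \Cc$. In particular every edge of $S_1$ is a line segment contained in $\partiali \Cc$, so Lemma~\ref{lem:lines_intersecting} applies to it.

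Next I propagate the intersection across all of $\partial S_1$. Choose a point $p \in \partial S_1 \cap \partial S_2$. Then $p$ lies on some closed edge $\ell$ of $S_1$, and since $\ell \subset \partiali\Cc$ and $p \in \ell \cap \partial S_2$, Lemma~\ref{lem:lines_intersecting} gives $\ell \subset \partial S_2$. The two endpoints of $\ell$ are vertices of $S_1$ lying in $\partial S_2$; each of the remaining two edges of $S_1$ shares an endpoint with $\ell$, hence meets $\partial S_2$, and so is itself contained in $\partial S_2$ by Lemma~\ref{lem:lines_intersecting}. Therefore all three edges of $S_1$ lie in $\partial S_2$, i.e.\ $\partial S_1 \subset \partial S_2$.

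Finally I upgrade this containment to equality of the vertex sets. Let $v_1, v_2, v_3$ be the vertices of $S_1$ and $w_1, w_2, w_3$ the vertices of $S_2$. Fix a vertex $v_i$: the two edges of $S_1$ meeting at $v_i$ are non-colinear line segments (the $v_j$ span a two dimensional projective subspace), and both are contained in $\partial S_2$. On the other hand, every point in the relative interior of an edge of $S_2$ has a neighborhood in $\partial S_2$ contained in a single projective line. Thus $v_i$ cannot lie in the relative interior of an edge of $S_2$, so $v_i \in \{w_1,w_2,w_3\}$. Hence $\{v_1,v_2,v_3\} \subset \{w_1,w_2,w_3\}$, and since both sets have three elements they coincide. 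As a properly embedded simplex equals the relative interior of the convex hull of its vertices (Lemma~\ref{lem:slide_along_faces}), this forces $S_1 = S_2$, a contradiction.

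The main obstacle is the last step: converting the set-theoretic containment $\partial S_1 \subset \partial S_2$ into the statement that the two simplices share the same vertices, which relies on the local description of $\partial S_2$ as being ``straight'' away from its three corner points and on the non-colinearity of the edges emanating from each vertex of $S_1$. Everything else is bookkeeping built on Lemma~\ref{lem:lines_intersecting}, which carries the geometric weight of the argument.
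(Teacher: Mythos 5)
Your proof is correct and follows exactly the route the paper intends: the paper states this lemma without proof as an immediate consequence of Lemma~\ref{lem:lines_intersecting}, and your argument (propagating containment of the edges of $S_1$ into $\partial S_2$ via that lemma, then matching up vertices to force $S_1 = S_2$) is precisely the fleshed-out version of that deduction. The vertex-identification step could alternatively be replaced by running the propagation symmetrically to get $\partial S_1 = \partial S_2$, but your local straightness argument is equally valid.
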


\begin{lemma}\label{lem:faces_of_triangles} If $S \subset \Cc$ is a properly embedded two dimensional simplex, then 
\begin{align*}
\partial S  = \bigcup_{x \in \partial S} F_\Omega(x).
\end{align*}
\end{lemma}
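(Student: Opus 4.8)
The plan is to prove the two inclusions separately. The inclusion $\partial S \subset \bigcup_{x \in \partial S} F_\Omega(x)$ is immediate, since $x \in F_\Omega(x)$ for every $x$. So the content is the reverse inclusion $\bigcup_{x \in \partial S} F_\Omega(x) \subset \partial S$, and I would establish this by fixing an arbitrary $x \in \partial S$ and showing that the entire face $F_\Omega(x)$ lies in $\partial S$.

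First I would record two preliminary facts about the point $x$. Since $S$ is a simplex it is open in its span, so $\relint S = S$ and hence $\partial S = \overline{S} \setminus S = \partiali S$; because $S \subset \Cc$ is a properly embedded simplex, properness gives $\partiali S \subset \partiali \Cc$, and therefore $x \in \partial S \subset \partiali \Cc \subset \partial \Omega$. Next, because $\Lambda$ is convex co-compact, I would use the property recalled in the introduction that any boundary face of $\Omega$ meeting $\overline{\Cc}$ is entirely contained in $\overline{\Cc}$; applied to $F_\Omega(x)$ (which meets $\overline{\Cc}$ at $x$) and combined with $F_\Omega(x) \subset \partial\Omega$, this yields $F_\Omega(x) \subset \overline{\Cc} \cap \partial\Omega = \partiali \Cc$.

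With these in hand I would invoke hypothesis (1): since $F_\Omega(x)$ intersects $\overline{\Cc}$, its dimension is at most one. This leaves a short case analysis. If $\dim F_\Omega(x) = 0$, then $F_\Omega(x) = \{x\} \subset \partial S$ trivially. If $\dim F_\Omega(x) = 1$, then $F_\Omega(x)$ is a nontrivial open line segment $\ell$, which by the previous paragraph satisfies $\ell \subset \partiali \Cc$ and $x \in \ell \cap \partial S \neq \emptyset$; Lemma~\ref{lem:lines_intersecting} then gives $F_\Omega(x) = \ell \subset \partial S$. In either case $F_\Omega(x) \subset \partial S$, and taking the union over $x \in \partial S$ finishes the proof.

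The only delicate point, and hence the main obstacle, is the containment $F_\Omega(x) \subset \partiali\Cc$, which is precisely what makes Lemma~\ref{lem:lines_intersecting} applicable. This is exactly where the hypothesis that $\Lambda$ is convex co-compact (rather than merely naive convex co-compact) is essential: it guarantees that a boundary face meeting $\overline{\Cc}$ lies inside $\overline{\Cc}$, so that the one-dimensional face $F_\Omega(x)$ is an \emph{ideal} segment of $\Cc$ and not a segment that merely grazes $\overline{\Cc}$. Once this is secured, everything reduces to the dichotomy $\dim F_\Omega(x) \in \{0,1\}$ supplied by hypothesis (1).
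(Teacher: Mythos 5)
Your proof is correct and follows the paper's intended route: the paper states this lemma (together with Lemma~\ref{lem:bd_of_triangles_intersecting}) as a direct consequence of Lemma~\ref{lem:lines_intersecting}, and your argument is exactly that derivation, with the needed supporting facts filled in --- namely that $\partial S \subset \partiali\Cc$ by proper embeddedness, that convex co-compactness forces $F_\Omega(x) \subset \overline{\Cc}\cap\partial\Omega = \partiali\Cc$, and that hypothesis (1) reduces matters to the cases $\dim F_\Omega(x) \in \{0,1\}$, where Lemma~\ref{lem:lines_intersecting} applies.
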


We complete the proof of (1) $\Rightarrow$ (2) by showing the following.

\begin{lemma} 
\label{lem:cc_collection_is_isolated}
The collection of properly embedded two dimensional simplices in $\Cc$ is closed and discrete in the local Hausdorff convergence topology. \end{lemma}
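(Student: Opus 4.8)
The plan is to prove closedness and discreteness separately; the first is immediate and the second is where all the work lies.

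\textit{Closedness.} Under hypothesis~(1), $\Cc$ contains no properly embedded simplex of dimension at least three: if $S$ were such a simplex and $x$ lay in the relative interior of a two-dimensional face of $S$, then $F_S(x)=\overline{S}\cap F_\Omega(x)$ would be two-dimensional by Observation~\ref{obs:faces_of_simplices_are_properly_embedded}, forcing $\dim F_\Omega(x)\geq 2$ for a boundary face meeting $\overline{\Cc}$, against~(1). Hence the collection of two-dimensional properly embedded simplices coincides with the collection of \emph{all} properly embedded simplices of dimension at least two, and the latter is closed in the local Hausdorff topology by Proposition~\ref{prop:PES_closed}.

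\textit{Discreteness, setup.} I argue by contradiction: if the (closed) collection is not discrete, it contains a two-dimensional simplex $S$ and pairwise distinct two-dimensional simplices $S_n\to S$ in the local Hausdorff topology. By Lemma~\ref{lem:bd_of_triangles_intersecting} we have $\partial S_n\cap\partial S=\emptyset$ for all $n$, and by Lemma~\ref{lem:faces_of_triangles} every vertex of $S$ and of each $S_n$ is an extreme point of $\Omega$ while every edge is a one-dimensional boundary face. Writing $v^n_1,v^n_2,v^n_3$ for the vertices of $S_n$ and $v_1,v_2,v_3$ for those of $S$, I would first show that, after relabeling and passing to a subsequence, $v^n_i\to v_i$. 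Since each $S_n$ is a flat quasi-isometric to $\Rb^2$ (Proposition~\ref{prop:quasi_isometric_simplex}), local Hausdorff convergence forces the geodesic rays from a fixed basepoint toward the ideal vertices of $S_n$ to converge to the corresponding rays of $S$, hence the ideal vertices to converge. As $S_n\neq S$, at least one index has $v^n_i\neq v_i$.

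\textit{The engine.} The key clean consequence of the structural lemmas is that $(v_i,v^n_j)\subset\Omega$ for \emph{all} $i,j$. Indeed, if some $[v_i,v^n_j]$ lay in $\partial\Omega$, it would be a line segment in $\partiali\Cc$ meeting $\partial S$ at the point $v_i$, so Lemma~\ref{lem:lines_intersecting} would force $[v_i,v^n_j]\subset\partial S$ and in particular $v^n_j\in\partial S\cap\partial S_n$, contradicting Lemma~\ref{lem:bd_of_triangles_intersecting}. Thus $S$ and $S_n$ ``see each other through the interior'' even though the vertices $v^n_j$ converge onto the boundary faces of $S$. Finally I would exploit the co-compactness of $\Lambda$ on $\Cc$ to blow up this degenerating configuration: the cross-segments $[v_i,v^n_j]$ collapse onto the edges of $S$ as $n\to\infty$, so after translating by suitable $\gamma_n\in\Lambda$ chosen to keep a witness point of the collapsing region inside a fixed compact subset of $\Cc$, and passing to a limit, one extracts an apex $x\in\partiali\Cc$ and points $y_1,y_2,y_3\in\partiali\Cc$ with $[x,y_1]\cup[x,y_2]\cup[x,y_3]\subset\partial\Omega$ and $(y_1,y_2)\cup(y_2,y_3)\cup(y_3,y_1)\subset\Omega$. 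Proposition~\ref{prop:finding_two_d_faces} then yields a point $w\in\partiali\Cc$ with $\cdim_{F_\Omega(w)}(\partiali\Cc\cap F_\Omega(w))\geq 2$, whence $\dim F_\Omega(w)\geq 2$ for a boundary face meeting $\overline{\Cc}$, contradicting~(1). (Alternatively, the same blow-up can be arranged to produce two \emph{distinct} limit simplices sharing a boundary point, directly contradicting Lemma~\ref{lem:bd_of_triangles_intersecting}.)

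\textit{Main obstacle.} The delicate point is the final blow-up: one must choose the translates $\gamma_n$ and the witness points so that the renormalized limit is non-degenerate, i.e. so that the limiting apex genuinely emits three boundary segments to three points that remain mutually visible through $\Omega$, rather than collapsing onto a single edge or merging the $y_i$. A secondary technical point is establishing the vertex convergence $v^n_i\to v_i$ for these non-compact flats, since local Hausdorff convergence only controls the $S_n$ on bounded balls and a priori says nothing about their behaviour near the ideal boundary; this must be extracted from the Hilbert geometry of the triangle via the geodesic rays to the ideal vertices.
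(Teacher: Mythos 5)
Your closedness argument is fine and matches the paper's (hypothesis (1) rules out simplices of dimension at least three, then Proposition~\ref{prop:PES_closed} applies). The gap is in discreteness: everything you write up to ``the engine'' is correct (the cross-segments $(v_i,v_j^n)$ do lie in $\Omega$, by exactly the argument you give via Lemmas~\ref{lem:lines_intersecting} and~\ref{lem:bd_of_triangles_intersecting}), but the ``final blow-up'' is not an argument --- it is a statement of what you would like the limit to be, and you yourself flag that ensuring non-degeneracy is unresolved. That non-degeneracy is the entire content of the lemma. Concretely, the natural execution of your plan fails: take a point on a collapsing segment $(v_i, v_i^n)\subset\Cc$, translate it into a compact set by $\gamma_n\in\Lambda$, and pass to limits $\gamma_n S\to S^{\prime\prime}$, $\gamma_n S_n\to S^\prime$, $\gamma_n v_i\to a\in\partial S^{\prime\prime}$, $\gamma_n v_i^n\to b\in\partial S^\prime$ with $(a,b)\subset\Omega$. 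Nothing prevents $S^\prime=S^{\prime\prime}$ with $a,b$ lying on different faces of that single simplex (a chord of a triangle); in that case there is no contradiction with Lemma~\ref{lem:bd_of_triangles_intersecting}, and no apex emitting three boundary segments for Proposition~\ref{prop:finding_two_d_faces} appears either. Without a quantitative separation between $\gamma_n S$ and $\gamma_n S_n$ that survives the limit, both of your proposed contradictions can evaporate.

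The paper's proof supplies exactly such a witness, and never tracks vertices at all. Fixing $p_0\in S$, it sets $R_n=\sup\{r\geq 0: S\cap\Bc_\Omega(p_0;r)\subset\overline{\Nc_\Omega(S_n;1)}\}$; if some $R_n=\infty$ then $\partial S\subset F_\Omega(\partial S_n)=\partial S_n$ by Proposition~\ref{prop:dist_est_and_faces} and Lemma~\ref{lem:faces_of_triangles}, forcing $S=S_n$, so $R_n<\infty$ and $R_n\to\infty$. This yields $q_n\in S$ with $\dist_\Omega(q_n,p_0)\to\infty$, $[q_n,p_0]\subset\overline{\Nc_\Omega(S_n;1)}$, and the crucial normalization $\dist_\Omega(q_n,S_n)=1$. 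Translating $q_n$ into a compact set and passing to limits gives $[q,p)\subset S^{\prime\prime}\cap\overline{\Nc_\Omega(S^\prime;1)}$ with $q\in\Cc$, $p\in\partiali\Cc$; then $p\in\partial S^{\prime\prime}\cap\partial S^\prime$ (again Proposition~\ref{prop:dist_est_and_faces} plus Lemma~\ref{lem:faces_of_triangles}), so $S^\prime=S^{\prime\prime}$ by Lemma~\ref{lem:bd_of_triangles_intersecting} --- contradicting the fact that $q\in S^{\prime\prime}$ sits at distance exactly $1$ from $S^\prime$. The missing idea in your proposal is precisely this: choose the renormalizing points along a geodesic $[p_0,q_n]$ that hugs $S_n$, at a fixed positive distance from $S_n$, so that ``the two limit simplices are distinct'' is guaranteed by construction rather than hoped for. (Your other flagged gap, vertex convergence $v_i^n\to v_i$, is real but minor: it follows from Hausdorff convergence of the closures $\overline{S_n}\to\overline{S}$ in $\Pb(\Rb^d)$, since a Hausdorff limit of nondegenerate closed triangles is the convex hull of the limits of their vertices; but once the main gap is filled this step is not even needed.)
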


\begin{proof} By Proposition~\ref{prop:PES_closed} the collection of properly embedded two dimensional simplices in $\Cc$ is closed in the local Hausdorff convergence topology. So we just have to verify discreteness. 

Suppose that $S_n \rightarrow S$ in the local Hausdorff convergence topology. We need to show that $S_n = S$ for $n$ sufficiently large. Suppose not, then by passing to a subsequence we can assume that $S_n \neq S$ for all $n$. 

Fix $p_0 \in S$. Then for $n \geq 0$ let 
\begin{align*}
R_n := \sup\left\{ r \geq 0 : S \cap \Bc_\Omega(p_0; r) \subset \overline{\Nc_\Omega(S_n; 1)} \right\}.
\end{align*}
If $R_n = \infty$ for some $n$, then 
\begin{align*}
S\subset \overline{\Nc_\Omega(S_n; 1)}.
\end{align*}
So by Proposition~\ref{prop:dist_est_and_faces} and Lemma~\ref{lem:faces_of_triangles}
\begin{align*}
\partial S \subset \bigcup_{x \in \partial S_n} F_\Omega(x) = \partial S_n.
\end{align*}
So Lemma \ref{lem:bd_of_triangles_intersecting} implies that $S=S_n$. Thus we can assume that $R_n < \infty$ for all $n$. Further, since $S_n \rightarrow S$ in the local Hausdorff convergence topology, we see that $R_n \rightarrow \infty$ (see \cref{obs:convergence_in_loc_haus_top}).

Then there exists a sequence $(q_n)_{n \geq 1}$ in $S$ such that 
\begin{enumerate}
\item $\lim_{n \rightarrow \infty} \dist_\Omega(q_n,p_0) = \infty$, 
\item $[q_n, p_0] \subset  \overline{\Nc_\Omega(S_n; 1)}$, and 
\item $\dist_\Omega(q_n, S_n) = 1$. 
\end{enumerate}

Next pick $\gamma_n \in \Gamma$ such that $\{ \gamma_n q_n : n \geq 0\}$ is a relatively compact set in $\Cc$. Then by passing to a subsequence we can suppose that $\gamma_n q_n \rightarrow q \in \Cc$ and $\gamma_n p_0 \rightarrow p \in \partiali \Cc$. Using Proposition~\ref{prop:PES_closed} and passing to another subsequence we can suppose that $\gamma_n S_n \rightarrow S^\prime$ and $\gamma_n S \rightarrow S^{\prime\prime}$ where $S^\prime$ and $S^{\prime\prime}$ are both properly embedded two dimensional simplices in $\Cc$. Further, 
\begin{align*}
[q,p) \subset S^{\prime\prime} \cap \overline{\Nc_\Omega(S^\prime; 1)}.
\end{align*}
Then Proposition~\ref{prop:dist_est_and_faces} implies that $p \in \partial S^{\prime\prime} \cap \bigcup_{s' \in \partial S'}F_{\Omega}(s')$. Then $p \in \partial S^{\prime\prime} \cap \partial S^\prime$ by Lemma~\ref{lem:faces_of_triangles}. So $S^{\prime\prime} = S^{\prime}$ by Lemma~\ref{lem:bd_of_triangles_intersecting}. However, by construction $q \in S^{\prime\prime}$ and $\dist_\Omega(q,S^\prime) = 1$. So we have a contradiction. 
\end{proof}

\section{Proof of Theorem~\ref{thm:ncc_one_d_faces}}\label{sec:pf_of_ond_d_faces_ncc}

In this section we prove Theorem~\ref{thm:ncc_one_d_faces} which we restate here. 

\begin{theorem}Suppose $(\Omega, \Cc, \Gamma)$ is a naive convex co-compact triple. Then the following are equivalent: 
\begin{enumerate}
\item $\cdim_{F_\Omega(x)} \left(\partiali\Cc \cap F_\Omega(x)\right) \leq 1$ for all $x \in \partiali\Cc$,
\item $(\Cc,\dist_\Omega)$ is relatively hyperbolic with respect to a collection of two dimensional properly embedded simplices, 
\item $\Gamma$ is a relatively hyperbolic group with respect to a collection of virtually Abelian subgroups of rank two.
\end{enumerate}
\end{theorem}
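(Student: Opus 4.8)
The plan is to prove the cycle of implications, reducing the naive case to the machinery of Theorem~\ref{thm:main_char_ncc} and Theorem~\ref{thm:ncc_rel_hyp}. The implications $(2) \Rightarrow (3)$ and $(3) \Rightarrow (1)$ should follow the same pattern as in the convex co-compact case (Section~\ref{sec:pf_of_ond_d_faces_cc}), now using the naive analogues. For $(2) \Rightarrow (3)$: since every properly embedded simplex has dimension at most two (this must be extracted from hypothesis $(1)$ via Observation~\ref{obs:qdim_of_faces_PES}, which forces $\cdim_{F_\Omega(x)}(F_S(x)) = \dim F_S(x) \le 1$ so no three-dimensional simplex can embed), Theorem~\ref{thm:main_char_ncc} gives that $\Lambda$ is relatively hyperbolic with respect to virtually Abelian subgroups, and Theorem~\ref{thm:rh_embeddings_of_flats} together with Proposition~\ref{prop:quasi_isometric_simplex} forces the dimension of the peripheral flats to equal two. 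For $(3) \Rightarrow (1)$: given that $\Lambda$ is relatively hyperbolic with respect to rank-two virtually Abelian subgroups, I would invoke Theorem~\ref{thm:ncc_strong_IS} to obtain a strongly isolated, coarsely complete, $\Lambda$-invariant family $\Sc$, and then use Theorem~\ref{thm:ncc_rel_hyp} part~\ref{item:ncc_six}, which bounds the Hausdorff distance between $\overline{\Cc} \cap F_\Omega(x)$ and $F_S(x)$; combined with Observation~\ref{obs:qdim_of_faces_PES} and the fact that $\dim F_S(x) \le 1$ (each $S \in \Sc$ being two-dimensional), this bounds $\cdim_{F_\Omega(x)}(\partiali\Cc \cap F_\Omega(x))$ by one.

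The heart of the proof is $(1) \Rightarrow (2)$, and here the strategy is to verify that $(\Omega, \Cc, \Lambda)$ has coarsely isolated simplices in the sense of Definition~\ref{defn:IS}, then apply Theorem~\ref{thm:main_char_ncc} and refine via Theorem~\ref{thm:ncc_strong_IS}. First I would observe that hypothesis $(1)$, via Observation~\ref{obs:qdim_of_faces_PES}, immediately rules out properly embedded simplices of dimension three or more, so the only simplices to manage are two-dimensional. The plan is to let $\Sc$ be the family of all \emph{maximal} properly embedded two-dimensional simplices in $\Cc$ and show it is isolated (closed and discrete) and coarsely complete; $\Lambda$-invariance is automatic from the definition of $\Cc$ being $\Lambda$-invariant. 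Coarse completeness should be nearly immediate since every properly embedded simplex of dimension at least two is two-dimensional and hence contained in a maximal one.

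The main obstacle will be establishing that $\Sc$ is closed and discrete in the local Hausdorff topology, which is exactly where the naive setting departs sharply from the convex co-compact setting. In the convex co-compact proof (Lemmas~\ref{lem:lines_intersecting} through the final lemma of Section~\ref{sec:pf_of_ond_d_faces_cc}) one crucially uses that a boundary face meeting $\overline{\Cc}$ is \emph{contained} in $\overline{\Cc}$, giving the clean facts $\partial S_1 \cap \partial S_2 = \emptyset$ for distinct simplices and $\partial S = \bigcup_{x \in \partial S} F_\Omega(x)$. In the naive case these can fail, so the correct substitutes must be phrased in terms of $\partiali\Cc \cap F_\Omega(x)$ and coarse dimension rather than literal containment. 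The plan is to re-derive the key non-crossing phenomenon using Proposition~\ref{prop:finding_two_d_faces}: if two distinct two-dimensional simplices had boundaries interacting badly, or if a line segment in $\partiali\Cc$ crossed $\partial S$ transversally, one would produce the configuration $[x,y_1] \cup [x,y_2] \cup [x,y_3] \subset \partial\Omega$ with $(y_i,y_j) \subset \Omega$, yielding a face of coarse dimension at least two and contradicting hypothesis $(1)$.

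Concretely, I would first prove a naive analogue of Lemma~\ref{lem:lines_intersecting}: any line segment $\ell \subset \partiali\Cc$ meeting $\partial S$ must lie in $\partial S$, the proof being the same case analysis (interior-of-edge versus vertex) now closed off by Proposition~\ref{prop:finding_two_d_faces} instead of by the containment property. From this I would deduce the coarse non-crossing of distinct simplices. Then I would run the compactness/rescaling argument of the final lemma of Section~\ref{sec:pf_of_ond_d_faces_cc} essentially verbatim: assuming a sequence $S_n \to S$ with $S_n \ne S$, translate by $\Lambda$ to keep a witness point $q_n$ with $\dist_\Omega(q_n, S_n) = 1$ in a compact region, extract limits $S', S''$, and derive a contradiction from the non-crossing statement together with Proposition~\ref{prop:dist_est_and_faces}. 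Once $\Sc$ is shown isolated and coarsely complete, Theorem~\ref{thm:main_char_ncc} yields relative hyperbolicity of $(\Cc, \dist_\Omega)$ with respect to properly embedded simplices, and Theorem~\ref{thm:rh_embeddings_of_flats} pins their dimension to two, completing $(1) \Rightarrow (2)$.
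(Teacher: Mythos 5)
Your high-level route for $(1) \Rightarrow (2)$ (build an isolated, coarsely complete, $\Lambda$-invariant family and feed it to Theorem~\ref{thm:main_char_ncc}) is the paper's route, but your choice of family is fatally wrong: you take $\Sc$ to be \emph{all} maximal properly embedded two-dimensional simplices and claim it can be shown isolated. In the naive setting this is false, not merely hard: as the paper emphasizes when motivating Definition~\ref{defn:IS} (citing \cite[Section 2.3]{IZ2019b}), there exist naive convex co-compact triples satisfying $(1)$ in which $\Cc$ contains a bounded continuum of \emph{parallel} two-dimensional simplices, and that family is nowhere discrete in the local Hausdorff topology. The same examples kill your concrete non-crossing lemma: you assert ``any line segment $\ell \subset \partiali\Cc$ meeting $\partial S$ must lie in $\partial S$,'' but if $S_1, S_2$ are distinct parallel simplices with vertices $v_j, w_j$ and $F_\Omega(v_j)=F_\Omega(w_j)$, a segment such as $[v_1,w_1] \subset \partiali\Cc$ meets $\partial S_1$ without lying in it; the true statement is Lemma~\ref{lem:lines_intersecting_faces}, phrased with $F_\Omega(\partial S)$ in place of $\partial S$ (ironically, your preceding paragraph predicts exactly this, and then your concrete plan ignores it). The missing idea that makes $(1) \Rightarrow (2)$ work is the collapsing of parallel classes: Lemma~\ref{lem:faces_of_vertices} gives a uniform $R$ with $\diam_{F_\Omega(a)}\left(\partiali\Cc \cap F_\Omega(a)\right) \leq R$ for every vertex $a$ of every simplex, one then defines $\Phi(S)$ via the center of mass of $\partiali\Cc \cap F_\Omega(v_j)$ (Proposition~\ref{prop:center_of_mass}) and proves that $\Sc = \Phi(\Sc_0)$ is isolated and coarsely complete (Lemmas~\ref{lem:bd_distance_parallel_simplices} and~\ref{lem:bd_faces_of_triangles_intersecting}). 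Even your ``verbatim'' rescaling argument cannot close without this: in the limit you can only conclude that $S'$ and $S''$ are parallel, so the witness distance must be taken to be $R+1$ (not $1$) to contradict the Hausdorff bound between parallel simplices.

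There is a second genuine gap, in your $(3) \Rightarrow (1)$. Part~\eqref{item:ncc_six} of Theorem~\ref{thm:ncc_rel_hyp} only controls $\overline{\Cc} \cap F_\Omega(x)$ for $x \in \partial S$ with $S \in \Sc$; it says nothing about an arbitrary $x \in \partiali\Cc$. The whole difficulty is to show that any $x$ with $\cdim_{F_\Omega(x)}\left(\partiali\Cc \cap F_\Omega(x)\right) \geq 1$ actually satisfies $x \in F_\Omega(\partial S_0)$ for some $S_0$ in the strongly isolated family, and only then does part~\eqref{item:ncc_six} apply. This is the content of the paper's Lemma~\ref{lem:cdim2-implies-simplex} (a non-tangential limiting argument producing nearby two-dimensional simplices along $[p_0,w_0)$) together with the chain-of-balls argument exploiting strong isolation to force all those simplices to coincide; your sketch skips it entirely. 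Finally, a smaller logical slip: in proving $(2) \Rightarrow (3)$ you invoke hypothesis $(1)$ to bound simplex dimensions, but $(1)$ is not available inside that implication; the bound $\dim S \leq 2$ must be extracted from $(2)$ itself via Proposition~\ref{prop:quasi_isometric_simplex} and Theorem~\ref{thm:rh_embeddings_of_flats}, as the paper does.
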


 The proof is similar in structure to the proof of Theorem~\ref{thm:cc_one_d_faces} in the previous section, but extending the argument to the naive convex co-compact case introduces a number of technicalities, especially in the proof that (1) $\Rightarrow$ (2). 

Suppose for the rest of the section that $(\Omega, \Cc, \Gamma)$ is a naive convex co-compact triple. We also recall a notation that will be used frequently below: if $X \subset \overline{\Omega}$ is a subset, then 
\begin{align*}
F_{\Omega}(X)=\bigcup_{x \in X} F_{\Omega}(x).
\end{align*}

\subsection{(2) $\Rightarrow$ (3)} Suppose that $(\Cc,\dist_\Omega)$ is relatively hyperbolic with respect to a collection $\Sc$ of two dimensional properly embedded simplices. 

We claim that every properly embedded simplex in $\Cc$ has dimension at most two. Suppose that $S \subset \Cc$ is a properly embedded simplex with dimension at least two. Then $(S,\dist_\Omega)$ is quasi-isometric to $\Rb^{\dim S}$, see Proposition~\ref{prop:quasi_isometric_simplex}. So by Theorem~\ref{thm:rh_embeddings_of_flats} there exist $S^\prime \in \Sc$ and $R >0$ such that $S \subset \Nc_\Omega(S^\prime; R)$. Since $(S^\prime, \dist_\Omega)$ is quasi-isometric to $\Rb^{2}$ we must have $\dim S \leq 2$. 

Then by Theorem~\ref{thm:ncc_rel_hyp} part~\eqref{item:ncc_four}, $\Gamma$ is a relatively hyperbolic group with respect to a collection of virtually Abelian subgroups of rank two.

\subsection{(3) $\Rightarrow$ (2)} Suppose that $\Gamma$ is a relatively hyperbolic group with respect to $\{H_1,\dots, H_m\}$ where each $H_j$ is a virtually Abelian subgroup of rank two. Then Theorem \ref{thm:main_char_ncc} implies that $(\Cc, \dist_{\Omega})$ is a relatively hyperbolic space with respect to a family $\Sc$ of properly embedded simplices of dimension at least two. Thus it is enough to show that if $S \in \Sc$, then $\dim(S) = 2$. 

Fix $S \in \Sc$. Then Proposition \ref{prop:quasi_isometric_simplex} implies that $(S,\dist_{\Omega})$ is quasi-isometric to $\Rb^{\dim(S)}$. Next fix some $p \in \Cc$. By the {\v S}varc-Milnor Lemma and Theorem~\ref{thm:rh_embeddings_of_flats} there exists a coset $gH_j$ such that $S$ is contained in a bounded neighborhood of $gH_j \cdot p$ in $(\Cc, \dist_\Omega)$. Since $H_j$ is virtually isomorphic to $\Zb^2$, we must have $\dim S=2$.

\subsection{(2) $\Rightarrow$ (1)}

Suppose $(\Cc, \dist_\Omega)$ is relatively hyperbolic with respect to a collection $\Sc$ of two dimensional properly embedded  simplices. By Theorems \ref{thm:main_char_ncc} and \ref{thm:ncc_strong_IS}, there exists a strongly isolated, coarsely complete, and $\Gamma$-invariant collection $\Sc_0$ of properly embedded simplices in $\Cc$ of dimension at least two. By Proposition \ref{prop:quasi_isometric_simplex} and Theorem~\ref{thm:rh_embeddings_of_flats}, each simplex in $\Sc_0$ is contained in a bounded neighborhood of a simplex in $\Sc$. Hence each simplex in $\Sc_0$ is two dimensional. 

Fix $w \in \partiali \Cc$. We will show that $\cdim_{F_{\Omega}(w)}( \partiali \Cc \cap F_{\Omega}(w)) \leq 1$. It suffices to consider the case when $\cdim_{F_{\Omega}(w)}( \partiali \Cc \cap F_{\Omega}(w)) >0$. Then
\begin{align*}
\diam_{F_{\Omega}(w)}(\partiali \Cc \cap F_{\Omega}(w))=+\infty
\end{align*} 
which implies that there exists $$w^\prime \in \partiali \Cc \cap \partial F_{\Omega}(w).$$ We first prove the following lemma showing that if we approach points on $(w,w')$ non-tangentially (i.e. along a projective geodesic ray), then we are close to some properly embedded simplex. This can be viewed as a quantitative version of ~\cite[Proposition 2.5]{B2004} or \cite[Lemma 3.9]{B2006}. 

\begin{lemma}
\label{lem:cdim2-implies-simplex}
For any $r, \varepsilon >0$ and $p \in \Cc$, there exist $w_0\in (w,w^\prime)$ and $p_0 \in [p,w_0)$ such that: if $x \in [p_0,w_0)$, then there exists a properly embedded simplex $S_x$ in $\Cc$ of dimension at least two such that 
\begin{align*}
\Pb \left( \Spanset\{ w,w^\prime,p\} \right) \cap \Bc_\Omega(x;r) \subset \Nc_{\Omega}(S_x;\varepsilon).
\end{align*}
\end{lemma}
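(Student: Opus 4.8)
The plan is to argue by contradiction using the co-compactness of $\Lambda$ on $\Cc$. Suppose the lemma fails for some $r, \varepsilon > 0$ and $p \in \Cc$. Fixing a single $w_0 \in (w, w')$ and letting the auxiliary point run along $[p, w_0)$ toward $w_0$, the negation produces a sequence $x_n \in (p, w_0)$ with $x_n \to w_0$ such that no properly embedded simplex of dimension at least two has $\Pb(\Spanset\{w, w', p\}) \cap \Bc_\Omega(x_n; r)$ inside its $\varepsilon$-neighborhood. First I would record the geometry we start from: since $w' \in \partial F_\Omega(w)$ we have $(w, w') \subset F_\Omega(w) \subset \partial\Omega$, so $[w, w']$ is a segment in $\partial\Omega$ and $w_0 \in \partial\Omega$; since $p \in \Cc \subset \Omega$, the open segment $(p, w_0)$ lies in $\Omega$ and approaches the boundary point $w_0$ non-tangentially from the direction of $p$.

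Next I would pass to a limit. Choose $\gamma_n \in \Lambda$ so that $\gamma_n x_n$ lies in a fixed compact subset of $\Cc$ and, after passing to a subsequence, set $\wh{x} = \lim \gamma_n x_n \in \Cc$ together with $\wh{w}, \wh{w}', \wh{w}_0, \wh{p}$ the limits of $\gamma_n w, \gamma_n w', \gamma_n w_0, \gamma_n p$, and $\wh{P} = \lim \gamma_n \Pb(\Spanset\{w, w', p\})$. Using $\Lambda$-invariance of $\dist_\Omega$ and $\dist_\Omega(x_n, p) \to \infty$, one checks that $\wh{p} \in \partial\Omega$, that $(\wh{p}, \wh{w}_0) \subset \Omega$ contains $\wh{x}$, and that $[\wh{w}, \wh{w}'] \subset \partial\Omega$ with $\wh{w}_0$ in its interior. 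Applying Proposition~\ref{prop:dist_est_and_faces} to the in-face distances of $w_0$ to $w$ and to $w'$ guarantees this limiting boundary segment is non-degenerate and that $\wh{w}' \in \partial F_\Omega(\wh{w}_0)$, so $F_\Omega(\wh{w}_0)$ has positive dimension.

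The heart of the argument is to produce one properly embedded simplex $\wh{S} \subset \Cc$ of dimension two with $\wh{P} \cap \Bc_\Omega(\wh{x}; r) \subset \Nc_\Omega(\wh{S}; \varepsilon/2)$. Here I would exploit the standing hypothesis $\diam_{F_\Omega(w)}\left(\partiali\Cc \cap F_\Omega(w)\right) = +\infty$: selecting points of $\partiali\Cc \cap F_\Omega(w)$ that are arbitrarily far apart along the flat edge through $w_0$ and invoking the ``far from the two radial segments'' selection of Lemma~\ref{lem:existence_of_2_simplices} (with $\wh{x}$ playing the role of the interior point), a second application of co-compactness extracts in the limit a two dimensional simplex whose two non-edge sides lie in $\partiali\Cc$ and which runs alongside the flat edge near $\wh{x}$. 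Since $\wh{x}$ lies at infinite $\Omega$-distance from all of $\partial\Omega$, the section $\wh{P} \cap \Bc_\Omega(\wh{x}; r)$ is a bounded piece of $\Omega \cap \wh{P}$ of diameter comparable to $r$ about $\wh{x}$, and the flatness of $\wh{S}$ together with its alignment with the two distinguished directions at $\wh{x}$ yields the desired $\varepsilon/2$-covering.

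Finally I would transfer back. By $\Lambda$-invariance, $\gamma_n\big(\Pb(\Spanset\{w, w', p\}) \cap \Bc_\Omega(x_n; r)\big) = \gamma_n \Pb(\Spanset\{w, w', p\}) \cap \Bc_\Omega(\gamma_n x_n; r)$, and these bounded sets converge in the Hausdorff metric to $\wh{P} \cap \Bc_\Omega(\wh{x}; r)$; hence for $n$ large the properly embedded two dimensional simplex $\gamma_n^{-1}\wh{S}$ satisfies $\Pb(\Spanset\{w, w', p\}) \cap \Bc_\Omega(x_n; r) \subset \Nc_\Omega(\gamma_n^{-1}\wh{S}; \varepsilon)$, contradicting the choice of $x_n$. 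The main obstacle is the third step: not merely finding a two dimensional simplex passing near $\wh{x}$ (which the infinite diameter of the face readily supplies), but arranging it to be \emph{aligned} with $\wh{P}$ so that the entire plane-section of the ball, rather than just its center, lies in the $\varepsilon/2$-neighborhood. Controlling this alignment is exactly where the non-tangential approach along $[p, w_0)$ and the precise use of the flat edge direction are essential.
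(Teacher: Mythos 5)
Your argument has a genuine gap, and it sits exactly where you flag it: Step 3 is never actually carried out, and the way you set up the contradiction makes it impossible to carry out by the limiting argument you sketch. By fixing a single $w_0 \in (w,w^\prime)$ and taking bad points $x_n \to w_0$ along $[p,w_0)$, the only lower bound available from Proposition~\ref{prop:dist_est_and_faces} is $\liminf_n \dist_\Omega\left(x_n, [p,w]\cup[p,w^\prime]\right) \geq \dist_{F_\Omega(w)}(w_0,w)$, a \emph{fixed finite} constant. In general this distance really does stay bounded: for instance when $\Omega$ is itself a simplex and $w_0, w$ lie in the same open boundary face, the rays $[p,w_0)$ and $[p,w)$ are asymptotic (parallel lines in the normed-space model of the Hilbert metric), at Hausdorff distance comparable to $\dist_{F_\Omega(w)}(w_0,w)$. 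Consequently, after rescaling by $\gamma_n$ and passing to the limit, the segment $[\wh{p},\wh{w}]$ passes within bounded distance of $\wh{x}$ and therefore meets $\Omega$; the limiting triangle with vertices $\wh{p},\wh{w},\wh{w}^\prime$ is \emph{not} a properly embedded simplex. So no aligned simplex drops out of your first limit, and your proposed repair --- ``a second application of co-compactness'' producing a simplex ``aligned with $\wh{P}$'' --- is precisely the content of the lemma, asserted rather than proved. (A smaller but related error: you invoke Proposition~\ref{prop:dist_est_and_faces} to get non-degeneracy of $[\wh{w},\wh{w}^\prime]$ and $\wh{w}^\prime \in \partial F_\Omega(\wh{w}_0)$, but that proposition only bounds in-face distances when ambient distances stay bounded; it cannot be used in the reverse direction to conclude that points at infinite in-face distance have distinct limits.)

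The paper's proof avoids this obstruction by negating the lemma along a \emph{moving} sequence $w_n \to w^\prime$ with $\dist_{F_\Omega(w)}(w,w_n) = n$, and then, for each $n$, choosing the bad point $q_n^\prime \in [p,w_n)$ so close to $w_n$ that
\begin{align*}
\dist_\Omega\left(q_n^\prime, [p,w]\cup[p,w^\prime]\right) \geq n/2 ,
\end{align*}
which is possible only because the lower bound $\dist_{F_\Omega(w)}(w_n,w)=n$ from Proposition~\ref{prop:dist_est_and_faces} tends to infinity. This divergence is exactly what forces all three sides $[p_\infty,w_\infty^\prime]\cup[w_\infty^\prime,w_\infty]\cup[w_\infty,p_\infty]$ of the rescaled limit configuration into $\partiali\Cc$, so that $\relint {\rm ConvHull}_\Omega(p_\infty,w_\infty,w_\infty^\prime)$ is a properly embedded two-dimensional simplex containing $q_\infty^\prime$; the limit of the rescaled plane sections then lies in the closure of this simplex, and alignment is automatic rather than something to be engineered. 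Your final transfer step (pulling back by $\gamma_n^{-1}$) matches the paper's, but it only works once this simplex exists. To fix your proof you would need to replace the fixed $w_0$ with the paper's sequence $w_n \to w^\prime$ --- i.e., use the full strength of the negation of the lemma, quantified over all $w_0 \in [w,w^\prime)$, not just one.
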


\begin{proof} Since $w^\prime \in \partial F_\Omega(w)$, for each $n$ we can find $w_n \in (w,w^\prime)$ such that 
$$
\dist_{F_\Omega(w)}(w,w_n)=n.
$$
Then $w_n \to w^\prime$. Fix  $r, \varepsilon > 0$ and $p \in \Cc$. Suppose that the lemma fails. So in particular, it fails for each $w_n$.  Then for each $n \geq 1$, there exists a sequence $(q_{n,m})_{m \geq 1}$ in $[p,w_n)$ with $\lim_{m \to \infty}q_{n,m}=w_n$ and
\begin{align}
\label{eqn:contradiction-step-for-cdim2}
\Pb \left( \Spanset\{ w, w^\prime, p \} \right) \cap \Bc_{\Omega}(q_{n,m};r) \not \subset \Nc_{\Omega}(S;\varepsilon)
\end{align}
for any properly embedded simplex $S$ in $\Cc$ of dimension at least two. By Proposition \ref{prop:dist_est_and_faces}, 
\begin{align*}
\liminf_{m \to \infty} \dist_{\Omega}(q_{n,m},[p,w) \cup [p,w^\prime)) \geq  \dist_{F_{\Omega}(w)}(w_n,w) = n.
\end{align*}
Then for each $n$, we choose $m_n$ large enough such that 
\begin{align}
\label{eqn:dist-qn-from-edges-is-n}
\dist_{\Omega}\left(q_{n,m_n},[p,w] \cup [p,w^\prime]\right) \geq n/2.
\end{align}
Set $q'_n:=q_{n,m_n}$. 

Since $\Gamma$ acts co-compactly on $\Cc$, we can pass to a subsequence and choose $\gamma_n \in \Gamma$ such that $\gamma_n q'_n \to q'_\infty \in \Cc$. Up to passing to another subsequence, we can assume that
\begin{align*}
\gamma_n w', \gamma_n w, \gamma_n p  \to w'_\infty, w_\infty, p_\infty \in \overline{\Cc}.
\end{align*}  
By construction and by Equation \eqref{eqn:dist-qn-from-edges-is-n}, 
\begin{align*}
[p_\infty,w'_\infty] \cup [w'_\infty,w_\infty] \cup [w_\infty,p_\infty] \subset \partiali \Cc.
\end{align*}
Thus,
\begin{align*}
S:=\relint \left( {\rm ConvHull}_{\overline{\Omega}}\{ w_{\infty}, w'_{\infty}, p_{\infty}\} \right)
\end{align*}
is a properly embedded two dimensional simplex in $\Cc$ which contains $q_\infty^\prime$. 
Then 
\begin{align*}
\Pb \left( \Spanset\{ w, w^\prime, p \} \right) \cap \Bc_{\Omega}(q_{n}^\prime;r) \subset \Nc_{\Omega}(\gamma_n^{-1} S;\varepsilon)
\end{align*}
for $n$ sufficiently large, which contradicts Equation \eqref{eqn:contradiction-step-for-cdim2} and concludes the proof of this lemma.
\end{proof}

We will now use Lemma \ref{lem:cdim2-implies-simplex} to show that there exists $S_0 \in \Sc_0$ such that $w \in F_{\Omega}(\partial S_0)$. 

Since $\Sc_0$ is coarsely complete, there exists $R_0\geq 0$ such that any properly embedded simplex of dimension at least two in $\Cc$ is contained in the $R_0$-tubular neighborhood of a simplex in $\Sc_0$. Fix $\varepsilon > 0$. Since $\Sc_0$ is strongly isolated, there exists $D_{\varepsilon} \geq 0$ such that: if $S_1, S_2 \in \Sc_0$ are distinct, then 
\begin{align}
\label{eqn:strongly-isolated-D}
\diam_\Omega \left( \Nc_{\Omega}(S_1;\varepsilon+R_0) \cap \Nc_{\Omega}(S_2; \varepsilon+R_0) \right) \leq D_{\varepsilon}.
\end{align}

Fix $r:=D_{\varepsilon}+1$ and any point $p \in \Cc$. Apply Lemma \ref{lem:cdim2-implies-simplex} to $r, \varepsilon$ and $p$ to get $w_0 \in (w,w^\prime)$ and $p_0 \in [p,w)$ satisfying the conclusions of the lemma. Then pick a sequence $(x_n)_{n \geq 1}$ in  $[p_0,w_0)$ such that $x_n \to w_0$ and 
\begin{align*}
\dist_{\Omega}(x_n,x_{n+1})=r
\end{align*}
for all $n \geq 1$. By Lemma \ref{lem:cdim2-implies-simplex} and our choice of $R_0>0$, for each $n$ there exists a properly embedded simplex $S_n \in \Sc_0$ such that
\begin{align*}
\Pb(\Spanset\{ w, w^\prime, p \}) \cap \Bc_{\Omega}(x_n;r) \subset \Nc_{\Omega}(S_n;\varepsilon+R_0).
\end{align*}
Then, if $n \geq 1$, 
\begin{align*}
(x_n,x_{n+1}) &\subset \Bc_{\Omega}(x_n;r) \cap  \Bc_{\Omega}(x_{n+1};r) \cap \Pb \left( \Spanset\{ w, w^\prime, p \} \right) \\
&\subset \Nc_{\Omega}(S_n;\varepsilon+R_0) \cap \Nc_{\Omega}(S_{n+1};\varepsilon+R_0). 
\end{align*}
Thus 
\begin{align*}
\diam_\Omega \left( \Nc_{\Omega}(S_n;\varepsilon+R_0) \cap \Nc_{\Omega}(S_{n+1};\varepsilon+R_0) \right) \geq \dist_{\Omega}(x_n,x_{n+1})=r> D_{\varepsilon}.
\end{align*}
Then Equation \eqref{eqn:strongly-isolated-D} implies that $S_n=S_{n+1}=:S_0$ for all $n \geq 1$. Then $\{x_n : n \in \Nb\} \subset \Nc_{\Omega}(S_0;\varepsilon+R_0)$ and so by Proposition \ref{prop:dist_est_and_faces}, 
\begin{align*}
w_0=\lim_{n \rightarrow \infty} x_n \in F_{\Omega}(\partial S_0).
\end{align*}
Then $w \in F_{\Omega}(\partial S_0)$ as $w_0 \in F_{\Omega}(w)$. By Theorem \ref{thm:ncc_rel_hyp} part (6), there exists $D^\prime>0$ such that
\begin{align*}
\partiali \Cc \cap F_{\Omega}(w) \subset \Nc_{F_{\Omega}(w)}\left( F_{S_0}(w); D^\prime \right),
\end{align*}
that is, 
\begin{align*}
 \cdim_{F_{\Omega}(w)}\left( \partiali \Cc \cap F_{\Omega}(w) \right) \leq \dim F_{S_0}(w)  \leq \dim S_0-1=1.
\end{align*}
This proves that for any $w \in \partiali \Cc$, 
\begin{align*}
 \cdim_{F_{\Omega}(w)}\left( \partiali \Cc \cap F_{\Omega}(w) \right) \leq 1.
\end{align*}

\subsection{(1) $\Rightarrow$ (2)} Suppose $\cdim_{F_\Omega(x)} \left(\partiali\Cc \cap F_\Omega(x)\right) \leq 1$ for all $x \in \partiali\Cc$.

 Let $\Sc_0$ denote the collection of all properly embedded simplices in $\Cc$ with dimension at least two. By Observation~\ref{obs:qdim_of_faces_PES}, $\Cc$ does not contain any properly embedded simplices with dimension three or more. Hence $\Sc_0$ consists of two dimensional simplices. 

We will construct a collection $\Sc \subset \Sc_0$ of properly embedded two dimensional simplices which are isolated, coarsely complete, and $\Gamma$-invariant (see Definition~\ref{defn:IS}). Then Theorem~\ref{thm:main_char_ncc} will imply that $(\Cc, \dist_\Omega)$ is relatively hyperbolic with respect to a family of properly embedded simplices in $\Cc$. 

We note that it is possible for $\Sc_0$ to have non-discrete families of parallel maximal properly embedded simplices (see Lemma~\ref{lem:slide_along_faces} and \cite[Section 2.3]{IZ2019b}) and hence the challenge in constructing $\Sc$ is to identify a ``canonical'' simplex in each family of parallel simplices. This is accomplished by using a center of mass construction, which is similar to the construction of $\Sc_{\rm core}$ in the proof of Theorem 10.1 in~\cite{IZ2019b}.

Since the proof is lengthy, we provide a short outline of the steps involved. First we prove a technical result, \cref{lem:faces_of_vertices}, which implies that each family of parallel simplices is uniformly bounded (also see Lemma~\ref{lem:slide_along_faces}). This uniformity is key in the center of mass construction in Equations \eqref{eqn:center_of_mass_for_parallel_simplices} and \eqref{eqn:canonical_simplex_construction}. As mentioned above, this construction identifies one ``canonical" simplex in each family of parallel simplices. Once this ``canonical" set of simplices is constructed, the rest of the section (\cref{lem: S is gamma invariant in final argument} to \cref{lem: isolated in final argument}) is devoted to verifying that this family is indeed isolated, coarsely complete, and $\Gamma$-invariant. These lemmas are analogues in the naive convex co-compact case of \cref{lem:lines_intersecting} through \cref{lem:cc_collection_is_isolated}. The former lemmas play a similar role here as the latter lemmas did in the proof of (1) $\implies$ (2) of \cref{thm:cc_one_d_faces}.

We now being our proof. The key idea behind the proof of the next lemma is the following. If the lemma fails, we can use a re-scaling argument to construct a properly embedded two-dimensional simplex $S$ with a vertex $a$ such that $\cdim_{F_{\Omega}(a)}F_{\Omega}(a) \cap \partiali \Cc=1$. We can then construct  a boundary face of coarse dimension two and reach a contradiction.

\begin{lemma}\label{lem:faces_of_vertices} There exists $R > 0$ such that: if $S \in \Sc_0$ and $a \in \partial S$ is a vertex of $S$, then 
\begin{align*}
\diam_{F_\Omega(a)} \Big( \partiali\Cc \cap F_\Omega(a)\Big) \leq R.
\end{align*}

\end{lemma}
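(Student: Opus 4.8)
The plan is to argue by contradiction. Assume no such $R$ exists, so there are simplices $S_n \in \Sc_0$ with vertices $A_n, B_n, C_n$ such that $\diam_{F_\Omega(A_n)}\big(\partiali\Cc \cap F_\Omega(A_n)\big) > n$. First I would fix $u_n \in \partiali\Cc \cap F_\Omega(A_n)$ with $\dist_{F_\Omega(A_n)}(A_n, u_n) > n$, after possibly sliding the vertex $A_n$ to the midpoint of a long segment in $\partiali\Cc \cap F_\Omega(A_n)$ by means of Lemma~\ref{lem:slide_along_faces}. A rigidity fact drives everything: writing $P_n = \Pb(\Spanset\{A_n, B_n, C_n\})$, the three edges of $S_n$ lie in $\partial\Omega$, so each edge-line supports the planar convex set $\Omega \cap P_n$; this forces $\Omega \cap P_n = S_n$ and therefore $F_\Omega(A_n) \cap P_n = \{A_n\}$. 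In particular $u_n \notin P_n$, so $A_n, u_n, B_n, C_n$ are projectively independent.

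Then I would examine the tetrahedron $\Theta_n = {\rm ConvHull}_\Omega(A_n, u_n, B_n, C_n)$. Using Observation~\ref{obs:faces}(4) and the relations $(A_n,B_n),(A_n,C_n),(B_n,C_n)\subset\partial\Omega$ and $(A_n,u_n)\subset F_\Omega(A_n)\subset\partial\Omega$, every edge of $\Theta_n$ lies in $\partial\Omega$ and the two faces spanned with the long edge $[A_n,u_n]$ lie in $\partial\Omega$; moreover the diagonal joining the relative interiors of the opposite edges $[A_n,u_n]$ and $[B_n,C_n]$ lies in $\Omega$ (again by Observation~\ref{obs:faces}(4), with the median $(A_n, z)$, $z \in (B_n,C_n)$, as reference), so $\relint\Theta_n\subset\Omega$ and hence $\relint\Theta_n\subset\Cc$. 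The two remaining faces are the solid simplices $S_n$ and the parallel slid simplex ${\rm ConvHull}_\Omega(u_n,B_n,C_n)\in\Sc_0$, so $\Theta_n$ is not itself properly embedded.

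The significance of this picture is that inside the two ideal faces along $[A_n,u_n]$ one produces, for any interior point $x$ of $[A_n,u_n]$, two points $y_1,y_2\in\partiali\Cc$ — one in each face — with $[x,y_1],[x,y_2]\subset\partial\Omega$ and $(y_1,y_2)\subset\Omega$: an apex together with two of the three ideal points required by Proposition~\ref{prop:finding_two_d_faces}. A short check, using that $\cdim_{F_\Omega(A_n)}(\partiali\Cc\cap F_\Omega(A_n))\leq1$ makes this large set coarsely one dimensional, shows that no finite configuration supplies a third such point; the third point has to be manufactured from the fact that $\dist_{F_\Omega(A_n)}(A_n,u_n)\to\infty$. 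The plan is to use co-compactness of $\Lambda$ on $\Cc$: translate by $\gamma_n\in\Lambda$ keeping a carefully chosen interior point of $\Theta_n$ in a fixed compact set and pass to a subsequential limit $\hat A,\hat u,\hat B,\hat C$. Since boundary-connectedness is a closed condition, every limiting pair is still joined by a boundary segment, and the goal is that the unboundedly stretched edge forces, in the limit, a genuinely two dimensional boundary configuration: distinct $\hat x,\hat y_1,\hat y_2,\hat y_3\in\partiali\Cc$ with $[\hat x,\hat y_i]\subset\partial\Omega$ and $(\hat y_i,\hat y_j)\subset\Omega$ (equivalently, in the limit one of the solid faces degenerates onto $\partial\Omega$, or a third face emerges at the apex). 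Proposition~\ref{prop:finding_two_d_faces} then yields $w\in\partiali\Cc$ with $\cdim_{F_\Omega(w)}(\partiali\Cc\cap F_\Omega(w))\geq2$, contradicting hypothesis~(1); alternatively, if all four limiting faces become ideal one obtains a properly embedded three dimensional simplex in $\Cc$, which already contradicts~(1) by Observation~\ref{obs:qdim_of_faces_PES}.

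I expect the main obstacle to be exactly this limiting step — arranging that the translates $\gamma_n$ converge to a nondegenerate configuration and that the growing edge really produces the missing third ideal point rather than collapsing everything into a lower dimensional set. This is where the approach-point technique from the proof of Lemma~\ref{lem:existence_of_2_simplices} (choosing the point one follows to infinity to be uniformly far from the relevant edges, controlled through Proposition~\ref{prop:dist_est_and_faces}) should be adapted, and it is the only place where co-compactness, rather than purely local convexity, is needed. Everything preceding it — the planar rigidity of $S_n$, the face structure of $\Theta_n$, and the apex-plus-two-points configuration — follows directly from Observation~\ref{obs:faces}, Lemma~\ref{lem:slide_along_faces}, and Proposition~\ref{prop:dist_est_and_faces}.
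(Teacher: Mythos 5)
Your finite picture is correct and verifiable: the planar rigidity $\overline{\Omega}\cap\Pb(\Spanset\{A_n,B_n,C_n\})=\overline{S_n}$ (hence $F_\Omega(A_n)\cap P_n=\{A_n\}$ via Observation~\ref{obs:faces_of_simplices_are_properly_embedded}), the face structure of $\Theta_n$, and the observation that the tetrahedron by itself supplies only an apex plus two of the three points required by Proposition~\ref{prop:finding_two_d_faces}. The genuine gap is that the limiting step, which is the entire mathematical content of the lemma, is stated as a goal rather than proved: you never specify which interior point of $\Theta_n$ to track, nor prove that the limit produces the missing configuration instead of collapsing. This step can be closed, essentially by the argument the paper itself uses in its ``Claim''. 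Fix $z_n\in(B_n,C_n)$ and let $q_m\to A_n$ along the median $(A_n,z_n)\subset S_n$. Writing $S_n^u:=\relint{\rm ConvHull}_\Omega(u_n,B_n,C_n)$ (properly embedded by Lemma~\ref{lem:slide_along_faces}), Observation~\ref{obs:faces_of_simplices_are_properly_embedded} gives $\overline{S_n^u}\cap F_\Omega(A_n)=\{u_n\}$, so nearest points of $S_n^u$ to $q_m$ can only accumulate at $u_n$, and Proposition~\ref{prop:dist_est_and_faces} yields $\liminf_m\dist_\Omega(q_m,S_n^u)\geq\dist_{F_\Omega(A_n)}(A_n,u_n)>n$; choose $p_n:=q_m$ with $\dist_\Omega(p_n,S_n^u)\geq n/2$. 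Now translate so that $\gamma_np_n\to\hat p\in\Cc$ and pass to limits $\hat A,\hat u,\hat B,\hat C$ of the vertices. Since $\hat p$ lies in ${\rm ConvHull}_\Omega(\hat A,\hat B,\hat C)$ while the limit edges lie in $\partiali\Cc$, the set $S:=\relint{\rm ConvHull}_\Omega(\hat A,\hat B,\hat C)$ is a solid properly embedded two-dimensional simplex in $\Cc$; since $\dist_\Omega(p_n,S_n^u)\to\infty$, the limit ${\rm ConvHull}_\Omega(\hat u,\hat B,\hat C)$ lies entirely in $\partiali\Cc$; and the two side facets remain in $\partiali\Cc$ automatically. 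In particular, with this choice of tracked point your alternative all-ideal case never occurs, and nondegeneracy is free.

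There is also a flaw in the configuration you sketch for Proposition~\ref{prop:finding_two_d_faces}: the apex must be $\hat u$ itself, not an interior point $\hat x$ of the limit of the long edge. When the limit tetrahedron is nondegenerate its relative interior lies in $\Omega$, so a segment from $\hat x\in(\hat A,\hat u)$ to any point of the collapsed facet ${\rm ConvHull}_\Omega(\hat u,\hat B,\hat C)$ crosses $\Omega$, violating the requirement $[\hat x,\hat y_3]\subset\partial\Omega$. Taking instead $x=\hat u$, $y_1\in(\hat A,\hat B)$, $y_2\in(\hat B,\hat C)$, $y_3\in(\hat C,\hat A)$ works: each $[\hat u,y_i]$ lies in one of the three ideal facets, each $(y_i,y_j)$ crosses $S\subset\Omega$, all four points are distinct (since $\hat u\notin\overline{S}$, else an ideal facet would contain points of $S$) and lie in $\partiali\Cc$, so Proposition~\ref{prop:finding_two_d_faces} contradicts hypothesis (1). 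For comparison, the paper uses two far points rather than one: it slides $a_n$ to the midpoint of a segment $[a_n',a_n'']$ of face-length at least $n$, finds a median point of $S_n$ far from both slid simplices, and in the limit obtains a properly embedded segment $(a',a'')\subset\partiali\Cc\cap F_\Omega(a)$; the contradiction is then reached not through Proposition~\ref{prop:finding_two_d_faces} but through the coarse-dimension calculus: $\cdim_{F_\Omega(a)}(\partiali\Cc\cap F_\Omega(a))\geq1$ by Observation~\ref{obs:qdim_of_faces_PES}, and then $\cdim_{F_\Omega(x)}(\partiali\Cc\cap F_\Omega(x))\geq2$ for $x$ on an edge of the limit simplex adjacent to $a$, by Observation~\ref{obs:lower_bd_from_bd}. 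Once the limiting step above is supplied and the apex corrected, your one-far-point tetrahedron route is a correct and genuinely different way to finish.
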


\begin{proof} Suppose not. Then for each $n \geq 1$, there exists a properly embedded properly embedded two dimensional simplex $S_n \subset \Cc$ with a vertex $a_n \in \partial S_n$ where
\begin{align*}
\diam_{F_\Omega(a_n)} \Big( \partiali\Cc \cap F_\Omega(a_n)\Big) > n. 
\end{align*}
So there exists $a_n^\prime, a_{n}^{\prime\prime} \in\partiali\Cc \cap  F_\Omega(a_n)$ with 
\begin{align*}
\dist_{F_\Omega(a_n)}( a_n^\prime, a_n^{\prime\prime}) \geq n. 
\end{align*}
Using Lemma~\ref{lem:slide_along_faces} we can assume that $a_n$ is the $\dist_{F_{\Omega}(a_n)}$ Hilbert distance midpoint of $[a_n^\prime, a_n^{\prime\prime}]$. 

Let $b_n, c_n \in \partial S_n$ be the other vertices of $S_n$. Then Lemma~\ref{lem:slide_along_faces} implies that 
\begin{align*}
S_n^\prime :=\relint {\rm ConvHull}_{\overline{\Omega}}(a_n^\prime, b_n, c_n) 
\end{align*}
and 
\begin{align*}
S_n^{\prime\prime} := \relint {\rm ConvHull}_{\overline{\Omega}}(a_n^{\prime\prime}, b_n, c_n)
\end{align*}
are properly embedded simplices in $\Cc$ with
\begin{align*}
\dist_\Omega^{\Haus}\left( S_n, S_n^\prime \right) \leq \dist_{F_\Omega(a_n)}(a_n,a_n^\prime)
\end{align*}
and 
\begin{align*}
\dist_\Omega^{\Haus}\left( S_n, S_n^{\prime\prime} \right) \leq \dist_{F_\Omega(a_n)}(a_n,a_n^{\prime\prime}).
\end{align*}

\noindent \fbox{Claim:} For each $n \geq 1$ there exists $p_n \in S_n$ with 
\begin{align}
\label{eqn:y_n_far_from_simplices}
\min\{\dist_\Omega(p_n, S_n^\prime), \, \dist_\Omega(p_n, S_n^{\prime\prime})\} \geq n/2-1.
\end{align}

Fix $n$ and a point $x_n \in (b_n, c_n)$. Then fix a sequence $(q_{m})_{m \geq 1}$ in $(a_n,x_n)$ converging to $a_n$. For each $m$, fix $q_m^\prime \in S_n^\prime$ with 
$$
\dist_\Omega(q_m, S_n^\prime) = \dist_\Omega(q_m, q_m^\prime). 
$$
Since $\dist_\Omega^{\Haus}(S_n,S_n^\prime) \leq \dist_{F_{\Omega}(a_n)}(a_n,a_n')$, we have
\begin{align*}
\hil(q_{m}, q_{m}^\prime) \leq \dist_{F_\Omega(a_n)}(a_n,a_n^\prime)
\end{align*}
for all $m \geq 1$.

Since $q_m \rightarrow a_n$, the above estimate and Proposition~\ref{prop:dist_est_and_faces} imply that any limit point of $(q_{m}^\prime)_{m \geq 1}$ is in $F_\Omega(a_n) \cap \partial S_n^\prime = \{ a_n^\prime\}$. Thus, up to passing to a subsequence, $\lim_{m \rightarrow \infty} q_{m}^\prime = a_n^\prime$. Then Proposition~\ref{prop:dist_est_and_faces} implies that
\begin{align*}
\frac{n}{2} \leq \dist_{F_\Omega(a_n)}(a_n,a_n^{\prime}) \leq \liminf_{m \rightarrow\infty} \dist_\Omega( q_{m}, q_{m}^\prime) = \liminf_{m \rightarrow\infty} \dist_\Omega( q_{m}, S_n^\prime).
\end{align*}
So for $m$ sufficiently large $\frac{n}{2}-1 \leq \dist_\Omega( q_{m}, S_n^\prime)$. The same reasoning shows that $\frac{n}{2}-1 \leq \dist_\Omega( q_{m}, S_n^{\prime\prime})$ when $m$ is large. So $p_n := q_m$ for $m$ large enough satisfies the claim. This finishes the proof of this claim.

By passing to a subsequence and translating by $\Gamma$ we can assume that $p_n \rightarrow p \in \Cc$. Passing to further subsequences we can suppose that 
\begin{align*}
a_n, a_n^\prime, a_n^{\prime\prime}, b_n, c_n \rightarrow a,a^\prime, a^{\prime\prime}, b, c \in \partiali \Cc. 
\end{align*}
By construction $[a,b] \cup [b,c] \cup [c,a] \subset \partiali\Cc$ while  $p \in \relint {\rm ConvHull}_{\overline{\Omega}}(a,b,c) \cap \Cc$. So 
\begin{align}
\label{eqn:S_in_Omega} 
S:= \relint {\rm ConvHull}_{\overline{\Omega}}(a,b,c) \subset \Omega
\end{align}
is a properly embedded simplex in $\Cc$. Equation~\eqref{eqn:y_n_far_from_simplices} implies that 
\begin{align}
\label{eqn:Sprime_not_in_Omega}
{\rm ConvHull}_{\overline{\Omega}}(a^\prime,b,c) \cup {\rm ConvHull}_{\overline{\Omega}}(a^{\prime\prime},b,c) \subset \partiali\Cc.
\end{align}

By construction, $a_n \in [a_n^\prime, a_n^{\prime\prime}]$ for all $n$ and so $a \in [a^\prime, a^{\prime\prime}]$. Observation~\ref{obs:faces} part (4) and Equations~\eqref{eqn:S_in_Omega},~\eqref{eqn:Sprime_not_in_Omega} imply that $a^\prime \neq a^{\prime\prime} \in \partial F_\Omega(a)$. So $L:=(a^\prime, a^{\prime\prime})$ is a properly embedded one dimensional simplex in $\partiali\Cc \cap F_\Omega(a)$. Thus Observation \ref{obs:qdim_of_faces_PES} implies that  
\begin{align*}
\cdim_{F_\Omega(a)} \left( \partiali\Cc \cap F_\Omega(a)\right) \geq \cdim_{F_\Omega(a)} \left( L\right)  = \dim( L) = 1.
\end{align*}

Now fix a point $x \in \partial S$ in the relative interior of an edge adjacent to $a$, then $a \in \partial F_\Omega(x)$ by Observation~\ref{obs:faces_of_simplices_are_properly_embedded}. So Observation~\ref{obs:lower_bd_from_bd}  applied to $\partiali\Cc \cap F_\Omega(x) \subset F_\Omega(x)$ yields
\begin{align*}
\cdim_{F_\Omega(x)} \left(\partiali\Cc \cap F_\Omega(x)\right) 
&
\geq 1+ \cdim_{F_{F_{\Omega}(x)}(a)}\left( \partiali\Cc \cap \partial F_{\Omega}(x) \cap F_{F_{\Omega}(x)}(a)\right)
\\
& = 1+ \cdim_{F_\Omega(a)} \left(\partiali\Cc \cap F_\Omega(a)\right) \\
& \geq 2.
\end{align*}
This is a contradiction to our hypothesis that $\cdim_{F_\Omega(x)} \left(\partiali\Cc \cap F_\Omega(x)\right) \leq 1$ for all $x \in \partiali\Cc$.
\end{proof}

Next we define a map $\Phi : \Sc_0 \rightarrow \Sc_0$ which maps parallel simplices to a single simplex. Suppose $S \in \Sc_0$ has vertices $v_1,v_2,v_3$. By the above lemma, $\partiali \Cc \cap F_{\Omega}(v_i)$ is a compact subset of $F_{\Omega}(v_i)$ for $i=1,2,3$. Then using the center of mass from Proposition~\ref{prop:center_of_mass}, define
\begin{align}
\label{eqn:center_of_mass_for_parallel_simplices}
w_j := {\rm CoM}_{F_\Omega(v_j)}( \partiali\Cc \cap F_\Omega(v_j) )
\end{align}
and 
\begin{align}
\label{eqn:canonical_simplex_construction}
\Phi(S) := \relint {\rm ConvHull}_{\overline{\Omega}}( w_1,w_2,w_3).
\end{align}
 Then $\Phi(S)$ is a properly embedded two dimensional simplex in $\Cc$ by Lemma~\ref{lem:slide_along_faces}. Then define 
\begin{align*}
\Sc: = \{ \Phi(S) : S \in \Sc_0\}.
\end{align*}

 The next two lemmas verify that $\Sc$ is $\Gamma$-invariant and coarsely complete.

\begin{lemma}\label{lem: S is gamma invariant in final argument} $\Sc$ is $\Gamma$-invariant. \end{lemma} 

\begin{proof} Since $\Sc_0$ is $\Gamma$-invariant, this follows from the equivariance of the center of mass. \end{proof}

  \begin{lemma}\label{lem:bd_distance_parallel_simplices} If $S_1, S_2 \subset \Cc$ are properly embedded two dimensional simplices and $\Phi(S_1) = \Phi(S_2)$, then 
  \begin{align*}
  \dist_\Omega^{\Haus}(S_1, S_2) \leq R.
  \end{align*}
  In particular, $\Sc$ is coarsely complete. 
  \end{lemma}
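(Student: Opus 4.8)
The plan is to show that the hypothesis $\Phi(S_1) = \Phi(S_2)$ forces $S_1$ and $S_2$ to be parallel in the sense of Definition~\ref{defn:parallel}, and then to control their Hausdorff distance by combining Lemma~\ref{lem:slide_along_faces} with the uniform diameter bound of Lemma~\ref{lem:faces_of_vertices}. The key observation making this work is that the construction of $\Phi$ is insensitive to the particular simplex: if $v$ is a vertex of some $S \in \Sc_0$, the corresponding vertex of $\Phi(S)$ is ${\rm CoM}_{F_\Omega(v)}(\partiali\Cc \cap F_\Omega(v))$, which depends only on the face $F_\Omega(v)$, and which lies in $F_\Omega(v)$ by property (1) of Proposition~\ref{prop:center_of_mass}.

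First I would set up notation: let $v_1,v_2,v_3$ be the vertices of $S_1$ and $u_1,u_2,u_3$ the vertices of $S_2$. Since a two dimensional simplex determines its three vertices uniquely, the equality $\Phi(S_1)=\Phi(S_2)$ means the two triples of center-of-mass vertices agree as sets, so after relabeling I may assume they agree index by index. For each $j$ this common vertex lies in $F_\Omega(v_j)$ and in $F_\Omega(u_j)$; since two open faces are either equal or disjoint (Observation~\ref{obs:faces}(2)), this forces $F_\Omega(v_j) = F_\Omega(u_j)$. In particular $u_j \in F_\Omega(v_j)$, so $S_1$ and $S_2$ are parallel.

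With parallelism in hand, the estimate is immediate: applying Lemma~\ref{lem:slide_along_faces} to $S_1$ with the wiggled vertices $u_1,u_2,u_3 \in F_\Omega(v_1),F_\Omega(v_2),F_\Omega(v_3)$ reproduces $S_2$ and yields
\[
\dist_\Omega^{\Haus}(S_1,S_2) \leq \max_{1 \leq j \leq 3} \dist_{F_\Omega(v_j)}(v_j,u_j).
\]
Each $v_j$ and each $u_j$ is a vertex of a simplex in $\Cc$, hence lies in $\partiali\Cc \cap F_\Omega(v_j)$, so Lemma~\ref{lem:faces_of_vertices} bounds every term on the right by $R$, giving $\dist_\Omega^{\Haus}(S_1,S_2)\leq R$. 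For the coarse completeness claim I would first note that $\Phi$ is idempotent: the vertices of $\Phi(S)$ lie in the same faces as the vertices of $S$, so $\Phi(\Phi(S))=\Phi(S)$. Taking $S_1 = S$ and $S_2 = \Phi(S)$ then satisfies $\Phi(S_1)=\Phi(S_2)$, and the inequality just proved shows $\dist_\Omega^{\Haus}(S,\Phi(S)) \leq R$ with $\Phi(S)\in\Sc$; since (by the hypothesis and Observation~\ref{obs:qdim_of_faces_PES}) every properly embedded simplex of dimension at least two in $\Cc$ belongs to $\Sc_0$, this exhibits the uniform neighborhood and proves coarse completeness. The main obstacle, though it is a clean one, is the middle step: upgrading the mere set equality $\Phi(S_1)=\Phi(S_2)$ to the parallelism of $S_1$ and $S_2$, which is exactly where the containment ${\rm CoM}_{F_\Omega(v)}(\cdot)\in F_\Omega(v)$ and the dichotomy for open faces are used.
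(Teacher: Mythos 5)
Your proposal is correct and follows essentially the same route as the paper: the paper's (very terse) proof likewise deduces the Hausdorff bound by combining Lemma~\ref{lem:slide_along_faces} with the uniform diameter bound of Lemma~\ref{lem:faces_of_vertices}, and obtains coarse completeness from the idempotence $\Phi(S)=\Phi(\Phi(S))$ exactly as you do. Your intermediate step upgrading $\Phi(S_1)=\Phi(S_2)$ to parallelism of $S_1$ and $S_2$ (via uniqueness of vertices, the containment ${\rm CoM}_{F_\Omega(v)}(\cdot)\in F_\Omega(v)$, and the equal-or-disjoint dichotomy for open faces) is simply a careful spelling-out of what the paper calls ``immediate.''
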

  
  \begin{proof} Note that $\Phi(S_1)=\Phi(S_2)$ implies that $S_1$ and $S_2$ are parallel simplices. The first assertion then follows immediately from Lemma~\ref{lem:faces_of_vertices} and Lemma~\ref{lem:slide_along_faces}. For the in particular part, suppose $S \subset \Cc$ is a properly embedded two dimensional simplex. Then $\Phi(S) = \Phi(\Phi(S))$ and so by the first part
    \begin{align*}
  \dist_\Omega^{\Haus}(S, \Phi(S)) \leq R.
  \end{align*}
  Thus $S \subset \Nc_\Omega(\Phi(S); R)$. 
  \end{proof}

The proof that $\Sc$ is isolated is more involved and requires two preliminary lemmas.

\begin{lemma}\label{lem:lines_intersecting_faces} If $\ell \subset \partiali \Cc$ is a line segment, $S$ is a properly embedded two dimensional simplex, and $\ell \cap F_\Omega(\partial S) \neq \emptyset$, then $\ell \subset F_\Omega(\partial S)$. \end{lemma}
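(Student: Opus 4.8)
The plan is to argue by contradiction: from a segment that meets $F_\Omega(\partial S)$ but is not contained in it, I will manufacture a configuration to which Proposition~\ref{prop:finding_two_d_faces} applies, thereby exhibiting a boundary face of coarse dimension at least two and contradicting the standing hypothesis (1).

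First I would record the basic reduction that the relative interior of $\ell$ lies in a single face. Since $\ell \subset \partiali\Cc \subset \partial\Omega$, any two points of the open segment $\relint(\ell)$ lie on the common open segment $\relint(\ell) \subset \overline{\Omega}$, so by the definition of $F_\Omega$ and Observation~\ref{obs:faces}(2) there is a face $F$ with $\relint(\ell) \subset F$. Because $F_\Omega(\partial S) = \bigcup_{x \in \partial S} F_\Omega(x)$ is a union of faces and distinct faces are disjoint (Observation~\ref{obs:faces}(2)), either $F = F_\Omega(x_0)$ for some $x_0 \in \partial S$, in which case $\relint(\ell) \subset F_\Omega(\partial S)$, or else $\relint(\ell) \cap F_\Omega(\partial S) = \emptyset$. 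In the first case $\ell$ is essentially contained in $F_\Omega(\partial S)$, and the only remaining point is that the endpoints of $\ell$ cannot escape; this I would handle with a limiting argument based on Proposition~\ref{prop:dist_est_and_faces}, using that $\relint(\ell)$ stays within bounded Hilbert distance inside $F$ of the corresponding face of $S$, so its endpoints fall in the closures of the adjacent vertex/edge faces of $S$ and hence again in $F_\Omega(\partial S)$. Thus the crux is to rule out the second case, where $\ell$ meets $F_\Omega(\partial S)$ only at an endpoint $z \in F_\Omega(x_0)$, $x_0 \in \partial S$, while $\relint(\ell)$ avoids $F_\Omega(\partial S)$ entirely.

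Assume we are in this case. After passing to a sub-edge of $\partial S$ I may take $x_0$ to be a vertex $v_1$ of $S$ (if $z$ lies in an edge face, the analogous construction with that edge in place of a vertex, or a direct application of Proposition~\ref{prop:finding_two_d_faces} to $\ell$ together with the edge, yields the contradiction more quickly). Using $z \in F_\Omega(v_1)$, Lemma~\ref{lem:slide_along_faces} produces a parallel properly embedded two dimensional simplex $S' = \relint {\rm ConvHull}_\Omega(z, v_2, v_3) \subset \Cc$ whose vertex at $z$ has incident edges $(z,v_2)$ and $(z,v_3)$ lying in $\partial\Omega$. I then form a tripod with apex $z$ by choosing $y_1 \in (z,v_2)$, $y_2 \in (z,v_3)$, and $y_3 \in \relint(\ell)$. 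By construction $[z,y_1] \cup [z,y_2] \cup [z,y_3] \subset \partial\Omega$, and the opposite edge $(y_1,y_2)$ crosses the interior of $S'$, so $(y_1,y_2) \subset \Omega$. If also $(y_1,y_3), (y_2,y_3) \subset \Omega$, then Proposition~\ref{prop:finding_two_d_faces} applied to the distinct points $z, y_1, y_2, y_3 \in \partiali\Cc$ gives some $w \in \partiali\Cc$ with $\cdim_{F_\Omega(w)}(\partiali\Cc \cap F_\Omega(w)) \geq 2$, contradicting (1).

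The main obstacle is therefore to show that the two remaining edges $(y_1,y_3)$ and $(y_2,y_3)$ meet $\Omega$ rather than lying in $\partial\Omega$, and it is precisely here that the case hypothesis $\relint(\ell) \subset F$ with $F \notin \{F_\Omega(x) : x \in \partial S\}$ enters: since the edge face $F_\Omega(y_1)$ of $S'$ agrees, by parallelism, with an edge face of $S$ while $F$ does not, we have $y_3 \notin F_\Omega(y_1)$. I expect the cleanest way to convert this into $(y_1,y_3) \subset \Omega$ is a dichotomy mirroring the convex co-compact argument of Lemma~\ref{lem:lines_intersecting}: if instead $(y_1,y_3) \subset \partial\Omega$, then the convex hull of the edge of $S$ with face $F_\Omega(y_1)$ and the segment $\ell$ is a two dimensional convex subset of $\partiali\Cc$ contained in $\partial\Omega$, and feeding a suitable triple of its points into Proposition~\ref{prop:finding_two_d_faces} (or directly into Observation~\ref{obs:lower_bd_from_three_different_faces}) once more produces a face of coarse dimension at least two. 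Either branch contradicts (1). The delicate points I anticipate are the endpoint/limiting step of the first case and the non-degeneracy bookkeeping (distinctness of the four tripod points and of the faces $F_\Omega(\hat y_i)$) required to legitimately invoke Proposition~\ref{prop:finding_two_d_faces}.
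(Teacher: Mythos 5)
Your skeleton (dichotomy on which face contains $\relint(\ell)$, sliding $S$ via Lemma~\ref{lem:slide_along_faces} to put a vertex at the intersection point, and a tripod fed into Proposition~\ref{prop:finding_two_d_faces}) mirrors the paper's proof, but the degenerate cases --- exactly the steps you defer or call ``quicker'' --- contain genuine errors, and in the paper these cases carry most of the content. The central problem is that you invoke Proposition~\ref{prop:finding_two_d_faces} in two situations where its hypotheses cannot hold. In the sub-case of your Case B where $z$ lies in an edge face, the ``direct application of Proposition~\ref{prop:finding_two_d_faces} to $\ell$ together with the edge'' fails: any tripod built from $\ell$ and that edge must use both vertices $v_1,v_2$ of the edge as two of the three non-apex points, and then the opposite side $(v_1,v_2)$ is the open edge of $S$, which lies in $\partial\Omega$, violating the requirement that all three sides of the triangle lie in $\Omega$. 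Likewise, in the branch where $(y_1,y_3)\subset\partial\Omega$, the convex hull $H$ of the edge and $\ell$ is a convex subset of $\partial\Omega$, so every open segment between points of $H$ lies in $\partial\Omega$ and no triple of its points can ever satisfy the hypotheses of Proposition~\ref{prop:finding_two_d_faces}. Only your parenthetical alternative can work, and it needs real work you omit: writing $F^*$ for the single face containing $\relint(H)$, one must check which of $v_1, v_2, y_3$ actually lie in $\partial F^*$ and are ideal points of $\overline{\Cc}\cap F^*$; in the sub-case $F^* = F_\Omega(y_3)$ the point $y_3$ is \emph{not} an ideal point of $F^*$, and one must instead apply Observation~\ref{obs:lower_bd_from_bd} using the edge $e\subset\partial F^*$, which gives $\cdim_{F^*}\left(\partiali\Cc\cap F^*\right)\geq 1+\cdim_{F_\Omega(e)}(e)=2$. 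These coarse-dimension arguments, not Proposition~\ref{prop:finding_two_d_faces}, are what the paper's Cases 1(a) and 1(b) consist of.

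The endpoint step of your Case A is also not established. The claim that $\relint(\ell)$ stays within bounded Hilbert distance of $F_S(x_0)$ inside $F$ is asserted without justification --- it is true under hypothesis (1), but proving it is itself a nontrivial argument --- and even granting it, the inference ``the endpoints fall in the closures of the adjacent vertex/edge faces, hence in $F_\Omega(\partial S)$'' is a non sequitur: $F_\Omega(\partial S)$ is a union of \emph{open} faces, and a point of $\overline{F_\Omega(v_i)}$ need not lie in $F_\Omega(x')$ for any $x'\in\partial S$. The correct argument (the paper's Case 1(b)) is again by coarse dimension: if an endpoint $x\in\partial F$ satisfies $x\notin F_\Omega(\partial S)$ and $F$ is an edge face with edge vertices $v_1,v_2$, then $v_1, v_2, x$ are ideal points of $\overline{\Cc}\cap F$ relative to $F$ with pairwise distinct faces, so Observation~\ref{obs:lower_bd_from_three_different_faces} gives $\cdim_F\left(\partiali\Cc\cap F\right)\geq 2$, contradicting hypothesis (1); when $F$ is a vertex face, Lemma~\ref{lem:faces_of_vertices} already forces the endpoints into $F$, since $\partiali\Cc\cap F$ has bounded diameter while a segment accumulating on $\partial F$ does not. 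In short: the architecture is right and parallels the paper, but every place where your sketch meets a configuration lying entirely in $\partial\Omega$, the tool must be Observation~\ref{obs:lower_bd_from_bd} or Observation~\ref{obs:lower_bd_from_three_different_faces} applied inside an appropriate face, and as written those steps would fail.
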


\begin{proof} It is enough to consider the case where $\ell=[x,y] \subset \partiali \Cc$ and $S$ is a properly embedded two dimensional simplex
$S \subset \Cc$ with $x \in F_\Omega(\partial S)$. Using Observation~\ref{obs:faces} we may assume that $x \in \partial S$. Indeed, by definition there exists $x_0 \in \partial S$ such that $x \in F_{\Omega}(x_0)$. Then the projective line segment $\ell_0:=[x_0,y] \subset \partiali \Cc$ also satisfies our assumptions and Observation~\ref{obs:faces} implies that $\ell \subset F_\Omega(\ell_0)$. Hence, without loss of generality, we will make the simplifying assumption that $x \in \partial S$. 

Now suppose, for a contradiction, that $\ell$ is not contained in $F_\Omega(\partial S)$. Since $\ell$ is not contained in $F_\Omega(\partial S)$ we must have $y \notin F_\Omega(\partial S)$.

Recall that by hypothesis, $\cdim_{F_{\Omega}(x')}(\partiali \Cc \cap F_{\Omega}(x')) \leq 1$ for any $x' \in \partiali \Cc$. Our proof will be a case-by-case analysis where we arrive at a contradiction in each case by finding a point in $\partiali \Cc$ where the above hypothesis on coarse dimension fails. Since $x \in \partial S$, there are two cases to consider based on whether $x$ is a vertex of $S$ or $x$ is contained in an edge of $S$.

\medskip 

\noindent \fbox{Case 1:} Assume $x$ is contained in an edge of $S$. 

\medskip

In this case, fix some $m \in (x,y)$. Then $m \in \partiali \Cc$ and there are two sub-cases to consider depending on whether $x\in \partial F_{\Omega}(m)$ or $x \in F_{\Omega}(m)$.

\medskip
\noindent \fbox{Case 1 (a):} Assume $x \in \partial F_\Omega(m)$.  In this case, we will arrive at a contradiction by showing that the coarse dimension of $\partiali\Cc \cap F_{\Omega}(m)$ is at least $2$.

To this end, we first apply \cref{obs:lower_bd_from_bd} to the properly convex domain $F_{\Omega}(m)$ in $\Pb(\Rb^{d'})$, where $d':=\dim F_{\Omega}(m)$, and the non-empty convex subset $\partiali\Cc \cap ~F_{\Omega}(m) \subset F_{\Omega}(m)$. Note that in this case, $\partiali ( \partiali\Cc \cap F_{\Omega}(m))=\partiali \Cc \cap ~\partial F_{\Omega}(m)$. Thus \cref{obs:lower_bd_from_bd} yields,
\begin{align}
\label{eq:c-dim-ineq-bdry-face-of-m}
\cdim_{F_\Omega(m)}&(\partiali \Cc \cap F_\Omega(m)) \\ &\geq 1+ \cdim_{F_{F_{\Omega}(m)}(x)} \left( \partiali\Cc \cap ~\partial F_{\Omega}(m) \cap F_{F_{\Omega}(m)}(x) \right).\nonumber
\end{align}

We now claim that $$F_S(x) \subset \partiali \Cc \cap ~\partial F_{\Omega}(m) \cap F_{F_{\Omega}(m)}(x).$$
To prove the claim, first observe that $F_{F_{\Omega}(m)}(x)=F_{\Omega}(x)$. Then the only non-trivial part in the claim is to show that $F_S(x) \subset \partial F_{\Omega}(m)$. Indeed, since $x \in \partial F_{\Omega}(m)$, \cref{obs:faces} part (3) implies that $F_{\Omega}(x) \subset \partial F_{\Omega}(m)$. Since $\Sc \subset \Omega$ is properly embedded, $F_S(x) \subset F_{\Omega}(x)$ and thus $F_S(x) \subset \partial F_{\Omega}(m)$. 

Then the above claim and the inequality in \eqref{eq:c-dim-ineq-bdry-face-of-m} imply that 
\begin{align*}
\cdim_{F_\Omega(m)}(\partiali \Cc \cap F_\Omega(m)) &\geq 1+ \cdim_{F_{F_{\Omega}(m)}(x)} \left( F_S(x)\right)\\
&=1+ \cdim_{F_{\Omega}(x)} \left( F_S(x)\right).
\end{align*}
By Observation \ref{obs:qdim_of_faces_PES}, $\cdim_{F_{\Omega}(x)}(F_S(x))=\dim(F_S(x))=1.$ Thus
\begin{align*}
\cdim_{F_\Omega(m)}(\partiali \Cc \cap F_\Omega(m)) \geq 2
\end{align*}
and we have a contradiction.


\medskip 

\noindent \fbox{Case 1 (b):} Assume $x \in F_\Omega(m)$ or equivalently $m \in F_\Omega(x)$.  In this case again, we will arrive at a contradiction by showing that the coarse dimension of $\partiali\Cc \cap F_{\Omega}(m)$ is at least $2$.

Recall that $x \in S$ and $y \notin F_\Omega(\partial S)$. Since $(x,y) \subset \partial \Omega$, then we must have $y \in \partial F_\Omega(x)$. Let $v_1, v_2 \in \partial S$ be the vertices of the edge containing $x$. Then by Observation~\ref{obs:faces_of_simplices_are_properly_embedded} 
\begin{align*}
v_1, v_2 \in \partiali \Cc \cap \partial F_\Omega(m)
\end{align*}
and $F_{\Omega}(v_1)$ and $F_{\Omega}(v_2)$ are distinct.
Further, since $y \notin F_\Omega(\partial S)$ the faces $F_\Omega(v_1)$, $F_\Omega(v_2)$, $F_\Omega(y)$ are all distinct. 

Finally, we will apply \cref{obs:lower_bd_from_three_different_faces} to the properly convex domain $F_{\Omega}(m)$ in $\Pb(\Rb^{d'})$, where $d':=\dim F_{\Omega}(m)$, and the non-empty convex subset $\partiali\Cc \cap ~F_{\Omega}(m) \subset F_{\Omega}(m)$. The three points in $F_{\Omega}(m)$ that we consider are $v_1, v_2,$ and $y$.  Since $F_{F_{\Omega}(m)}(\cdot)=F_{\Omega}(\cdot)$ for any point in $\overline{F_{\Omega}(m)}$, the faces in $F_{\Omega}(m)$ of the three points $v_1,v_2,$ and $y$ are pairwise distinct. Note that in this case, $\partiali ( \partiali\Cc \cap F_{\Omega}(m))=\partiali \Cc \cap ~\partial F_{\Omega}(m)$. Thus, by Observation~\ref{obs:lower_bd_from_three_different_faces},  we have 
\begin{align*}
\cdim_{F_\Omega(m)} (\partiali \Cc \cap F_\Omega(m)) \geq 2
\end{align*}
and hence a contradiction.


\medskip 

\noindent \fbox{Case 2:} Assume $x$ is a vertex of $S$.  
In this case, we will arrive at a contradiction by finding a point $w \in \partiali \Cc$ for which the coarse dimension of  $\partiali \Cc \cap F_{\Omega}(w)$ is at least 2. In particular, we will use Proposition~\ref{prop:finding_two_d_faces} to find such a point $w$.

Let $y_1, y_2 \in \partial S$ be points on the edges adjacent to $x$. Then $(y_1,y_2) \subset \Omega$. Then 
\begin{align*}
[x,y_1] \cup [x,y_2] \cup [x,y] \subset \partiali\Cc
\end{align*}
and $(y_1, y_2) \subset \Omega$. We claim that $(y_1, y) \subset \Omega$. If not, then we could apply Case 1 to the line segment $\ell^\prime := [y_1, y]$ and obtain a contradiction. So we must have $(y_1, y) \subset \Omega$. By symmetry we also have $(y_2,y) \subset \Omega$. But then by Proposition~\ref{prop:finding_two_d_faces}  there exists $w \in \partiali\Cc$ with 
\begin{align*}
\cdim_{F_\Omega(w)} (\partiali \Cc \cap F_\Omega(w)) \geq 2.
\end{align*}
So we have a contradiction. 
\end{proof}

\begin{lemma}\label{lem:bd_faces_of_triangles_intersecting} If $S_1, S_2 \subset \Cc$ are properly embedded two dimensional simplices and $F_\Omega(\partial S_1) \cap F_\Omega(\partial S_2) \neq \emptyset$, then $\Phi(S_1) = \Phi(S_2)$. \end{lemma}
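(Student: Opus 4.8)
The plan is to reduce the statement to a comparison of the vertex faces of $S_1$ and $S_2$, and then to propagate a single common face across the two simplices using Lemma~\ref{lem:lines_intersecting_faces}. First I would observe that $\Phi(S)$ depends only on the unordered triple of vertex faces $\{F_\Omega(v_1),F_\Omega(v_2),F_\Omega(v_3)\}$ of $S$: each vertex $w_j$ of $\Phi(S)$ equals ${\rm CoM}_{F_\Omega(v_j)}(\partiali\Cc \cap F_\Omega(v_j))$, a point depending only on the face $F_\Omega(v_j)$ and lying in it, so $F_\Omega(w_j)=F_\Omega(v_j)$ and distinct vertex faces give distinct vertices. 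Thus it suffices to prove that $S_1$ and $S_2$ have the same set of vertex faces.

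Next I would use the hypothesis to show $\partial S_1 \subset F_\Omega(\partial S_2)$ (and, symmetrically, $\partial S_2 \subset F_\Omega(\partial S_1)$). Pick $z \in F_\Omega(\partial S_1) \cap F_\Omega(\partial S_2)$; then $F_\Omega(z)=F_\Omega(x_1)=F_\Omega(x_2)$ for some $x_i \in \partial S_i$, so $x_1 \in F_\Omega(\partial S_2)$. Since every closed edge of $S_1$ lies in $\partiali\Cc$ and an edge of $S_1$ containing $x_1$ meets $F_\Omega(\partial S_2)$ at $x_1$, Lemma~\ref{lem:lines_intersecting_faces} places that edge inside $F_\Omega(\partial S_2)$; in particular its endpoints, which are vertices of $S_1$, lie in $F_\Omega(\partial S_2)$. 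As the edge--vertex graph of a triangle is connected, applying Lemma~\ref{lem:lines_intersecting_faces} to the edges emanating from vertices already known to lie in $F_\Omega(\partial S_2)$ shows $\partial S_1 \subset F_\Omega(\partial S_2)$, and applying $F_\Omega$ gives $F_\Omega(\partial S_1) \subset F_\Omega(\partial S_2)$ (with equality by symmetry).

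The main step, and the only place where condition (1) enters, is to upgrade ``$\partial S_1 \subset F_\Omega(\partial S_2)$'' to a matching of vertex faces: if $v$ is a vertex of $S_1$, then $F_\Omega(v)=F_\Omega(u)$ for some vertex $u$ of $S_2$. I would argue by contradiction. From $v \in F_\Omega(\partial S_2)$ we get $F_\Omega(v)=F_\Omega(y)$ for some $y \in \partial S_2$; if $y$ lay in the relative interior of an edge $(u_1,u_2)$ of $S_2$, then $(u_1,u_2) \subset F_\Omega(v)$. Let $(v,v')$ be an edge of $S_1$ with edge face $G:=F_\Omega(m')$ for $m' \in (v,v')$. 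By Observation~\ref{obs:faces_of_simplices_are_properly_embedded} the vertex face $F_\Omega(v)$ and the edge face $G$ are distinct faces, so $v \in \partial G$ and $F_\Omega(v) \subset \partial G$. Using Observation~\ref{obs:faces}(4) with the triple $v,v',m'$, each $u \in (u_1,u_2) \subset F_\Omega(v)$ yields a segment $(u,v') \subset \partiali\Cc \cap G$ accumulating at $u \in \partial G$; hence $(u_1,u_2)$ lies in the ideal boundary of the convex set $\partiali\Cc \cap G$ relative to $G$. Feeding this into Observation~\ref{obs:lower_bd_from_bd}, and using $F_G(u)=F_\Omega(v)$ together with $\cdim_{F_\Omega(v)}\big((u_1,u_2)\big)=1$ from Observation~\ref{obs:qdim_of_faces_PES}, gives
\[
\cdim_{F_\Omega(m')}\big(\partiali\Cc \cap F_\Omega(m')\big) \geq 1 + 1 = 2,
\]
which contradicts hypothesis (1) because $m' \in \partiali\Cc$. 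Therefore $y$ must be a vertex of $S_2$. Running this over the three pairwise distinct vertex faces of $S_1$ and invoking symmetry produces a bijection between the vertex faces of $S_1$ and those of $S_2$, and hence $\Phi(S_1)=\Phi(S_2)$.

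I expect the delicate point to be the verification in the third paragraph that $(u_1,u_2)$ genuinely occupies the ideal boundary of $\partiali\Cc \cap G$ inside the subdomain $G$, so that Observation~\ref{obs:lower_bd_from_bd} may be applied with the correct inner face $F_\Omega(v)$; the remaining steps are routine face bookkeeping together with Lemma~\ref{lem:lines_intersecting_faces}.
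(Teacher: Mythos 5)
Your proof is correct, and its skeleton matches the paper's: both arguments first use Lemma~\ref{lem:lines_intersecting_faces} to propagate the single intersection point and obtain $F_\Omega(\partial S_1)=F_\Omega(\partial S_2)$, then pair each vertex $v$ of $S_1$ with a point $y \in \partial S_2$ satisfying $F_\Omega(v)=F_\Omega(y)$, then show $y$ must be a \emph{vertex} of $S_2$, and finally conclude because $\Phi$ depends only on the set of vertex faces. The difference lies in the third step, and there your route is genuinely different. The paper handles it in one line by citing Lemma~\ref{lem:faces_of_vertices} together with Observation~\ref{obs:faces_of_simplices_are_properly_embedded}: if $y$ were interior to an edge of $S_2$, that open edge would be a properly embedded one-dimensional simplex inside $F_\Omega(y)=F_\Omega(v)$, forcing $\diam_{F_\Omega(v)}\left(\partiali\Cc \cap F_\Omega(v)\right)=\infty$ and contradicting the uniform bound $R$ that Lemma~\ref{lem:faces_of_vertices} guarantees at vertex faces. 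You instead re-prove this step locally: you put $(u_1,u_2)$ inside $\partial G$ for an adjacent edge face $G=F_\Omega(m')$ of $S_1$, use Observation~\ref{obs:faces} part (4) to produce the segments $(u,v')\subset \partiali\Cc\cap G$ exhibiting $(u_1,u_2)$ as part of the ideal boundary of the convex set $\partiali\Cc\cap G$, and then combine Observations~\ref{obs:lower_bd_from_bd} and~\ref{obs:qdim_of_faces_PES} (plus the face-of-a-face identity $F_G(u)=F_\Omega(v)$) to get $\cdim_{F_\Omega(m')}\left(\partiali\Cc\cap F_\Omega(m')\right)\geq 2$, contradicting hypothesis (1) at $m'\in\partiali\Cc$. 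I checked the delicate point you flag, and it goes through; your computation closely parallels Case 1(a) in the paper's proof of Lemma~\ref{lem:lines_intersecting_faces}, so it is consistent with the paper's own toolkit. As for what each route buys: the paper's citation is shorter and reuses the quantitative constant $R$, which is needed anyway to define $\Phi$ and to prove Lemma~\ref{lem:bd_distance_parallel_simplices}; your argument avoids Lemma~\ref{lem:faces_of_vertices} entirely at this step, and in particular makes no use of the co-compactness of the $\Lambda$-action (which drives that lemma's compactness proof), showing that the vertex-matching is a purely local consequence of the coarse-dimension hypothesis.
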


\begin{proof} Lemma~\ref{lem:lines_intersecting_faces} implies that $F_\Omega(\partial S_1) = F_\Omega(\partial S_2)$. Suppose $v_1,v_2,v_3 \in \partial S_1$ are the vertices of $S_1$. Then there exist $w_1,w_2,w_3 \in \partial S_2$ such that $F_\Omega(v_j) = F_\Omega(w_j)$. Then Lemma~\ref{lem:faces_of_vertices} and Observation~\ref{obs:faces_of_simplices_are_properly_embedded} implies that $w_1,w_2,w_3$ are the vertices of $S_2$. So by definition $\Phi(S_1)=\Phi(S_2)$. 
  \end{proof}

\begin{lemma}\label{lem: isolated in final argument}  $\Sc$ is isolated, that is $\Sc$ is closed and  discrete in the local Hausdorff convergence topology. \end{lemma}

\begin{proof}

By Proposition~\ref{prop:PES_closed} the collection $\Sc_0$ of all properly embedded two dimensional simplices in $\Cc$ is closed in the local Hausdorff convergence topology. So to show that $\Sc$ is closed and  discrete in the local Hausdorff convergence topology, it is enough to fix a sequence $(S_n)_{n \geq 1}$ in $\Sc$ such that $S_n$ converges in the local Hausdorff convergence topology to a properly embedded two dimensional simplex $S$ and then show that $S_n=S$ for $n$ sufficiently large. 

Suppose not, then by passing to a subsequence we can suppose that $S_n \neq S$ for all $n$. Fix $p_0 \in S$. Then for $n \geq 0$ let 
\begin{align*}
R_n: = \sup\left\{ r \geq 0 : S \cap \Bc_\Omega(p_0; r) \subset \overline{\Nc_\Omega(S_n; R+1)} \right\}
\end{align*}
where $R>0$ is as in the statements of Lemma \ref{lem:faces_of_vertices} and Lemma  \ref{lem:bd_distance_parallel_simplices}. After passing to a subsequence, we can consider the following two cases. 

\medskip 

\noindent \fbox{Case 1:} Assume $R_n = \infty$ for all $n$. Then for any $n$, 
\begin{align*}
S\subset \overline{\Nc_\Omega(S_n; R+1)}
\end{align*}
and so by Proposition~\ref{prop:dist_est_and_faces} 
\begin{align*}
\partial S \subset F_\Omega(\partial S_n).
\end{align*}
Then Lemma~\ref{lem:bd_faces_of_triangles_intersecting} implies that $\Phi(S) = \Phi(S_n)=S_n$ for all $n$. Since $S_n \rightarrow S$, we then have $S=\Phi(S) = S_n$ for all $n$. So we have a contradiction. 

\medskip 

\noindent \fbox{Case 2:} Assume $R_n < \infty$ for all $n$. Since $S_n \rightarrow S$ in the local Hausdorff convergence topology, we see that $R_n \rightarrow \infty$  (see \cref{obs:convergence_in_loc_haus_top}). Then there exists a sequence $(q_n)_{n \geq 1}$ in $S$ such that 
\begin{enumerate}
\item $\lim_{n \rightarrow \infty} \dist_\Omega(q_n,p_0) = \infty$, 
\item $[q_n, p_0] \subset  \overline{\Nc_\Omega(S_n; R+1)}$, and 
\item $\dist_\Omega(q_n, S_n) =R+1$. 
\end{enumerate}

Next pick $\gamma_n \in \Gamma$ such that $\{ \gamma_n q_n : n \geq 0\}$ is relatively compact in $\Cc$. Then by passing to a subsequence we can suppose that $\gamma_n q_n \rightarrow q \in \Cc$ and $\gamma_n p_0 \rightarrow p \in \partiali \Cc$. Using Proposition~\ref{prop:PES_closed} and passing to another subsequence we can suppose that $\gamma_n S_n \rightarrow S^\prime$ and $\gamma_n S \rightarrow S^{\prime\prime}$ where $S^\prime$ and $S^{\prime\prime}$ are properly embedded two dimensional simplices in $\Cc$. Further, 
\begin{align*}
[q,p) \subset S^{\prime\prime} \cap \overline{\Nc_\Omega(S^\prime; R+1)}.
\end{align*}
Then Proposition~\ref{prop:dist_est_and_faces} implies that $p \in \partial S^{\prime\prime} \cap F_\Omega(S^\prime)$. So $\Phi(S^\prime) = \Phi(S^{\prime\prime})$ by Lemma~\ref{lem:bd_faces_of_triangles_intersecting}. However, by construction $q \in S^{\prime\prime}$ and $\dist_\Omega(q,S^\prime) = R+1$. So we have a contradiction with Lemma~\ref{lem:bd_distance_parallel_simplices}.
\end{proof}

Thus $\Sc$ is isolated, coarsely complete, and $\Gamma$-invariant by Lemmas~\ref{lem: S is gamma invariant in final argument},~\ref{lem:bd_distance_parallel_simplices}, and~\ref{lem: isolated in final argument}. Then Theorem~\ref{thm:main_char_ncc}  implies that $(\Cc, \dist_\Omega)$ is relatively hyperbolic with respect to a family $\Sc_{\diamond}$ of properly embedded simplices in $\Cc$ of dimension at least two. Note that $\Sc_{\diamond}$ isn't necessarily $\Sc$; see the discussion following \cref{thm:main_char_ncc}.

Since $\cdim_{F_\Omega(x)} \left(\partiali\Cc \cap F_\Omega(x)\right) \leq 1$ for all $x \in \partiali\Cc$, Observation~\ref{obs:qdim_of_faces_PES} implies that $\Cc$ does not contain any properly embedded simplices with dimension three or more. So each simplex in $\Sc_{\diamond}$ is two-dimensional.  Thus $(\Cc, \dist_\Omega)$ is relatively hyperbolic with respect to a collection of two dimensional properly embedded simplices. This completes the proof of this direction.

\appendix

\section{Proof of Observation~\ref{obs:faces}} 
\label{sec:proof_of_obs_about_faces}

At the request of one of the referees, we include a proof of Observation~\ref{obs:faces} which we restate here. 

\begin{observation}
Suppose $\Omega \subset \Pb(\Rb^d)$ is a properly convex domain. 
\begin{enumerate}
\item $F_\Omega(x)$ is convex and open in its span,
\item $y \in F_\Omega(x)$ if and only if $x \in F_\Omega(y)$ if and only if $F_\Omega(x) = F_\Omega(y)$,
\item if $y \in \partial F_\Omega(x)$, then $F_\Omega(y) \subset \partial F_\Omega(x)$,
\item if $x, y \in \overline{\Omega}$, $z \in (x,y)$, $p \in F_{\Omega}(x)$, and $q \in F_{\Omega}(y)$, then 
\begin{align*}
(p,q) \subset F_\Omega(z).
\end{align*}
In particular, $(p,q) \subset \Omega$ if and only if $(x,y) \subset \Omega$.
\end{enumerate}
\end{observation}

For the rest of the section, fix a properly convex domain $\Omega \subset \Pb(\Rb^d)$. 

\begin{lemma} If $x \in \overline{\Omega}$ and $y \in F_\Omega(x)$, then $F_\Omega(x) = F_\Omega(y)$. \end{lemma} 

\begin{proof} We start by showing that $F_\Omega(x) \subset F_\Omega(y)$. To that end, fix $z \in F_\Omega(x)$ and let $V : = \Spanset\{x,y,z\}$. If $\dim V \leq 2$, then it is clear that $z \in F_\Omega(y)$. So suppose that $\dim V = 3$. Then we can fix coordinates on $V$ so that 
$$
x=[1:0:0], \quad y = [1:1:0], \quad z = [1:0:1]. 
$$
Since $y,z \in F_\Omega(x)$, there exists $\epsilon > 0$ such that 
$$
 [1:-\epsilon:0],  [1:1+\epsilon:0],  [1:0:-\epsilon],  [1:0:1+\epsilon] \in \overline{\Omega}.
 $$
 Since the convex hull of these points are in $\overline{\Omega}$, we see that $z \in F_\Omega(y)$. Hence $F_\Omega(x) \subset F_\Omega(y)$.
 
Then $x \in F_\Omega(x) \subset F_\Omega(y)$ and so the above argument implies that $F_\Omega(y) \subset F_\Omega(x)$. 

\end{proof} 

\begin{proof}[Proof of (1):] We first show that $F_\Omega(x)$ is convex. Fix $y,z \in F_\Omega(x)$. Then by the lemma, $z \in F_\Omega(y)$ and so $[y,z] \subset F_\Omega(y) = F_\Omega(x)$. So $F_\Omega(x)$ is convex. Then by definition $F_\Omega(x)$ is open in its span. 
\end{proof} 

\begin{proof}[Proof of (2):] This follows immediately from the lemma. 
\end{proof} 

\begin{proof}[Proof of (3):] Since $F_\Omega(y) \cap F_\Omega(x) = \emptyset$, it suffices to show that $F_\Omega(y) \subset \overline{F_\Omega(x)}$. To that end, fix $z \in F_\Omega(y)$ and let $V : = \Spanset\{x,y,z\}$. If $\dim V \leq 2$, then $z=y \in \overline{F_\Omega(x)}$. So suppose that $\dim V = 3$. Then we can fix coordinates on $V$ so that 
$$
x=[1:0:0], \quad y = [1:1:0], \quad z = [1:1:1]. 
$$
Since $F_\Omega(x)$ is open in its span and $z \in F_\Omega(y)$, there exists $\epsilon > 0$ such that 
$$
 [1:-\epsilon:0], [1:1:-\epsilon],  [1:1:1+\epsilon] \in \overline{\Omega}.
 $$
  Since the convex hull of these points are in $\overline{\Omega}$, we see that $z \in \overline{F_\Omega(x)}$. Hence $F_\Omega(y) \subset \overline{F_\Omega(x)}$. 
\end{proof} 

\begin{proof}[Proof of (4):] By symmetry it suffices to consider the following cases. 

\medskip 

\noindent \fbox{Case 1:} Assume $F_\Omega(x) = F_\Omega(y)$. In this case, $(p,q) \subset F_\Omega(x)$ and $z \in F_\Omega(x)$. So by part (2),  $(p,q) \subset F_\Omega(x) = F_\Omega(z).$

Then, for the rest of the cases, we may assume that $F_{\Omega}(x) \cap F_{\Omega}(y)=\emptyset$.

\noindent \fbox{Case 2:} Assume $x=p$ and $y = q$. Then $z \in (x,y) = (p,q)$ and so $(p,q) \subset F_\Omega(z)$. 

\medskip 

\noindent \fbox{Case 3:} Assume $x=p$ and $y \neq q$. In this case, fix an open line segment $\ell \subset \overline{\Omega}$ with $y,q \in \ell$. Then the convex hull of $\{x\} \cup \overline{\ell}$ in $\overline{\Omega}$ is a two-dimensional simplex whose relative interior contains $(p,q)$ and $z$. Hence $(p,q) \subset F_\Omega(z)$. 

\medskip

\noindent \fbox{Case 4:} Assume $x \neq p$ and $y \neq q$. In this case, fix open line segments $\ell_1,\ell_2 \subset \overline{\Omega}$ with $x,p \in \ell_1$ and $y,q \in \ell_2$. Then the convex hull of $\overline{\ell}_1 \cup \overline{\ell}_2$ in $\overline{\Omega}$ is either a two-dimensional 4-gon or a three-dimensional simplex. In either case, the relative interior of this convex hull contains $(p,q)$ and $z$. Hence $(p,q) \subset F_\Omega(z)$. 
\end{proof} 

\begin{figure}[h]
\centering
\includegraphics[scale=0.4]{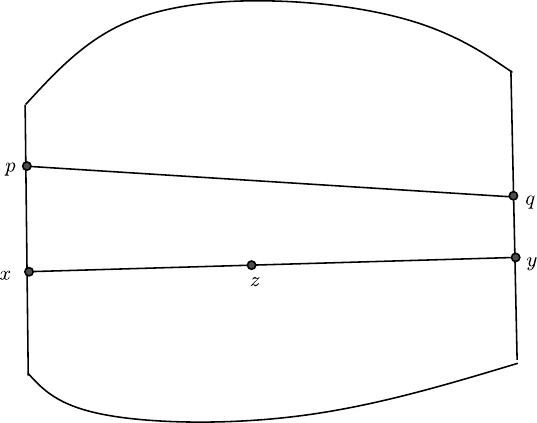}
\caption{Figure for the proof of part (4) Case 4, when  the convex hull of $\overline{\ell}_1 \cup \overline{\ell}_2$ is a two-dimensional 4-gon.}
\label{fig:faces}
\end{figure}

\bibliographystyle{alpha}
\bibliography{geom}

\end{document}